\newtheorem{thm}{Theorem}[section]
\newtheorem{cor}[thm]{Corollary}
\newtheorem{lem}[thm]{Lemma}
\newtheorem{ex}[thm]{Example}
\newtheorem{prop}[thm]{Proposition}
\newtheorem{defn}[thm]{Definition}
\newtheorem{rem}[thm]{Observation}
\title{\vspace{-15mm}\fontsize{24pt}{10pt}\selectfont\textbf{Universal enveloping algebras for Malcev color algebras}} 
\author{
\large
\textsc{Daniel de la Concepci\'on}\thanks{FPU12 Grant from the Ministerio de Educaci\'on, Cultura y Deporte in Spain; and associated to the research project MTM2013-45588-C3-3-P}\\[2mm] 
\normalsize Universidad de la Rioja \\
Departamento de Matem\'aticas y Computaci\'on\\ 
\vspace{-5mm}
}
\date{}
\begin{document}

\maketitle 
\abstract{In this paper we give a construction of the universal enveloping algebra of a Malcev algebra
in categories of group algebra comodules with a symmetry given by a bicharacter of the group. A particular example of such categories is the category of super vector spaces.}

\section{Motivation}

There is a very well known relation between connected Hopf algebras and Lie algebras; as it is shown in \cite{Mil65}. In fact this categories are equivalent in the case of having a field of characteristic zero. This relation has been extended to connected cocommutative and coassociative bialgebras and Sabinin algebras in \cite{UnivSab}.

It has been shown that this relation between connected Hopf algebras and Lie algebras is deeper, as it stands when moving to categories other than $\mathbf{Vec}$. Specifically, the monoidal categories that have a forgetful functor to $\mathbf{Vec}$ and a braiding. A survey can be read in \cite{Shes12}, where the most common examples are explained. This relations makes us wonder if this results can be extended to wider subclasses of Sabinin algebras in this types of categories.

 One easy and general subclass of this categories mentioned in the previous paragraph, are the categories of group comodules with a symmetry given by a group bicharacter. Results about this structures in relation with Lie algebras can be read in \cite{SchColLie}, \cite{Ritt79} and \cite{Bah92}, originally.

In this paper we generalize this relation with the easiest examples: Malcev algebras on $\mathbb{K}[G]$-comodule categories; using already known category theoretic tools.

\section{Category theory}

In this first section, we recall some category theory and define the categorical objects that will appear through the paper.

Also, everywhere in this papaer, unless otherwise stated, $\mathbb{K}$ is a field of characteristic zero and any group $G$ is abelian.

\subsection{Categorical structures}

Firstly, we define some categorical structures through commutative diagrams; that in the next sections will be used.

\begin{defn}
A category $\mathcal{C}$ is called monoidal if it is equiped with:
\begin{itemize}
\item A bi-functor $\otimes:\mathcal{C}\times\mathcal{C}\to\mathcal{C}$ called tensor product.
\item An object $\mathbb{I}$ called the unit element.
\item A natural isomorphism $\alpha_{A,B,C}:(A\otimes B)\otimes C\to A\otimes(B\otimes C)$ called associator.
\item Two natural isomorphisms $\lambda_A:\mathbb{I}\otimes A\to A$ and $\rho_A:A\otimes\mathbb{I}\to A$, respectively called left and right unitor.
\end{itemize}
such that the diagrams \ref{fig:pentagon} and \ref{fig:triangle} are commutative.
\end{defn}

The first examples of monoidal categories are those with finite products or coproducts:

\begin{ex}\label{prodtensor}
Let $\mathcal{C}$ be a category with finite products. Then $(\mathcal{C},\times,T,\alpha,\rho,\lambda)$ with $T$ as the terminal object of the category and $\times$ as the categorical product is a monoidal category given the following:
\begin{itemize}
\item $\alpha_{A,B,C}$ as the morphisms obtained by recombining the projections from $(A\times B)\times C$ to match $A\times (B\times C)$.
\item $T$ is the unit element.
\item $\lambda_A:A\times T\to A$ and $\rho_A:T\times A\to A$ as the projections.
\end{itemize}
\end{ex}

\begin{ex}
Let $\mathcal{C}$ be a category with finite coproducts. Then $(\mathcal{C},\oplus,\bot,\alpha,\rho,\lambda)$ with $\bot$ as the initial object of the category and $\oplus$ as the categorical coproduct is a monoidal category given the following:
\begin{itemize}
\item $\alpha_{A,B,C}$ as the morphisms obtained by recombining the injections to $(A\oplus B)\oplus C$ to match $A\oplus (B\oplus C)$.
\item $\bot$ as the unit element.
\item $\lambda_A$ and $\rho_A$ as the morphisms obtained by the coproduct from the identity on $A$ and the only morphism $f_A:\bot\to A$.
\end{itemize}
\end{ex}

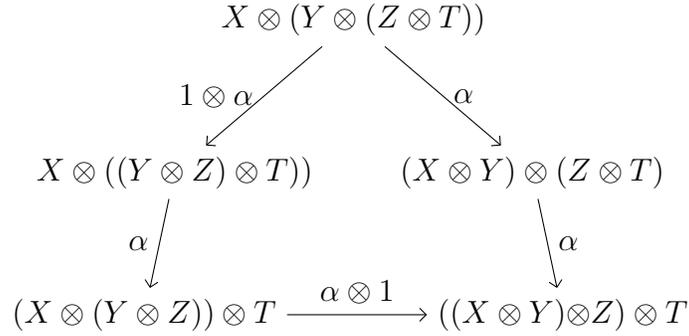
\begin{figure}
\centering
\begin{tikzpicture}
\node (P0) at (90:2.8cm) {$X\otimes (Y\otimes (Z\otimes T))$};
\node (P1) at (90+72:2.5cm) {$X\otimes ((Y\otimes Z)\otimes T))$} ;
\node (P2) at (90+72+40:3cm) {${(X\otimes (Y\otimes Z))}\otimes T$};
\node (P3) at (90+4*72-40:3cm) {$((X\otimes Y){\otimes Z)\otimes T}$};
\node (P4) at (90+4*72:2.5cm) {$(X\otimes Y)\otimes (Z\otimes T)$};
\draw
(P0) edge[->,>=angle 90] node[left] {$1\otimes\alpha$} (P1)
(P1) edge[->,>=angle 90] node[left] {$\alpha$} (P2)
(P2) edge[->,>=angle 90] node[above] {$\alpha\otimes 1$} (P3)
(P4) edge[->,>=angle 90] node[right] {$\alpha$} (P3)
(P0) edge[->,>=angle 90] node[right] {$\alpha$} (P4);
\end{tikzpicture}
\caption{Pentagon diagram}
\label{fig:pentagon}
\end{figure}

\begin{figure}
\centering
\begin{tikzpicture}
\node (P0) at (30:2.5cm) {$X\otimes (\mathbb{I}\otimes Y)$};
\node (P1) at (150:2.5cm) {$(X\otimes \mathbb{I})\otimes Y$} ;
\node (P2) at (270:1cm) {$X\otimes Y$};
\draw
(P1) edge[->,>=angle 90] node[above] {$\alpha$} (P0)
(P1) edge[->,>=angle 90] node[left] {$\rho\otimes id$} (P2)
(P0) edge[->,>=angle 90] node[right] {$id\otimes \lambda$} (P2);
\end{tikzpicture}
			\caption{Triangle diagram}
	\label{fig:triangle}
\end{figure}
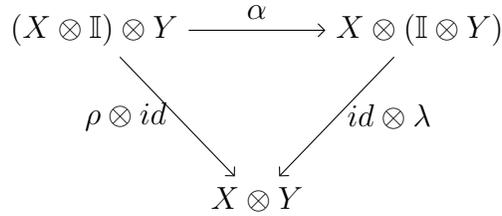

The next step is to consider isomorphisms between $A\otimes B$ and $B\otimes A$, this type of categories are those denoted as braided categories.

\begin{defn}
A natural isomorphism $c_{A,B}:A\otimes B\to B\otimes A$  for $(\mathcal{C},\otimes,\mathbb{I},\alpha,\rho,\lambda)$ is called a braiding in case the hexagon diagrams \ref{fig:hexagon1} and \ref{fig:hexagon2} commute. In this case $(\mathcal{C},\otimes,\mathbb{I},c,\alpha,\rho,\lambda)$ is called a braided monoidal category.

In case $c_{A,B}\circ c_{B,A}=id_{B\otimes A}$, the braiding is called symmetric and the resulting category is a symmetric monoidal category.

\begin{figure}
	\centering
\begin{tikzpicture}
\node (P0) at (40:2.5cm) {$(B\otimes C)\otimes A$};
\node (P1) at (140:2.5cm) {$A\otimes (B\otimes C)$} ;
\node (P2) at (180:2.5cm) {$(A\otimes B)\otimes C$};
\node (P3) at (220:2.5cm) {$(B\otimes A)\otimes C$};
\node (P4) at (320:2.5cm) {$B\otimes (A\otimes C)$};
\node (P5) at (360:2.5cm) {$B\otimes (C\otimes A)$};
\draw
(P1) edge[->,>=angle 90] node[above] {$c$} (P0)
(P0) edge[->,>=angle 90] node[right] {$\alpha$} (P5)
(P2) edge[->,>=angle 90] node[left] {$\alpha$} (P1)
(P3) edge[->,>=angle 90] node[below] {$\alpha$} (P4)
(P4) edge[->,>=angle 90] node[right] {$id\otimes c$} (P5)
(P2) edge[->,>=angle 90] node[left] {$c\otimes id$} (P3);
\end{tikzpicture}
	\caption{First hexagon diagram}
	\label{fig:hexagon1}
\end{figure}
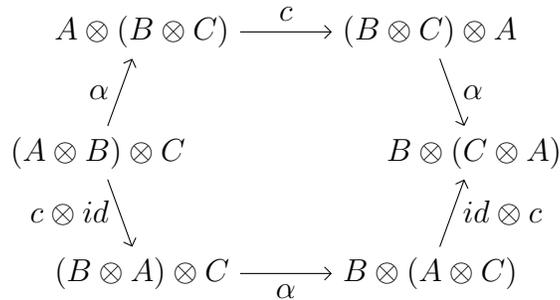

\begin{figure}
	\centering
\begin{tikzpicture}
\node (P0) at (40:2.5cm) {$C\otimes (A\otimes B)$};
\node (P1) at (140:2.5cm) {$(A\otimes B)\otimes C$} ;
\node (P2) at (180:2.5cm) {$A\otimes (B\otimes C)$};
\node (P3) at (220:2.5cm) {$A\otimes(C\otimes B)$};
\node (P5) at (360:2.5cm) {$(C\otimes A)\otimes B$};
\node (P4) at (320:2.5cm) {$(A\otimes C)\otimes B$};
\draw
(P1) edge[->,>=angle 90] node[above] {$c$} (P0)
(P0) edge[->,>=angle 90] node[right] {$\alpha$} (P5)
(P2) edge[->,>=angle 90] node[left] {$\alpha$} (P1)
(P3) edge[->,>=angle 90] node[below] {$\alpha$} (P4)
(P4) edge[->,>=angle 90] node[right] {$c\otimes id$} (P5)
(P2) edge[->,>=angle 90] node[left] {$id\otimes c$} (P3);
\end{tikzpicture}
	\caption{Second hexagon diagram}
	\label{fig:hexagon2}
\end{figure}

\end{defn}

It is obvious to wonder if there are a special type of functors between monoidal categories; i.e, functors that `preserve' the monoidal structure in some sense. 

\begin{defn}
A functor $F:(\mathcal{C},\bullet,\mathbb{I}_C)\to(\mathcal{D},\otimes,\mathbb{I}_D)$ between two monoidal categories is called monoidal if it is equiped with:
\begin{itemize}
\item $\phi:(FA)\otimes (FB)\to F(A\bullet B)$ a natural transformation.
\item $r:\mathbb{I}_D\to F\mathbb{I}_C$ another natural tranformation.
\end{itemize}
such that  the diagrams \ref{fig:monoidal functor1}, \ref{fig:monoidal functor2} and \ref{fig:monoidal functor3} commute.

In case there is a braiding, the functor $F$ is called braided if also the diagram \ref{fig:braided monoidal functor} commutes.
\end{defn}

\begin{figure}
	\centering
\begin{tikzpicture}
\node (P0) at (40:2.5cm) {$FA\otimes (FB\otimes FC)$};
\node (P1) at (140:2.5cm) {$(FA\otimes FB)\otimes FC$} ;
\node (P2) at (180:2.5cm) {$F(A\bullet B)\otimes FC$};
\node (P3) at (220:2.5cm) {$F((A\bullet B)\bullet C)$};
\node (P4) at (320:2.5cm) {$F(A\bullet (B\bullet C))$};
\node (P5) at (360:2.5cm) {$FA\otimes F(B\bullet C)$};
\draw
(P1) edge[->,>=angle 90] node[above] {$\alpha$} (P0)
(P0) edge[->,>=angle 90] node[right] {$id\otimes \phi$} (P5)
(P1) edge[->,>=angle 90] node[left] {$\phi\otimes id$} (P2)
(P3) edge[->,>=angle 90] node[below] {$F\alpha$} (P4)
(P5) edge[->,>=angle 90] node[right] {$\phi$} (P4)
(P2) edge[->,>=angle 90] node[left] {$\phi$} (P3);
\end{tikzpicture}
	\caption{First monoidal functor diagram}
	\label{fig:monoidal functor1}
\end{figure}

\begin{figure}
	\centering
\begin{tikzpicture}
\node (P0) at (45:2cm) {$F\mathbb{I}_\mathcal{C}\otimes FB$};
\node (P1) at (135:2cm) {$\mathbb{I}_\mathcal{D}\otimes FB$} ;
\node (P2) at (225:2cm) {$FB$};
\node (P3) at (315:2cm) {$F(\mathbb{I}_\mathcal{C}\bullet B)$};
\draw
(P1) edge[->,>=angle 90] node[above] {$r\otimes id$} (P0)
(P0) edge[->,>=angle 90] node[right] {$\phi$} (P3)
(P3) edge[->,>=angle 90] node[below] {$F\lambda$} (P2)
(P1) edge[->,>=angle 90] node[right] {$\lambda$} (P2);
\end{tikzpicture}
	\caption{Second monoidal functor diagram}
	\label{fig:monoidal functor2}
\end{figure}

\begin{figure}
	\centering
\begin{tikzpicture}
\node (P0) at (45:2cm) {$FA\otimes F\mathbb{I}_\mathcal{C}$};
\node (P1) at (135:2cm) {$FA\otimes \mathbb{I}_\mathcal{D}$} ;
\node (P2) at (225:2cm) {$FA$};
\node (P3) at (315:2cm) {$F(A\bullet \mathbb{I}_\mathcal{C})$};
\draw
(P1) edge[->,>=angle 90] node[above] {$id\otimes r$} (P0)
(P0) edge[->,>=angle 90] node[right] {$\phi$} (P3)
(P3) edge[->,>=angle 90] node[above] {$F\rho$} (P2)
(P1) edge[->,>=angle 90] node[right] {$\rho$} (P2);
\end{tikzpicture}
	\caption{Third monoidal functor diagram}
	\label{fig:monoidal functor3}
\end{figure}

\begin{figure}
	\centering
\begin{tikzpicture}
\node (P0) at (45:2cm) {$FB\otimes FA$};
\node (P1) at (135:2cm) {$FA\otimes FB$} ;
\node (P2) at (225:2cm) {$F(A\bullet B)$};
\node (P3) at (315:2cm) {$F(B\bullet A)$};
\draw
(P1) edge[->,>=angle 90] node[above] {$c$} (P0)
(P0) edge[->,>=angle 90] node[right] {$\phi$} (P3)
(P2) edge[->,>=angle 90] node[above] {$Fc$} (P3)
(P1) edge[->,>=angle 90] node[right] {$\phi$} (P2);
\end{tikzpicture}
	\caption{Braided monoidal functor diagram}
	\label{fig:braided monoidal functor}
\end{figure}

 \begin{defn}
The dual concept of a monoidal functor is called a comonoidal functor. In other words, a comonoidal functor is a functor $F:(\mathcal{C},\bullet,\mathbb{I}_C)\to (\mathcal{D},\otimes,\mathbb{I}_D)$ between two monoidal cateogries equiped with:
\begin{itemize}
\item $\phi:F(A\bullet B)\to F(A)\otimes F(B)$ a natural transformation.
\item $r:F\mathbb{I}_C\to \mathbb{I}_D$ a natural transformation.
\end{itemize}
such that the dual diagrams of \ref{fig:monoidal functor1}, \ref{fig:monoidal functor2} and \ref{fig:monoidal functor3} commute. 

In case there is a braiding, the functor $F$ is called braided if also the diagram dual to  \ref{fig:braided monoidal functor} commutes.
\end{defn}

Sometimes, the more natural framework is one where the morphisms $Hom(A,B)$ is an object in some other category and the composition morphism is an arrow in that other category. For example, in $\mathbf{Vec}$, $Hom(V,W)$ is a vector space and the composition of two such morphisms is bilinear. The general definition of this situation is the following:

\begin{defn}Given $(\mathcal{M},\otimes,\mathbb{I},\alpha,\rho,\lambda)$ a monoidal category. $\mathcal{C}$ is called an enriched category over $\mathcal{M}$ if there is:
\begin{itemize}
\item A family of objects $Obj(\mathcal{C})$
\item $\forall A,B\in Obj(\mathcal{C})$, an object in $\mathcal{M}$, $Hom_\mathcal{C}(A,B)=\mathcal{C}(A,B)\in\mathcal{M}$ 
\item $\forall A,B,C\in Obj(\mathcal{C})$, a composition map $\circ:\mathcal{C}(B,C)\otimes \mathcal{C}(A,B)\to\mathcal{C}(A,C)$, an arrow  in $\mathcal{M}$
\item $\forall A\in Obj(\mathcal{C})$, a unity map $u_A:\mathbb{I}\to \mathcal{C}(A,A)$, an arrow in $\mathcal{M}$
\end{itemize}
And all of them make the diagrams \ref{fig:enriched1}, \ref{fig:enriched2} and \ref{fig:enriched3} commute. In case $\mathcal{M}$ is the category of abelian groups, we call $\mathcal{C}$ a preadditive category.
In case $\mathcal{M}$ is the category of vector spaces over $\mathbb{K}$ with natural tensor product, we call $\mathcal{C}$ a $\mathbb{K}$-linear category. In general, $\mathcal{C}$ is called an $\mathcal{M}$-category.
\end{defn}

\begin{figure}
	\centering
\begin{tikzpicture}
\node (P0) at (90:2cm) {$\mathcal{C}(A,B)$};
\node (P1) at (90+72:2.5cm) {$\mathcal{C}(B,D)\otimes \mathcal{C}(A,B)$} ;
\node (P2) at (90+72+40:4cm) {${(\mathcal{C}(C,D)\otimes\mathcal{C}(B,C))}\otimes \mathcal{C}(A,B)$};
\node (P3) at (90+4*72-40:4cm) {$\mathcal{C}(C,D)\otimes{(\mathcal{C}(B,C)\otimes \mathcal{C}(A,B))}$};
\node (P4) at (90+4*72:2.5cm) {$\mathcal{C}(C,D)\otimes\mathcal{C}(A,C)$};
\draw
(P1) edge[->,>=angle 90] node[left] {$\circ$} (P0)
(P2) edge[->,>=angle 90] node[left] {$\circ\otimes id$} (P1)
(P2) edge[->,>=angle 90] node[above] {$\alpha$} (P3)
(P3) edge[->,>=angle 90] node[right] {$id\otimes \circ$} (P4)
(P4) edge[->,>=angle 90] node[right] {$\circ$} (P0);
\end{tikzpicture}
	\caption{First enriched category diagram}
	\label{fig:enriched1}
\end{figure}

\begin{figure}
	\centering
\begin{tikzpicture}
\node (P0) at (30:2.5cm) {$\mathcal{C}(B,B)\otimes \mathcal{C}(A,B)$};
\node (P1) at (150:2.5cm) {$\mathbb{I}\otimes \mathcal{C}(A,B)$} ;
\node (P2) at (270:1cm) {$\mathcal{C}(A,B)$};
\draw
(P1) edge[->,>=angle 90] node[above] {$u \otimes id$} (P0)
(P0) edge[->,>=angle 90] node[right] {$\circ$} (P2)
(P1) edge[->,>=angle 90] node[left] {$\lambda$} (P2);
\end{tikzpicture}
	\caption{Second enriched category diagram}
	\label{fig:enriched2}
\end{figure}

\begin{figure}
	\centering
\begin{tikzpicture}
\node (P0) at (30:2.5cm) {$\mathcal{C}(A,B)\otimes \mathcal{C}(B,B)$};
\node (P1) at (150:2.5cm) {$\mathcal{C}(A,B)\otimes \mathbb{I}$} ;
\node (P2) at (270:1cm) {$\mathcal{C}(A,B)$};
\draw
(P1) edge[->,>=angle 90] node[above] {$id\otimes u$} (P0)
(P1) edge[->,>=angle 90] node[left] {$\rho$} (P2)
(P0) edge[->,>=angle 90] node[right] {$\circ$} (P2);
\end{tikzpicture}
	\caption{Third enriched category diagram}
	\label{fig:enriched3}
\end{figure}

There exist several examples of enriched categories:
\begin{ex}
A natural first example is the $\mathbf{Set}$-categories. This are just categories since rewriting the definition of enriched category with $\mathcal{M}=\mathbf{Set}$ the definition of a category is reached.
\end{ex}
\begin{ex}
Another usefull example is a $\mathbb{K}$-linear category: the category of vector spaces over $\mathbb{K}$; i.e., $\mathbf{Vec}$.
\begin{itemize}
\item The set of linear maps between two vector spaces is a vector space.
\item The composition of linear maps defines a bilinear map; hence it defines a linear map $\circ:Hom(B,C)\otimes Hom(A,B)\to Hom(A,C)$.
\item There is a linear map $u_A:\mathbb{K}\to End(A)$ such that $u_A(1)=id(A)$.
\end{itemize}
The previous arrows make the diagrams \ref{fig:enriched1}, \ref{fig:enriched2} and \ref{fig:enriched3} commutative.
\end{ex}

Enriched categories also have enriched functors and enriched natural transformations; which are those that respect the category structure of $\mathcal{M}$. For instance, a $\mathbb{K}$-linear functor is one that acts on morphisms as $F(\lambda f+\mu g)=\lambda F(f)+\mu F(g)$; meaning that $F_{X,Y}:Hom(X,Y)\to Hom(F X, F Y)$ is a linear map.

\begin{defn}
An enriched functor between two $\mathcal{M}$-categories, $\mathcal{C}_1$ and $\mathcal{C}_2$ is defined as:
\begin{itemize}
\item A function $F:Obj(\mathcal{C}_1)\to Obj(\mathcal{C}_2)$ on the objects.
\item A collection of $\mathcal{M}$-morphisms $F_{x,y}:Hom_{\mathcal{C}_1}(x,y)\to Hom_{\mathcal{C}_2}(Fx,Fy)$
\end{itemize}
such that the diagrams \ref{fig:enriched functor1} and  \ref{fig:enriched functor2} are commutative.
\begin{figure}
	\centering
\begin{tikzpicture}
\node (P0) at (30:3cm) {$Hom_{\mathcal{C}_1}(a,c)$};
\node (P1) at (150:3cm) {$Hom_{\mathcal{C}_1}(b,c)\otimes Hom_{\mathcal{C}_1}(a,b)$} ;
\node (P2) at (210:3cm) {$Hom_{\mathcal{C}_2}(Fb,Fc)\otimes Hom_{\mathcal{C}_2}(Fa,Fb)$};
\node (P3) at (360-30:3cm) {$Hom_{\mathcal{C}_2}(Fa,Fb)$};
\draw
(P0) edge[->,>=angle 90] node[right] {$F$} (P3)
(P1) edge[->,>=angle 90] node[above] {$\circ_1$} (P0)
(P1) edge[->,>=angle 90] node[right] {$F\otimes F$} (P2)
(P3) edge[->,>=angle 90] node[below] {$\circ_2$} (P2);
\end{tikzpicture}
	\caption{First enriched functor diagram}
	\label{fig:enriched functor1}
\end{figure}

\begin{figure}
	\centering
\begin{tikzpicture}
\node (P0) at (45:2.5cm) {$Hom_{\mathcal{C}_2}(FA,FA)$};
\node (P1) at (180-45:2.5cm) {$Hom_{\mathcal{C}_1}(A,A)$} ;
\node (P2) at (270:1cm) {$\mathbb{I}$};
\draw
(P2) edge[->,>=angle 90] node[right] {$u_2$} (P0)
(P2) edge[->,>=angle 90] node[left] {$u_1$} (P1)
(P1) edge[->,>=angle 90] node[above] {$F_{A,A}$} (P0);
\end{tikzpicture}
	\caption{Second enriched functor diagram}
	\label{fig:enriched functor2}
\end{figure}
\end{defn}

\begin{defn}
An enriched natural transformation between two $\mathcal{M}$-functors, $F,G:\mathcal{C}_1\to\mathcal{C}_2$ is defined as a colletion of $\mathcal{M}$-morphisms $\mu_c:\mathbb{I}\to Hom_{\mathcal{C}_2}(Fc,Gc)$;
such that the diagram \ref{fig:enriched nat} is commutative.
\begin{figure}
	\centering
\begin{tikzpicture}
\node (P0) at (360/8:4cm) {$Hom_{\mathcal{C}_2}(Fb,Gb)\otimes Hom_{\mathcal{C}_2}(Fa,Fb)$};
\node (P1) at (180-360/8:4cm) {$\mathbb{I}\otimes Hom_{\mathcal{C}_1}(a,b)$} ;
\node (P2) at (360/2:2.83cm) {$Hom_{\mathcal{C}_1}(a,b)$};
\node (P3) at (180+360/8:4cm) {$Hom_{\mathcal{C}_1}(a,b)\otimes\mathbb{I}$};
\node (P4) at (-360/8:4cm) {$Hom_{\mathcal{C}_2}(Ga,Gb)\otimes Hom_{\mathcal{C}_2}(Fa,Ga)$};
\node (P5) at (0:2.83cm) {$Hom_{\mathcal{C}_2}(Fa,Gb)$};
\draw
(P2) edge[->,>=angle 90] node[right] {$\cong$} (P1)
(P2) edge[->,>=angle 90] node[right] {$\cong$} (P3)
(P3) edge[->,>=angle 90] node[below] {$G\otimes\mu_a$} (P4)
(P4) edge[->,>=angle 90] node[right] {$\circ_2$} (P5)
(P0) edge[->,>=angle 90] node[right] {$\circ_2$} (P5)
(P1) edge[->,>=angle 90] node[below] {$\mu_b\otimes F$} (P0);
\end{tikzpicture}
	\caption{Enriched natural transformation diagram}
	\label{fig:enriched nat}
\end{figure}
\end{defn}

The product of any two categories can be defined as $Obj(\mathcal{C}_1\times\mathcal{C}_2)=Obj(\mathcal{C}_1)\times Obj(\mathcal{C}_2)$ and $Hom((A_1,A_2),(B_1,B_2))=Hom(A_1,A_2)\times Hom(B_1,B_2)$. This definition is used to define, for example, tensor products as functors from a product category $\mathcal{C}\times\mathcal{C}$. The problem is that it doesn't make sence for $\mathcal{M}$-categories; since the morphisms no longer are defined as sets.

What would make sense is to define $Hom((A_1,A_2),(B_1,B_2))=Hom(A_1,A_2)\otimes Hom(B_1,B_2)$; since the objects can be tensored:

\begin{defn}
Let $\mathcal{C}_1$ and $\mathcal{C}_2$ be $\mathcal{M}$-categories; with $\mathcal{M}$ symmetric monoidal. We then define $\mathcal{C}_1\otimes_\mathcal{M}\mathcal{C}_2$ as the  $\mathcal{M}$-category with:
\begin{itemize}
\item Objects as $Obj(\mathcal{C}_1\otimes_\mathcal{M}\mathcal{C}_2)=Obj(\mathcal{C}_1)\times Obj(\mathcal{C}_2)$
\item Morphisms as $Hom((A_1,A_2),(B_1,B_2))=Hom(A_1,B_1)\otimes Hom(A_2,B_2)$
\item Composition $\bullet:Hom(B,C)\otimes Hom(A,B)\to Hom(A,C)$, as $\bullet=(\circ_1\otimes \circ_2)\circ(id\otimes c\otimes id)$; where $\circ_i:Hom(B_i,C_i)\otimes Hom(A_i,B_i)\to Hom(A_i,C_i)$ is the composition map of $\mathcal{C}_i$. We are considering $A=(A_1,A_2)$, $B=(B_1,B_2)$ and $C=(C_1,C_2)$.
\end{itemize}
\end{defn}

With this last defintion we have all the tools to define monoidal and symmetric monoidal   $\mathcal{M}$-categories in a natural way. Just rewriting the definition of monoidal and symmetric monoidal  categories using enriched functors and enriched natural transformations instead of regular ones; and tensor of $\mathcal{M}$-categories instead of product of categories.

The last concept on monoidal categories we are going to consider is closed monoidal strutures:

\begin{defn}
Let $(\mathcal{C},\otimes,\mathbb{I},\alpha,\lambda,\mu)$ be a monoidal category, then it is closed in case each functor $-\otimes A:\mathcal{C}\to \mathcal{C}$ has an adjoint functor $[A,-]:\mathcal{C}\to \mathcal{C}$ for every $A\in Obj(\mathcal{C})$; i.e., $\mathcal{C}(B\otimes A,C)\cong \mathcal{C}(B,[A,C])$. The object $[A,C]$ is called the inner morphisms between $A$ and $C$.
\end{defn}

\subsection{Color categories}

In this subsection we define the categories of $G$-graded vector spaces; which is the category of $\mathbb{K}[G]$-comodules; and give them several symmetric structures.

\begin{defn}
A $G$-graded vector space is defined as a vector space $V$ such that $V=\bigoplus_{g\in G}V_g$; where $G$ could be non-abelian.
\end{defn}

\begin{lem}
$\mathbf{Vec}_\mathbb{K}^G=(Vec_\mathbb{K}^G,\otimes,\mathbb{K},\alpha,\lambda,\mu)$ is a monoidal category; where $G$ could be non-abelian.

The objects in this category are $G$-graded vector spaces and for any $V$ and $W$, $G$-graded vector spaces, the arrows are defined as $$Hom(V,W)=\{f:V\to W|\ f\text{ is linear and }\forall g\in G,\ f(V_g)\subseteq W_g\}$$

The other elements in the tuple are defined as:

$$\begin{matrix}\otimes&:&Vec_\mathbb{K}^G\times Vec_\mathbb{K}^G&\to& Vec_\mathbb{K}^G\\
&&(V,W)&\mapsto&V\otimes W\end{matrix}$$

$$\begin{matrix}\rho_V&:&V\otimes\mathbb{K}&\to& V\\
&&v\otimes\mu&\mapsto&\mu v\end{matrix}$$

$$\begin{matrix}\lambda_V&:&\mathbb{K}\otimes V&\to &V\\
&&\mu\otimes v&\mapsto&\mu v\end{matrix}$$

$$\begin{matrix}\alpha_{V,U,W}&:&(V\otimes U)\otimes W&\to &V\otimes(U\otimes W)\\
&&(v\otimes u)\otimes w&\mapsto &v\otimes(u\otimes w)\end{matrix}$$
\end{lem}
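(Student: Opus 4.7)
The plan is to reduce the statement to the already-known fact that $\mathbf{Vec}_\mathbb{K}$ is a monoidal category and then verify that all the extra structure (gradings) is respected by the tensor product, the associator, and the two unitors. The whole argument has the flavor of ``everything is inherited from $\mathbf{Vec}_\mathbb{K}$, we only have to check that gradings match''.

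First I would define the grading on a tensor product. Given $V=\bigoplus_{g\in G}V_g$ and $W=\bigoplus_{g\in G}W_g$, set
\[
(V\otimes W)_g \;=\; \bigoplus_{hk=g} V_h\otimes W_k,
\]
which makes $V\otimes W$ into an object of $\mathbf{Vec}_\mathbb{K}^G$. I would also grade the base field by $\mathbb{K}_e=\mathbb{K}$ and $\mathbb{K}_g=0$ for $g\neq e$. With these gradings I would check that $\otimes$ is really a bifunctor on $\mathbf{Vec}_\mathbb{K}^G\times\mathbf{Vec}_\mathbb{K}^G$: if $f\colon V\to V'$ and $f'\colon W\to W'$ are grading-preserving linear maps, then for $v\in V_h$ and $w\in W_k$ we have $(f\otimes f')(v\otimes w)=f(v)\otimes f'(w)\in V'_h\otimes W'_k\subseteq(V'\otimes W')_{hk}$, so $f\otimes f'$ is again a morphism of $\mathbf{Vec}_\mathbb{K}^G$; functoriality in each slot is then inherited from $\mathbf{Vec}_\mathbb{K}$.

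Next I would check that the three structural isomorphisms are morphisms in $\mathbf{Vec}_\mathbb{K}^G$. For $\rho_V\colon V\otimes\mathbb{K}\to V$, only the component $V_g\otimes\mathbb{K}_e$ contributes to the degree $g$ part of $V\otimes\mathbb{K}$, and it is sent into $V_g$; similarly for $\lambda_V$. For $\alpha_{V,U,W}$, an elementary tensor $(v\otimes u)\otimes w$ with $v\in V_a$, $u\in U_b$, $w\in W_c$ sits in degree $(ab)c$, is sent to $v\otimes(u\otimes w)$, which sits in degree $a(bc)$; these coincide by associativity in $G$, so $\alpha$ is grading preserving. The fact that these maps are $\mathbb{K}$-linear isomorphisms is already known in $\mathbf{Vec}_\mathbb{K}$, and their inverses are likewise grading preserving by the same argument.

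Finally I would argue that the pentagon diagram \ref{fig:pentagon} and the triangle diagram \ref{fig:triangle} commute in $\mathbf{Vec}_\mathbb{K}^G$. The key observation is that the forgetful functor $U\colon\mathbf{Vec}_\mathbb{K}^G\to\mathbf{Vec}_\mathbb{K}$ is faithful and sends the data above to the standard monoidal structure of $\mathbf{Vec}_\mathbb{K}$; commutativity of the two coherence diagrams in $\mathbf{Vec}_\mathbb{K}$ (a classical fact) therefore forces the corresponding diagrams in $\mathbf{Vec}_\mathbb{K}^G$ to commute. I do not expect any serious obstacle: every step is a direct check, and the only place that requires a moment of care is the compatibility of the grading on $V\otimes W$ with the grouping used in the associator, which is exactly the associativity of the multiplication in $G$.
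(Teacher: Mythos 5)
Your proof is correct and follows essentially the same route as the paper: define the grading $(V\otimes W)_g=\bigoplus_{hk=g}V_h\otimes W_k$, check that $\otimes$ is a bifunctor on graded maps, and observe that the coherence data and diagrams are inherited from $\mathbf{Vec}_\mathbb{K}$. If anything you are more explicit than the paper (which dismisses the unitors, associator, pentagon and triangle as ``clear''), in particular in noting that the associator preserves degree precisely because multiplication in $G$ is associative, and in invoking the faithful forgetful functor to transport the coherence diagrams.
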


\begin{proof}

It is clear that $Vec_\mathbb{K}^G$ is a category, $\alpha$ makes the pentagon diagram \ref{fig:pentagon} commutative, and $\rho$, $\lambda$ and $\alpha$
make the triangle diagram \ref{fig:triangle} commutative; and that the three are isomorphisms for all objects.

What is left to prove is that $-\otimes-$ is a functor.

If we define $|v\otimes w|=|v||w|$, it follows that $V\otimes W=\bigoplus_{g\in G}(V\otimes W)_g$ where $(V\otimes W)_g=\bigoplus_{h\in G}V_h\otimes W_{h^{-1}g}$. In conclusion $\otimes:Vec_\mathbb{K}^G\times Vec_\mathbb{K}^G\to Vec_\mathbb{K}^G$ is well defined for objects.

The tensor on morphisms is defined as $\otimes(f,g)(v\otimes w)=(f\otimes g)(v\otimes w)=(f v)\otimes (g w)$. This is well defined and $\otimes(id_V,id_W)=id_{V\otimes W}$. Also $(f\circ g)\otimes (h\circ t)=(f\otimes h)\circ(g\otimes t)$. Hence $\otimes$ is a functor.
\end{proof}

\begin{rem}The category of $G$-graded vector spaces is just the category of comodules of the group algebra $\mathbb{K}[G]$; the coaction is defined as $v\to v\otimes g$ for $v\in V_g$. \end{rem}

The monoidal structure of the category has been stablished, but the number of symmetric structures that this particular categories admit is big; and so we need to concern ourselves now with some specific symmetries. In particular we will deal with symmetries related to the group structure of $G$.

\begin{defn}
A skew-symmetric bicharacter is a mapping $\chi:G\times G\to \mathbb{K}^\times$ such that:
\begin{itemize}
\item $\chi(ab,c)=\chi(a,c)\chi(b,c)$ and $\chi(a,bc)=\chi(a,b)\chi(a,c)$
\item $\chi(a,b)\chi(b,a)=1$
\end{itemize}

We will define as a color the pair $(G,\chi)$ as above.
\end{defn}

\begin{lem}\label{gg}
Let $(G,\chi)$ be a color, then $\chi(g,g)=\pm 1$.
\end{lem}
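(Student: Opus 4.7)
The plan is to apply the skew-symmetry condition with both arguments equal. Taking $a = b = g$ in the identity $\chi(a,b)\chi(b,a) = 1$ immediately yields $\chi(g,g)^2 = 1$, and since $\mathbb{K}$ is a field (so the equation $x^2 = 1$ has only the solutions $x = \pm 1$), the conclusion $\chi(g,g) = \pm 1$ follows.

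The only subtlety worth flagging is that $\chi$ takes values in $\mathbb{K}^\times$, so we are solving $x^2 = 1$ in the multiplicative group of a field of characteristic zero; this polynomial has exactly the two roots $\pm 1$, and both are distinct since $\mathrm{char}\,\mathbb{K} = 0$. The bilinearity axiom of $\chi$ is not even needed for this particular statement; the skew-symmetry relation alone suffices. I expect no obstacle, as this is a one-line verification.
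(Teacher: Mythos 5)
Your proof is correct and is precisely the intended one-line argument: setting $a=b=g$ in the skew-symmetry axiom gives $\chi(g,g)^2=1$, and a field has only the two square roots $\pm 1$ of unity. The paper omits the proof entirely, so there is nothing to compare beyond noting that your argument is the standard one and that your observation that bilinearity is not needed is accurate.
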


\begin{defn}
$G_+=\{g\in G|\ \chi(g,g)=1\}$ are called the even $G$-elements and $G_-=\{g\in G|\ \chi(g,g)=-1\}$ are called the odd $G$-elements. For a $G$-graded vector space $V=(\bigoplus_{g\in G_+} V_g)\oplus(\bigoplus_{g\in G_-} V_g)=V_+\oplus V_-$. 

$V_-$ is called the odd part and $V_+$ is called the even part.
\end{defn}

\begin{lem}
Given a color $C=(G,\chi)$, the natural isomorphism defined on homogeneous elements as $$ \begin{matrix}c^C_{V,W}&:&V_h\otimes W_g&\to&W_g\otimes V_h\\  
&&v\otimes w&\mapsto &\chi(h,g)w\otimes v\end{matrix}$$ gives a symmetric braiding to the monoidal cathegory
$\mathbf{Vec}_\mathbb{K}^G$.
\end{lem}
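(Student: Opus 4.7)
The plan is to verify the four things required of a symmetric braiding in turn: that $c^C_{V,W}$ is well-defined as a morphism in $\mathbf{Vec}_\mathbb{K}^G$, that it is natural in both arguments, that it satisfies the two hexagon diagrams \ref{fig:hexagon1} and \ref{fig:hexagon2}, and that it is self-inverse. Each will reduce to a property of the bicharacter $\chi$, after which the only content is bookkeeping on homogeneous elements. Because $G$ is abelian, $V_h\otimes W_g$ and $W_g\otimes V_h$ both sit in degree $hg=gh$ of the tensor product, and since $\chi(h,g)\in\mathbb{K}^\times$ the map $v\otimes w\mapsto \chi(h,g)\,w\otimes v$ is a grade-preserving linear isomorphism; extending by linearity over the direct-sum decomposition $V\otimes W=\bigoplus_{h,g} V_h\otimes W_g$ gives a morphism in $\mathbf{Vec}_\mathbb{K}^G$.

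For naturality, I would take graded morphisms $f:V\to V'$ and $f':W\to W'$ and chase a homogeneous $v\otimes w\in V_h\otimes W_g$ around the square: since $f$ and $f'$ preserve degrees, both $(f'\otimes f)\circ c^C_{V,W}$ and $c^C_{V',W'}\circ(f\otimes f')$ land on $\chi(h,g)(f'w)\otimes(fv)$, and naturality follows. For symmetry, applying $c^C_{W,V}\circ c^C_{V,W}$ to a homogeneous $v\otimes w$ produces the scalar $\chi(h,g)\chi(g,h)$, which equals $1$ by the skew-symmetry axiom of $\chi$, so $c^C_{W,V}\circ c^C_{V,W}=\mathrm{id}$.

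The core of the argument is the commutativity of the two hexagons. For the first hexagon, I would evaluate both paths on a triple homogeneous element $a\otimes b\otimes c$ with $a\in A_{h_1}$, $b\in B_{h_2}$, $c\in C_{h_3}$. The top route applies $c^C_{A,B\otimes C}$, producing the scalar $\chi(h_1,h_2h_3)$, while the lower route $(\mathrm{id}\otimes c^C_{A,C})\circ\alpha\circ(c^C_{A,B}\otimes\mathrm{id})\circ\alpha^{-1}$ produces $\chi(h_1,h_2)\chi(h_1,h_3)$. The multiplicativity axiom $\chi(a,bc)=\chi(a,b)\chi(a,c)$ makes these equal; the second hexagon is analogous and uses $\chi(ab,c)=\chi(a,c)\chi(b,c)$ instead. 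Since the associators $\alpha$ in $\mathbf{Vec}_\mathbb{K}^G$ are the underlying vector-space associators (they carry no scalar) they only rebracket and contribute nothing to the comparison.

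I do not expect any real obstacle: every step is a direct translation of an axiom of $\chi$ into the corresponding coherence condition, and the abelianness of $G$ ensures that the homogeneous decomposition is respected on both sides of every diagram. The only thing to be careful about is keeping track of which factor of $\chi$ comes from which application of $c^C$ in the hexagons, so I would write out the computation once in full on a homogeneous triple and then appeal to linearity to conclude the diagrammatic identities on arbitrary tensors.
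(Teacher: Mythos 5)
Your proposal is correct and complete: the paper in fact states this lemma without proof, and your verification supplies exactly the standard argument one would expect — abelianness of $G$ makes the flip degree-preserving, skew-symmetry $\chi(h,g)\chi(g,h)=1$ gives $c^C_{W,V}\circ c^C_{V,W}=\mathrm{id}$, and the two multiplicativity axioms of the bicharacter account for the two hexagons, with the associators contributing no scalar. The only point worth emphasizing (which the paper itself remarks immediately after the lemma) is the one you already identified: without $G$ abelian, $c^C_{V,W}$ fails to be a morphism of graded spaces, so the well-definedness step is where the hypothesis is genuinely used.
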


In this last result, it is mandatory that $G$ be abelian; since otherwise $c^C$ is not a morphism of graded spaces.

Most of the time, by abuse of notation we shall write $\chi(v,w)$ instead of $\chi(|v|,|w|)$ for homogenous elements $v$ and $w$.



\begin{defn}
For a color $(G,\chi)$, denote by $\mathbf{Vec_\mathbb{K}(G,\chi)}$ the symmetric monoidal category given by the monoidal category $\mathbf{Vec}_\mathbb{K}^G$ and the braiding given by $\chi$.

This family of categories are the color categories.
\end{defn}

Examples of the previous categories are:

\begin{itemize}
\item Vector spaces: the color given by $G=\{0\}$ and $\chi(0,0)=1$.
\item $G$-graded vector spaces: the color given by $\chi(g,h)=1$.
\item Super vector spaces: the color given by $G=\mathbb{Z}_2$ and $\chi(a,b)=(-1)^{ab}$.
\end{itemize}

The key property of this categories, that will be used through the paper, is that we can consider this category as a $\mathbb{K}$-linear category:

\begin{lem}
The tensor of $G$-graded vector spaces and all the natural transformations that define the symmetric monoidal structure of $\mathbf{Vec_\mathbb{K}(G,\chi)}$ are multilinear; and hence they define linear mappings on the tensor products. Such structures make $(Vec_\mathbb{K}^G,\widehat{\otimes},\mathbb{K},\hat{\alpha},\hat{\lambda},\hat{\rho},\widehat{c^C})$ into a $\mathbb{K}$-linear category.
\end{lem}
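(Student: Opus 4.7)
The strategy is to verify that every piece of data of the symmetric monoidal structure on $\mathbf{Vec}_\mathbb{K}(G,\chi)$ is $\mathbb{K}$-multilinear, and then to invoke the universal property of $\otimes_\mathbb{K}$ to lift each of these pieces to the enriched setting. I would begin by noting that $\mathbf{Vec}_\mathbb{K}^G$ is already $\mathbb{K}$-linear in the required sense: $\mathrm{Hom}(V,W)$ is a vector subspace of $\mathrm{Hom}_\mathbb{K}(V,W)$ (a linear combination of grade-preserving maps is grade-preserving), composition is bilinear because it is so in $\mathbf{Vec}_\mathbb{K}$, and $u_A:\mathbb{K}\to \mathrm{Hom}(A,A)$ sending $1\mapsto id_A$ is a well-defined linear map. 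The enriched category diagrams \ref{fig:enriched1}--\ref{fig:enriched3} then commute by routine evaluation on elementary tensors and the known associativity and unitality of composition of linear maps.

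Next I would check that the tensor bifunctor acts bilinearly on morphisms: for any $f_1,f_2 \in \mathrm{Hom}(V,V')$, $g \in \mathrm{Hom}(W,W')$, and $\mu \in \mathbb{K}$, the identity
\[
(\mu f_1 + f_2)\otimes g \;=\; \mu(f_1\otimes g) + (f_2\otimes g)
\]
is immediate on pure tensors, and analogously in the second slot. The universal property of $\otimes_\mathbb{K}$ therefore produces a $\mathbb{K}$-linear map
\[
\widehat{\otimes}\colon \mathrm{Hom}(V,V')\otimes \mathrm{Hom}(W,W') \longrightarrow \mathrm{Hom}(V\otimes W,\, V'\otimes W'),
\]
which is the enriched version of the bifunctor. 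Its enriched functoriality (compatibility with composition and units) follows directly from the ordinary functoriality of $\otimes$ together with the bilinearity just recorded.

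Finally, I would interpret each structural natural transformation as an enriched one. Since $\alpha_{X,Y,Z}$, $\lambda_X$, $\rho_X$, and $c^C_{X,Y}$ are a fixed isomorphism for each tuple of objects, their enriched counterparts $\hat{\alpha}$, $\hat{\lambda}$, $\hat{\rho}$, $\widehat{c^C}$ are the linear maps $\mathbb{K}\to \mathrm{Hom}(-,-)$ sending $1$ to the corresponding isomorphism. Enriched naturality (diagram \ref{fig:enriched nat}) reduces, after evaluating at $1\in\mathbb{K}$ together with a pure tensor of morphisms, to ordinary naturality; and each coherence axiom (the pentagon, triangle, hexagons, and the symmetry condition $c\circ c=id$) commutes in the enriched setting iff it commutes when evaluated on pure tensors of elements of $\mathbb{K}$ and homogeneous maps, which is precisely the content of the already-verified axioms for $\mathbf{Vec}_\mathbb{K}(G,\chi)$.

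I expect the main obstacle to be purely bookkeeping: one must thread the universal property of $\otimes_\mathbb{K}$ through each of the coherence diagrams in order to translate it from the non-enriched to the enriched formulation. There is no new algebraic content beyond the bilinearity of $\otimes$ and of composition on morphism spaces, both of which are standard properties of $\mathbf{Vec}_\mathbb{K}$ inherited by the full subcategory $\mathbf{Vec}_\mathbb{K}^G$.
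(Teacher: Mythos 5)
Your proposal is correct and is exactly the routine verification the paper has in mind (the paper in fact states this lemma without proof, having already remarked that one only needs to rewrite the monoidal axioms using enriched functors and the tensor of $\mathbb{K}$-linear categories). The only step with genuine content is the bilinearity of $(f,g)\mapsto f\otimes g$ on morphisms, which you correctly isolate and factor through $\otimes_\mathbb{K}$ via its universal property; everything else reduces to the already-established non-enriched coherence.
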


A notation that we shall use extensively is given by the following:

Since the category $\mathbf{Vec_\mathbb{K}(G,\chi)}$ is symmetric, it implies that there is a left action of $S_n$ onto $V^{\otimes n}$ by automorphisms. This action, using the definition of the symmetry $c^C_{A,B}(v\otimes w)=\chi(|v|,|w|)(w\otimes v)$ on homogeneous elements, is given by  $\sigma:V^{\otimes n}\to V^{\otimes n}$ such that $\sigma\cdot x=\sigma(x_1\otimes\dots\otimes x_n)=k(x_{\sigma^{-1} 1}\otimes\dots\otimes x_{\sigma^{-1} n})$ on homogenous elements; where $k\in\mathbb{K}^\times$ depends on $\chi,\ x$ and $\sigma$. 

\begin{defn}
Usually we will denote the non zero element $k$ by $$\chi(x,\sigma)\ \text{ or }\ \chi(x,x_{\sigma^{-1} 1}\otimes\dots\otimes x_{\sigma^{-1} n})$$
\end{defn}

\subsection{Group bicharacters}
\begin{defn}
The linear character of a group $G$ is a group homomorphism $\chi:G\to \mathbb{K}^\times$. This set of group homomorphisms form a group by pointwise multiplication denoted by $G^*$ or $\widehat{G}$, and called the dual group.
\end{defn}

In case the group $G$ is finite and abelian, the bicharacter can be taken from a much bigger selection, as the dual group $G^*$ of linear characters of $G$ is isomorphic to $G$. This is no longer true for non abelian groups. For example $S_5$ has only two non-isomorphic linear characters; and the linear characters and in close relation with the bicharacters.

\begin{lem}
For a finite group $G$, $Bicharacters(G)\cong Hom(Ab(G),G^*)$; where $Ab(G)=G/[G,G]$.
\end{lem}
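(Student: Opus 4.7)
The plan is to exhibit an explicit isomorphism by currying the bicharacter in its first argument. Given any bicharacter $\chi:G\times G\to\mathbb{K}^\times$, define $\Phi(\chi):G\to G^*$ by $\Phi(\chi)(a)=\chi(a,-)$. The defining property $\chi(a,bc)=\chi(a,b)\chi(a,c)$ says precisely that $\chi(a,-)$ is a linear character, so $\Phi(\chi)(a)\in G^*$; and the property $\chi(ab,c)=\chi(a,c)\chi(b,c)$ says precisely that $\Phi(\chi):G\to G^*$ is a group homomorphism. Both $\mathrm{Bicharacters}(G)$ and $\mathrm{Hom}(G,G^*)$ carry natural abelian group structures by pointwise multiplication in $\mathbb{K}^\times$, and $\Phi$ is manifestly a group homomorphism with respect to them.

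The key observation I would then use is that the target $G^*=\mathrm{Hom}(G,\mathbb{K}^\times)$ is abelian, because $\mathbb{K}^\times$ is. Any group homomorphism from $G$ to an abelian group kills the commutator subgroup $[G,G]$, so $\Phi(\chi):G\to G^*$ descends uniquely to $\overline{\Phi(\chi)}:Ab(G)\to G^*$. This yields a well-defined map $\overline{\Phi}:\mathrm{Bicharacters}(G)\to\mathrm{Hom}(Ab(G),G^*)$.

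For the inverse, given $\psi:Ab(G)\to G^*$, set $\Psi(\psi)(a,b):=\psi(\bar a)(b)$ where $\bar a$ denotes the image of $a$ in $Ab(G)$. Bilinearity of $\Psi(\psi)$ in the first argument follows from $\psi$ being a homomorphism, and bilinearity in the second from $\psi(\bar a)$ being a character; hence $\Psi(\psi)$ is a bicharacter. A direct check shows $\overline{\Phi}\circ\Psi=\mathrm{id}$ and $\Psi\circ\overline{\Phi}=\mathrm{id}$, both reducing to the tautology $\chi(a,b)=\chi(a,-)(b)$ after unwinding the universal property of abelianization.

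There is no real obstacle in this argument; it is essentially the adjunction between $\mathrm{Hom}(-,-)$ and cartesian product, combined with abelianization as left adjoint to the inclusion of abelian groups into groups. I would remark that finiteness of $G$ plays no role in this lemma itself — it was invoked in the surrounding discussion only to ensure $G^*\cong G$ when $G$ is additionally abelian, which is what makes bicharacters plentiful in the intended applications.
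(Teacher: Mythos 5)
Your proposal is correct and follows essentially the same route as the paper: curry the bicharacter in its first argument to get a homomorphism $G\to G^*$, observe that $G^*$ is abelian so this factors through $Ab(G)$, and uncurry for the inverse. Your added remarks (the explicit inverse, the pointwise group structures, and the observation that finiteness is not needed for this lemma) are all accurate but do not change the argument.
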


\begin{proof}
Let $\chi:G\times G\to\mathbb{K}$ be a bicharacter. Then for all $g\in G$, $\chi(g,-):G\to \mathbb{K}^\times$ is a linear character. We have then that $\chi_\rho:G\to G^*$ is a group homomorphism; where $\chi_\rho(g)(h)=\chi(g,h)$.

Consider now a group homomorphism $\rho:G\to G^*$, and $g,\ h\in G$. Then $\rho_\chi:G\times G\to\mathbb{K}^\times$ is a bicharacter; where $\rho_\chi(g,h)=\rho(g)(h)$.

In conclusion $Bicharacters(G)\cong Hom(G,G^*)$. Since $G^*$ is an abelian group, it follows that $Bicharacters(G)\cong Hom(G/[G,G],G^*)$.
\end{proof}

In case $\mathbb{K}=\mathbb{C}$ and the group is topological, one can consider only continuous characters and bicharacters $\chi$ such that $im(\chi)\subseteq \mathbb{S}^1$.

\begin{cor}
In case $G$ is finite, $Bicharacters(G)\cong End(G^*)$; and in case $G$ is abelian $Bicharacter(G)\cong Hom(G,G^*)$.
\end{cor}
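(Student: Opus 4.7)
The plan is to deduce both parts of the corollary directly from the preceding lemma, $Bicharacters(G)\cong Hom(Ab(G),G^*)$, by a simple substitution in each case.

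For the abelian case the argument is immediate: if $G$ is abelian then the commutator subgroup $[G,G]$ is trivial, so $Ab(G)=G$, and the lemma collapses to $Bicharacters(G)\cong Hom(G,G^*)$. No further work is needed.

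For the finite case I first observe that since $\mathbb{K}^\times$ is abelian, every linear character $G\to\mathbb{K}^\times$ factors uniquely through the abelianization, producing a canonical isomorphism $G^*\cong (Ab(G))^*$. Next, for the finite abelian group $Ab(G)$, the structure theorem gives a (non-canonical) isomorphism $Ab(G)\cong (Ab(G))^*$, provided $\mathbb{K}$ contains enough roots of unity (which is the usual implicit assumption for this kind of duality in characteristic zero; for instance, $\mathbb{K}=\mathbb{C}$ works). Composing these two gives $Ab(G)\cong G^*$, and substituting into the lemma yields
\[
Bicharacters(G)\cong Hom(Ab(G),G^*)\cong Hom(G^*,G^*)=End(G^*),
\]
as desired.

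The only subtle point I anticipate is the duality step $A\cong A^*$ for a finite abelian group $A$, which is genuinely non-canonical and tacitly depends on $\mathbb{K}$ supplying primitive roots of unity of the appropriate orders. Consequently the isomorphism $Bicharacters(G)\cong End(G^*)$ should be read as one of abstract groups rather than as a natural or functorial identification. Once this is granted, both parts of the corollary are essentially formal consequences of the previous lemma together with the standard characterization of $G^*$ as the dual of $Ab(G)$.
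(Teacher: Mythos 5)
Your argument is correct and is exactly the route the paper intends: the abelian case is the lemma with $[G,G]=1$, and the finite case substitutes $Ab(G)\cong (Ab(G))^{*}=G^{*}$ into $Hom(Ab(G),G^{*})$ (the paper states this corollary without proof, relying on its earlier remark that $G^{*}\cong G$ for finite abelian $G$). Your caveat that the duality $A\cong A^{*}$ is non-canonical and requires $\mathbb{K}$ to contain the relevant roots of unity is a fair observation about a hypothesis the paper leaves implicit.
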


Reconsidering our example, $S_5^*\cong \mathbb{Z}_2$ and $End(\mathbb{Z}_2)\cong \mathbb{Z}_2$; concluding that $S_5$ has only two bicharacters.

Another example can be $\mathbb{Z}$ with the discrete topology and field $\mathbb{C}$, where $Bicharacter(\mathbb{Z})\cong \mathbb{S}^1$; since $\chi(n,m)=\chi(1,1)^{nm}$ and $\chi(1,1)\in \mathbb{S}^1$.

If one considers $\mathbb{Z}$ in the more general setting, then $Bicharacter(\mathbb{Z})\cong\mathbb{K}^\times$ following the same argument as before.

If we restrict then to skew-symmetric bicharacters of finite abelian groups, then we have the following results:

\begin{thm}
The only skew symmetric bicharacter of the group $\mathbb{Z}_{2k+1}$ is the trivial one: $\chi(a,b)=1$  for any $a,b\in \mathbb{Z}_{2k+1}$
\end{thm}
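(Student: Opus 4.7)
The plan is to reduce the determination of $\chi$ to a single scalar, namely $\zeta := \chi(1,1) \in \mathbb{K}^\times$, and then show that the two defining constraints of a skew-symmetric bicharacter force $\zeta = 1$.

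First I would use bilinearity (i.e.\ the bicharacter property $\chi(ab,c) = \chi(a,c)\chi(b,c)$ and $\chi(a,bc) = \chi(a,b)\chi(a,c)$) applied iteratively to the generator $1 \in \mathbb{Z}_{2k+1}$ to conclude that, for representatives $a,b \in \mathbb{Z}$ of classes in $\mathbb{Z}_{2k+1}$, we have $\chi(a,b) = \zeta^{ab}$. So $\chi$ is completely determined by $\zeta$.

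Two constraints then pin down $\zeta$. The first is well-definedness on $\mathbb{Z}_{2k+1}$: since $(2k+1)\cdot 1 = 0$ in the group, bilinearity forces $\zeta^{2k+1} = \chi((2k+1)\cdot 1,\, 1) = \chi(0,1) = 1$. The second is skew-symmetry, which via Lemma \ref{gg} gives $\chi(1,1) = \pm 1$, i.e.\ $\zeta^2 = 1$. Thus the order of $\zeta$ in $\mathbb{K}^\times$ divides $\gcd(2k+1, 2) = 1$, so $\zeta = 1$ and consequently $\chi \equiv 1$.

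There is essentially no obstacle here; the only step worth stating carefully is the reduction $\chi(a,b) = \zeta^{ab}$, which requires combining the left- and right-bilinearity of $\chi$ and checking that the resulting expression depends only on the residue classes modulo $2k+1$ (and that follows immediately once $\zeta^{2k+1}=1$ is established). The proof works for any field $\mathbb{K}$, characteristic zero or not, since the argument only uses the group structure of $\mathbb{K}^\times$ and the coprimality of $2$ and $2k+1$.
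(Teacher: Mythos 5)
Your proof is correct and follows essentially the same route as the paper: reduce everything to $\zeta=\chi(1,1)$, invoke Lemma \ref{gg} to get $\zeta^2=1$, and combine this with $\zeta$ raised to an odd power being $1$ (you use $\zeta^{2k+1}=\chi(0,1)=1$, the paper uses $\zeta^{(2k+1)^2}=\chi(0,0)=1$) to conclude $\zeta=1$. The only difference is cosmetic, and your observation that the argument works over any field is a fair point but does not change the substance.
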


\begin{proof}

$$1=\chi(0,0)=\chi(2k+1,2k+1)=\chi(1,1)^{(2k+1)^2}=$$ $$\chi(1,1)^{4k^2+4k+1}=\chi(1,1)(\chi(1,1)^{4(k^2+k)})=\chi(1,1)$$ In the previous computations we have used that $\chi(1,1)\in\{-1,1\}$ as shown in \ref{gg}.

Since $\chi(a,b)=\chi(1,1)^{ab}$ for $a,b\ne 0$ it follows that then $\chi(a,b)=1$ since we know already that $\chi(0,a)=\chi(a,0)=1$.
\end{proof}

\begin{thm}
The only skew symmetric bicharactesr of the group $\mathbb{Z}_{2k}$ is the trivial one and even-odd one: $\chi(a,b)=1$ if $ab$ is even and $\chi(a,b)=-1$ if $ab$ is odd.
\end{thm}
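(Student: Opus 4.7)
The proof should follow the same pattern as the previous theorem: by the bicharacter property, $\chi$ is determined by $\chi(1,1)$, and by Lemma \ref{gg} this value is $\pm 1$, giving exactly two candidates which we then verify.

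First I would observe that bilinearity gives $\chi(a,b) = \chi(1,1)^{ab}$ for all $a,b \in \mathbb{Z}_{2k}$, so $\chi$ is completely determined by $\chi(1,1)$. By Lemma \ref{gg}, $\chi(1,1) \in \{1, -1\}$, so there are at most two possibilities. The well-definedness on $\mathbb{Z}_{2k}$ requires $\chi(1,1)^{2k} = 1$, which holds automatically since $\chi(1,1) = \pm 1$ and $2k$ is even.

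Next I would split into the two cases. If $\chi(1,1) = 1$, then $\chi(a,b) = 1^{ab} = 1$ for all $a,b$, which is the trivial bicharacter. If $\chi(1,1) = -1$, then $\chi(a,b) = (-1)^{ab}$, which is $1$ when $ab$ is even and $-1$ when $ab$ is odd: this is the even-odd bicharacter.

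Finally I would verify both candidates are indeed skew-symmetric bicharacters. Bilinearity follows immediately from the exponential formula $\chi(a,b) = \chi(1,1)^{ab}$. Skew-symmetry $\chi(a,b)\chi(b,a) = 1$ is trivial since $\chi(a,b) = \chi(b,a) \in \{\pm 1\}$, so $\chi(a,b)\chi(b,a) = \chi(a,b)^2 = 1$. I do not expect any real obstacle here; the argument is essentially the observation that on a cyclic group a bicharacter is determined by a single value, combined with Lemma \ref{gg}, and unlike the odd-order case there is no parity obstruction that forces $\chi(1,1) = 1$.
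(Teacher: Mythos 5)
Your proposal is correct and follows essentially the same route as the paper: both reduce the problem to the single value $\chi(1,1)\in\{1,-1\}$ via the relation $\chi(a,b)=\chi(1,1)^{ab}$ and Lemma \ref{gg}, observe that the consistency condition coming from $0=2k$ imposes no constraint because the exponent is even, and identify the two resulting bicharacters. Your version is slightly more careful in explicitly verifying that both candidates are indeed well-defined skew-symmetric bicharacters, which the paper leaves implicit, but this is the same argument.
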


\begin{proof}

$$1=\chi(0,0)=\chi(2k,2k)=\chi(1,1)^{(2k)^2}=\chi(1,1)^{4k^2}$$

Then the element $\chi(1,1)$ can be chosen either as $1$ or as $-1$. If $\chi(1,1)=1$ we get the trivial bicharacter and if $\chi(1,1)=-1$ we get the even-odd bicharacter.
\end{proof}

\subsection{Categorical tools}

The main non-trivial example of color category is that of super vector spaces; or $\mathbf{SVec}_\mathbb{K}$. For this particular category there have been developed some functorial tools that helped on the study of superalgebras, or algebras on $\mathbf{SVec}_\mathbb{K}$. In this subsection we study a generalization of this tools.

Firstly, there is a concept of Grassmann superalgebra as the free associtive, commutative and unital algebra on $\mathbf{SVec}_\mathbb{K}$ generated by an infinite countable dimensional vector space of odd elements.

The generalization of this concept is given by:

\begin{defn}\label{freeCommu}
Consider $V$ a $G$-graded vector space, then $T(V)$ is also $G$-graded. $T(V)$ with the natural product is an associative algebra in the color category of any color $(G,\chi)$.

Consider then the ideal generated by $I(\chi)=\langle u\otimes v-\chi(g,h)v\otimes u|\ v\in V_g,\ u\in V_h \rangle$.

Then $G(V,\chi):=T(V)/I(\chi)$ is commutative in $\mathbf{Vec_\mathbb{K}(G,\chi^{op})}$; and it is $G$-graded since $I(\chi)$ is also graded.

Specify $V= \mathbb{K}<\{x_g^i|\ g\ne 1;\ i\in\mathbb{N}\}\cup\{1\}>$ and define
$Gr^G_\chi=G(V,\chi)$. This is denoted as the Grassmann color algebra of $(G,\chi)$.
\end{defn}

Examples of the previous constructions are:
\begin{itemize}
\item In case of vector spaces $G(V,\chi)=S(V)$; the symmetric algebra.
\item In case of super vector spaces $G(V,\chi)=S(V_0)\otimes G(V_1)$; where $G(V_1)$ is the Grassmann super algebra generated by $V_1$.
\end{itemize}

Notice that in the case of super vector spaces $\chi(a,b)=\chi(b,a)$ and so $\chi^{op}=\chi$. This is the reason why the Grassmann superalgebra is commutative in $\mathbf{SVec}_\mathbb{K}$.

The next step is to define a generalization of the Grassmann envelope. To reach this concept we first need this three other functors:

\begin{defn}
$(-)_0:\mathbf{Vec(G,\chi)}\to \mathbf{Vec}$ defined as $(V)_0$ is the vector space of homogenous elements in $V$ with degree $e\in G$; the neutral element.

Given an arrow $f\in Hom(V,W)$ in $\mathbf{Vec(G,\chi)}$; $f_0\in Hom(V_0,W_0)$ is defined as $f_0=f|_{V_0}$.
\end{defn}

\begin{defn}
$Forget:\mathbf{Vec(G,\chi)}\to\mathbf{Vec(G,1)}$ defined as the forgetful functor that forgets the symmetry; but no the grading.
\end{defn}

\begin{defn}
$Gr^G_\chi: \mathbf{Vec(G,1)}\to \mathbf{Vec(G,\chi)}$ defined as $Gr^G_\chi(V)=Gr^G_\chi\otimes V$.

Given an arrow $f\in Hom(V,W)$ in $\mathbf{Vec(G,1)}$; $$Gr^G_\chi(f)\in Hom(Gr^G_\chi(V),Gr^G_\chi(V))$$ is defined as $$Gr^G_\chi(f)=id(Gr^G_\chi)\otimes f$$
\end{defn}

\begin{prop}
The functors $(-)_0$, $Forget$ and $Gr_\chi^G$ are monoidal functors.
\end{prop}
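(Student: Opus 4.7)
The plan is to treat the three functors separately, in each case exhibiting the data $(\phi,r)$ required of a monoidal functor and then verifying the three coherence diagrams. In order of increasing difficulty: $Forget$ is essentially tautological, $(-)_0$ requires only that the unit object has degree $e$, and $Gr^G_\chi$ needs the algebra structure on the Grassmann object to produce and control $\phi$.

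For $Forget: \mathbf{Vec_\mathbb{K}(G,\chi)} \to \mathbf{Vec_\mathbb{K}(G,1)}$ the monoidal data (objects, morphisms, tensor, unit, associator, unitors) is preserved on the nose; only the braiding is altered. So one may take $\phi$ and $r$ both to be identities and the three monoidal-functor diagrams become tautologies. For $(-)_0$, the unit $\mathbb{K}$ of $\mathbf{Vec_\mathbb{K}(G,\chi)}$ lies in degree $e$, giving $(\mathbb{K})_0 = \mathbb{K}$ and $r = \mathrm{id}$; and for graded spaces $V,W$, every element of $V_0 \otimes W_0$ has degree $e\cdot e = e$, so the natural inclusion $\phi_{V,W}: V_0 \otimes W_0 \hookrightarrow (V\otimes W)_0$ serves. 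The coherence diagrams follow because the associator and unitors of $\mathbf{Vec_\mathbb{K}(G,\chi)}$ restrict, on the degree-$e$ components, to those of $\mathbf{Vec}$.

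The substantive case is $Gr^G_\chi: \mathbf{Vec_\mathbb{K}(G,1)} \to \mathbf{Vec_\mathbb{K}(G,\chi)}$. Here $r : \mathbb{K} \to Gr^G_\chi = Gr^G_\chi \otimes \mathbb{K}$ is the unit $1$ of the Grassmann color algebra, and I define
$$\phi_{V,W}: (Gr^G_\chi \otimes V)\otimes(Gr^G_\chi \otimes W) \longrightarrow Gr^G_\chi \otimes (V\otimes W)$$
as the composite that first uses the target braiding $c^C$ to swap the middle factors $V$ and $Gr^G_\chi$, producing $Gr^G_\chi \otimes Gr^G_\chi \otimes V \otimes W$, and then applies the multiplication $m: Gr^G_\chi \otimes Gr^G_\chi \to Gr^G_\chi$. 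Naturality of $\phi$ follows from naturality of $c^C$ and of $m$. The two unit diagrams reduce to the left and right unitality of the algebra $Gr^G_\chi$; the hexagon-like diagram from Figure \ref{fig:monoidal functor1} reduces to the associativity of $Gr^G_\chi$ in $\mathbf{Vec_\mathbb{K}(G,\chi)}$, once one uses the hexagon axioms for $c^C$ to rearrange the three insertions of the braiding along a triple tensor product.

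The main obstacle I expect is precisely that last diagram chase: starting from $(Gr^G_\chi \otimes V)\otimes(Gr^G_\chi \otimes W)\otimes(Gr^G_\chi \otimes Z)$, the two routes around Figure \ref{fig:monoidal functor1} differ in the order in which three copies of $Gr^G_\chi$ are multiplied and in which braidings are applied, and equating them requires combining the pentagon for $\alpha$, both hexagons for $c^C$, and the associativity of $m$. This is really the statement that tensoring with an algebra object in a symmetric monoidal category produces a monoidal functor; the proof is routine coherence bookkeeping, but it is the only non-trivial check among the three functors.
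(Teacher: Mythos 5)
Your handling of $Forget$ and $(-)_0$ coincides with the paper's: identities for the former, and the inclusion $V_0\otimes W_0\hookrightarrow(V\otimes W)_0$ together with $r=\mathrm{id}_{\mathbb{K}}$ for the latter. The one substantive divergence is your structure map for $Gr^G_\chi$. You take the categorically canonical $\phi=(m\otimes \mathrm{id})\circ(\mathrm{id}\otimes c^C\otimes \mathrm{id})$ for tensoring with an algebra object, which on elements reads $(g\otimes v)\otimes(h\otimes w)\mapsto\chi(|v|,|h|)\,gh\otimes(v\otimes w)$; the paper instead uses the untwisted concatenation $(g\otimes v)\otimes(h\otimes w)\mapsto gh\otimes(v\otimes w)$ with no braiding factor. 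Both maps are degree-preserving, natural, and satisfy the three monoidal-functor diagrams (your coherence argument via the hexagons and associativity of $m$ is the standard one and is sound; the paper's version is even easier because no $\chi$-factors appear), so either choice proves the proposition as literally stated.

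The caveat is that these are genuinely different monoidal structures on $Gr^G_\chi$, and only the paper's is compatible with what follows. The next proposition shows that $Env^G_\chi=(-)_0\circ Gr^G_\chi\circ Forget$ is a \emph{symmetric} monoidal functor, and that verification uses precisely $i(g\otimes v,h\otimes w)=gh\otimes(v\otimes w)$: on the degree-$e$ part one has $|v|=|g|^{-1}$ and $|w|=|h|^{-1}$, so $Env^G_\chi(c)(i(t_1))=\chi(g,h)\,gh\otimes(w\otimes v)$, which equals $i(c(t_1))=hg\otimes(w\otimes v)$ because $hg=\chi(g,h)gh$ in $Gr^G_\chi$. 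If you run the same check with your twisted $\phi$, the first route yields $gh\otimes(w\otimes v)$ while the second yields $\chi(g,h)^2\,gh\otimes(w\otimes v)$, and $\chi(g,h)^2\neq 1$ in general (it is $1$ only when $\chi$ takes values in $\{\pm1\}$, e.g.\ the super case). So your proof of this proposition stands, but you should switch to the untwisted $\phi$, or explicitly reconcile the two, before feeding the result into the construction of $Env^G_\chi$.
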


\begin{proof}
It is widely known that $(-)_0:\mathbf{Vec(G,\chi)}\to\mathbf{Vec}$ is an actual functor and is an easy exercise to check that $i:V_0\otimes W_0\to (V\otimes W)_0$ is a natural transformation between the functors $(-)_0\otimes (-)_0$ and $(-\otimes -)_0$; where $i$ is defined as the inclusion since $(V\otimes W)_0=\bigoplus_{g\in G}V_g\otimes W_{g^{-1}}$. Hence we only need to prove that it is monoidal with the natural transformation $i$ and morphism $\phi=id(\mathbb{K}):\mathbb{K}\to\mathbb{K}$.

This means that the diagrams \ref{fig:monoidal functor1}, \ref{fig:monoidal functor2} and \ref{fig:monoidal functor3} must commute. This is also a trivial exercise.

Trivially, $Forget:\mathbf{Vec(G,\chi)}\to\mathbf{Vec(G,1)}$ is a functor. It is monoidal since $Forget(V\otimes W)=Forget(V)\otimes Forget(W)$ and $Forget(\mathbb{K})=\mathbb{K}$.

To prove that $Gr^G_\chi: \mathbf{Vec(G,1)}\to \mathbf{Vec(G,\chi)}$ is monoidal we need to find an appropiate $i:Gr^G_\chi(V)\otimes Gr^G_\chi(W)\to Gr^G_\chi(V\otimes W)$ and $\phi:\mathbb{K}\to Gr^G_\chi(\mathbb{K})$ that make the diagrams \ref{fig:monoidal functor1}, \ref{fig:monoidal functor2} and \ref{fig:monoidal functor3} commute.

This mappings are given by $i(g\otimes v,h\otimes w)=(gh)\otimes (v\otimes w)$ and $\phi(a)=1\otimes a$.
\end{proof}

\begin{defn}\label{Colenv}
Define then $Env_\chi^{G}=(-)_0\circ Gr^G_\chi\circ Forget:\mathbf{Vec(G,\chi)}_\mathbb{K}\to\mathbf{Vec}_\mathbb{K}$.
\end{defn}

\begin{prop}
The functor $Env^G_\chi$ is symmetric monoidal with associated natural transformations $i:Env^G_\chi(V)\otimes Env^G_\chi(W)\to Env^G_\chi(V\otimes W)$ given by $i(g\otimes v,h\otimes w)=gh\otimes (v\otimes w)$ and $\phi:\mathbb{K}\to Env^G_\chi(\mathbb{K})$ given by $\phi(a)=1\otimes a$.
\end{prop}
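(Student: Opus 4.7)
The plan is to exploit the preceding proposition: since $(-)_0$, $Forget$, and $Gr^G_\chi$ have all been shown to be monoidal functors, the composition $Env^G_\chi=(-)_0\circ Gr^G_\chi\circ Forget$ inherits a monoidal structure by composing the natural transformations factor by factor. First I would write down explicitly the candidate $i$ and $\phi$ produced by this composition and check they agree with the formulas in the statement. Since $Forget$ is strict monoidal, it contributes identities; and $Gr^G_\chi$ has $i_{Gr}((g\otimes v)\otimes(h\otimes w))=gh\otimes(v\otimes w)$ and $\phi_{Gr}(a)=1\otimes a$; and $(-)_0$ is inclusion of the degree-$e$ component. Composing these yields the formulas stated, and the pentagon/triangle coherence diagrams \ref{fig:monoidal functor1}, \ref{fig:monoidal functor2}, \ref{fig:monoidal functor3} follow automatically from the coherence for each factor.

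The substantive content is the \emph{symmetric} part, which does not follow formally from the previous proposition because neither $Gr^G_\chi$ nor $Forget$ was claimed to be braided. Since the target category $\mathbf{Vec}_\mathbb{K}$ carries the trivial flip, checking the diagram \ref{fig:braided monoidal functor} amounts to showing that for homogeneous elements $a\otimes v\in Env^G_\chi(V)$ and $b\otimes w\in Env^G_\chi(W)$ (with $v\in V_g$, $w\in W_h$, $a\in(Gr^G_\chi)_{g^{-1}}$, $b\in(Gr^G_\chi)_{h^{-1}}$), applying $Env^G_\chi(c^C_{V,W})\circ i$ agrees with $i$ applied after the trivial swap in $\mathbf{Vec}$. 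Unwinding both sides, the left side produces $\chi(g,h)\,ab\otimes(w\otimes v)$ and the right side produces $ba\otimes(w\otimes v)$, so everything reduces to verifying $\chi(g,h)\,ab=ba$ in $Gr^G_\chi$.

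To establish this identity I would use Definition \ref{freeCommu}: $Gr^G_\chi$ is commutative in $\mathbf{Vec_\mathbb{K}(G,\chi^{op})}$, meaning the defining ideal forces $xy=\chi^{op}(|y|,|x|)\,yx=\chi(|x|,|y|)\,yx$ for homogeneous $x,y$. Applied to $a$ of degree $g^{-1}$ and $b$ of degree $h^{-1}$, and using the bicharacter identities $\chi(g^{-1},h^{-1})=\chi(g,h)$ (a short computation from bimultiplicativity), this yields $ab=\chi(h,g)\,ba$. Multiplying by $\chi(g,h)$ and invoking the skew-symmetry $\chi(g,h)\chi(h,g)=1$ delivers the required equality.

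The main obstacle I expect is purely bookkeeping: keeping straight which factor lives in which degree, tracking the inversion $g\mapsto g^{-1}$ induced by the $(-)_0$ restriction, and correctly using $\chi^{op}$ (rather than $\chi$) for the Grassmann commutation. Once these sign/grade conventions are fixed, every diagram reduces to the skew-symmetry of the bicharacter, and nothing deeper is needed.
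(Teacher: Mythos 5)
Your proposal is correct and follows essentially the same route as the paper: monoidality is inherited from the composition of the three monoidal functors, and the symmetric diagram is verified on homogeneous elements, where both composites reduce to the single identity $\chi(g,h)\,ab=ba$ in $Gr^G_\chi$, established from the Grassmann commutation relation together with the skew-symmetry $\chi(g,h)\chi(h,g)=1$. One small transcription caveat: Definition \ref{freeCommu} actually yields $xy=\chi(|y|,|x|)\,yx$ for homogeneous elements (not $\chi(|x|,|y|)\,yx$ as you write in the intermediate step), but your specialized conclusion $ab=\chi(h,g)\,ba$ is the correct one, so the argument goes through unchanged.
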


\begin{proof}
By definition, $Env^G_\chi$ is a monoidal functor since it is composition of monoidal functors. It rests to show that it is in fact symmetric.

Let $t_1=(g\otimes v)\otimes (h\otimes w)\in Env^G_\chi(V)\otimes Env^G_\chi(W)$; then $t_2=i(t_1)=gh\otimes (v\otimes w)\in Env^G_\chi(V\otimes W)$. Hence $Env^G_\chi(c)(t_2)=gh\otimes (\chi(|v|,|w|)(w\otimes v))=gh\otimes(\chi(g^{-1},h^{-1})(w\otimes v))=\chi(g,h)gh\otimes (w\otimes v)$.

On the other hand, $t_3=c(t_1)=(h\otimes w)\otimes (g\otimes v)$ and $i(t_3)=hg\otimes (w\otimes v)=\chi(g,h)gh\otimes(w\otimes v)$.

In conclusion the diagram \ref{fig:braided monoidal functor} commutes.
\end{proof}

The functor  $Env^\chi_{G}$ reduces to the grassmann envelope in the case of $\mathbf{SVec}$ and is symmetric monoidal, hence algebras in $\mathbf{Vec(G,\chi)}$ are transported to algebras in $\mathbf{Vec}$.

The trick of this functor is translating the grading and the symmetry from the categorical structure to the algebraic structure, so the categorial structure transforms to $\mathbf{Vec}$ without forgeting the group $G$ and its bicharacter $\chi$ that can be recovered from the algebraic structure.






\section{Algebraic Definitions}

In this section, we define what it is meant as an algebra in some category  $\mathbf{Vec_\mathbb{K}(G,\chi)}$ and some definitions related to them.

\subsection{Introduction}

An algebraic structure in some category is defined as expected:

\begin{defn}
In a category $\mathcal{C}$ with a tensor product $\otimes$, an algebra is defined as an object $A$ and a family of morphisms $\{f_i:A^{\otimes n_i}\to A\}$.
\end{defn}

The most interesting categories are those which are symmetric monoidal which let us define identities with the use of the action of the symmetric groups. In fact, our previously defined monoidal functor $Env^G_\chi$ lets us give a definition of some algebra belonging to a variety:

\begin{defn}
Given a class of algebras $\mathcal{D}$ in $\mathbf{Vec}$, we will define the class of algebras $\mathcal{D}_{G,\chi}$ as the algebras $(B,\{f_i:B^{\otimes_\chi n_i}\to B\})$  in the category $\mathbf{Vec_\mathbb{K}(G,\chi)}$ such that $(Env^G_\chi(B),\{Env^G_\chi(f_i)\circ i^{n_i}: Env^G_\chi(B)^{\otimes n_i}\to Env^G_\chi(B)\}$ is an algebra in the class $\mathcal{D}$.

In this case $i^{n_i}: Env^G_\chi(B)^{\otimes n_i}\to Env^G_\chi(B^{\otimes_\chi n_i})$ is the natural transformation resulting of applying $i:Env^G_\chi(A)\otimes Env^G_\chi(B) \to Env^G_\chi(A\otimes_\chi B)$ several times; for $A=B^{\otimes q}$ where  $q\in\mathbb{N}$.
\end{defn}

\begin{lem}\label{Recon}
Let $\mathcal{D,C}$ be two classes of algebras in the category of vector spaces and $\mathcal{D}_{G,\chi},\mathcal{C}_{G,\chi}$ the classes defined previously in the category $\mathbf{Vec_\mathbb{K}(G,\chi)}$. Let $R_{G,\chi}:\mathcal{C}_{G,\chi}\to\mathcal{D}_{G,\chi}$ be a linear functor defined for every color. If there is a linear natural transformation $N_{G,\chi}:R_{\{0\},1}\circ  Env_{G}^\chi\to Env_{G}^\chi\circ R_{G,\chi}$ defined for every color, then any polynomial equation fulfilled by $R_{\{0\},1}(Env_\chi^G(M))$ may give at least one equation that is fulfilled by $R_{G,\chi}(M)$ for every algebra $M$ in $\mathcal{C}_{G,\chi}$; in case $\forall y\in R_{G,\chi}(M),\ \exists g\in Gr^G_\chi$ such that $g\otimes y\in im(N_{G,\chi})$.
\end{lem}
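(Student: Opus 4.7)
The plan is to pull the given identity back along the natural transformation $N_{G,\chi}$ and then strip off the Grassmann factor. As a preliminary reduction, because the ground field has characteristic zero any polynomial identity is equivalent to its full multilinearisation, so I shall assume that $P(x_1,\dots,x_n)$ is multilinear in its $n$ variables.

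Fix homogeneous elements $y_1,\dots,y_n\in R_{G,\chi}(M)$. By the standing hypothesis, each product $g_i\otimes y_i$ lies in $im(N_{G,\chi})$, so choose preimages $x_i\in R_{\{0\},1}(Env^G_\chi(M))$ with $N_{G,\chi}(x_i)=g_i\otimes y_i$. Since $P(x_1,\dots,x_n)=0$ inside $R_{\{0\},1}(Env^G_\chi(M))$ and $N_{G,\chi}$, being a natural transformation between algebra-valued functors, respects the algebraic operations, the image $N_{G,\chi}(P(x_1,\dots,x_n))=P(g_1\otimes y_1,\dots,g_n\otimes y_n)$ must vanish in $Env^G_\chi(R_{G,\chi}(M))$.

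Next I would unwind what that expression looks like inside $Env^G_\chi(R_{G,\chi}(M))\subseteq Gr^G_\chi\otimes R_{G,\chi}(M)$. Using the monoidal coherence $i(g\otimes v,h\otimes w)=gh\otimes(v\otimes w)$ together with the braiding $c^{C}$ of $\mathbf{Vec(G,\chi)}$, each monomial $x_{j_1}\cdots x_{j_k}$ of $P$ evaluates to $g_{j_1}\cdots g_{j_k}\otimes\chi(\sigma)\,y_{j_1}\cdots y_{j_k}$, where the scalar $\chi(\sigma)$ records the transpositions needed to commute the Grassmann factors past the $R_{G,\chi}(M)$-factors. Collecting monomials yields an equality of the form $(g_1\cdots g_n)\otimes P^{\chi}(y_1,\dots,y_n)=0$, where $P^{\chi}$ is a specific $\chi$-deformation of $P$ determined by the $G$-degrees of the $y_i$ and by the multiplication pattern of each monomial.

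To finish I would choose each $g_i$ as a fresh generator $x_{|y_i|^{-1}}^{k_i}\in Gr^G_\chi$ with mutually distinct superscripts $k_i$, so that $g_1\cdots g_n$ is a non-zero monomial in $Gr^G_\chi$ involving only independent generators; then the map $z\mapsto (g_1\cdots g_n)\otimes z$ is injective on $R_{G,\chi}(M)$, and the previous equation forces $P^{\chi}(y_1,\dots,y_n)=0$, as required. The delicate point, and the reason for the hedge ``at least one equation'', is that $P^{\chi}$ genuinely depends on the chosen $G$-degrees $|y_i|$, so a single ungraded identity $P$ typically yields a whole family of graded identities; the careful bookkeeping of the $\chi$-scalars attached to each monomial is the only non-formal step in the argument.
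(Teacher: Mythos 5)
Your proof follows the same route as the paper's: push the identity through $N_{G,\chi}$ using that it is an algebra morphism, expand the result inside $Gr^G_\chi\otimes R_{G,\chi}(M)$, and use the freeness of the Grassmann factor to strip it off and leave an identity on $R_{G,\chi}(M)$. The only differences are refinements of detail --- you multilinearise first so that everything collects onto the single monomial $g_1\cdots g_n$ (the paper instead keeps a general sum $\sum_i h_i\otimes p_i$ and extracts the several identities $p_i=0$, deferring the multilinear case to the observation that follows the lemma), and you make explicit the choice of independent Grassmann generators that the paper leaves implicit in its unjustified assertion that the $h_i$ may be taken linearly independent.
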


\begin{proof}
Let $\theta(x_1,\ldots,x_n)=0$ be some polynomial equation fullfiled by $R(Env^\chi_{G}(M))$. Then $N_M:R\circ Env^\chi_{G}(M)\to Env^\chi_{G}\circ R_{G,\chi}(M)$ is a morphism of algebras given by the natural transformation.

It follows then that $N_M\circ \theta(x_1,\ldots,x_n)=0$. Since $N_M$ is morphism and $\theta$ is polynomial, there exists $\bar{\theta}$ such that $\bar{\theta}(N_M(x_1),\ldots,N_M(x_n))=0$.

We can choose elements $x_i$ such that $N_M(x_i)=g_i\otimes y_i$ for any $y_i\in R_{G,\chi}(M)$, by the last hypothesis.

Expanding the equation, $$\bar{\theta}(N_M(x_1),\ldots,N_M(x_n))=$$ $$\bar{\theta}(g_1\otimes y_1,\dots, g_n\otimes y_n)=\sum_{i}h_i\otimes p_i=0$$ where $p_i$ are ponomials in $R_{G,\chi}(M)$ valued in $(y_1,\ldots,y_n)$ and the set $\{h_i\}$ is linearly independent. Then $p_i=0$ for every $i$ by linear independency. Hence the polynomials $p_i=0$ are polynomial equations for $R_{G,\chi}(M)$.
\end{proof}

\begin{rem}
Iif the polynomial equations are $n$-linear; i.e. $$\theta(x_1,\ldots,\lambda x_i,\ldots,x_n)=\lambda\theta(x_1,\ldots,x_i,\ldots,x_n),$$  the result of the process gives one polynomial equation. In almost all examples in this paper, this is the case.
\end{rem}

An example of this construction:

\begin{prop}
Consider $\operatorname{U}$ the enveloping algebra of a color Lie algebra; and $U$ the enveloping algebra of a Lie algebra, then there is a natural transformation $n:U\circ Env^G_\chi\to Env^G_\chi\circ \operatorname{U}$ such that $\forall y\in \operatorname{U}(L),\ \exists g\in Gr_\chi$ with $g\otimes y\in n_L(U(Env^G_\chi(L)))$ for any $L$ Lie color algebra.
\end{prop}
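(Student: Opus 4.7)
The plan is to construct $n_L$ via the universal property of $U$, deduce naturality formally, and verify the Grassmann absorption condition by a PBW computation on homogeneous basis elements of $\operatorname{U}(L)$.

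First I would construct $n_L$. Since $Env^G_\chi$ is a symmetric monoidal functor (by the preceding proposition), it sends the Lie color algebra $L$ to an ordinary Lie algebra $Env^G_\chi(L)$ and the color associative algebra $\operatorname{U}(L)$ to an ordinary associative algebra $Env^G_\chi(\operatorname{U}(L))$; compatibility with the symmetries ensures that the bracket induced on $Env^G_\chi(\operatorname{U}(L))$ from the color commutator of $\operatorname{U}(L)$ is precisely the ordinary commutator. Hence the canonical inclusion $\iota_L:L\hookrightarrow \operatorname{U}(L)$ (a morphism of Lie color algebras) becomes a Lie algebra morphism $Env^G_\chi(\iota_L):Env^G_\chi(L)\to Env^G_\chi(\operatorname{U}(L))$, and the universal property of $U$ extends it to a unique associative algebra map $n_L:U(Env^G_\chi(L))\to Env^G_\chi(\operatorname{U}(L))$. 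Naturality in $L$ is automatic: for any morphism $f:L\to L'$ of Lie color algebras, the two composites $n_{L'}\circ U(Env^G_\chi(f))$ and $Env^G_\chi(\operatorname{U}(f))\circ n_L$ restrict to the same Lie algebra morphism on the generating subspace $Env^G_\chi(L)$, and by the uniqueness clause they coincide on all of $U(Env^G_\chi(L))$.

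Next I would verify the Grassmann condition. By the PBW theorem for Lie color algebras, $\operatorname{U}(L)$ is spanned as a vector space by ordered monomials $y=l_{\alpha_1}\cdots l_{\alpha_k}$ in a fixed homogeneous basis of $L$. Given such a monomial, pick pairwise distinct Grassmann generators $z_j\in Gr^G_\chi$ with $|z_j|=|l_{\alpha_j}|^{-1}$, which exist because $Gr^G_\chi$ carries a countable family of generators in every nontrivial degree. Each factor $z_j\otimes l_{\alpha_j}$ is of total degree $e$ and hence lies in $Env^G_\chi(L)=(Gr^G_\chi\otimes L)_0$, so the product $(z_1\otimes l_{\alpha_1})\cdots(z_k\otimes l_{\alpha_k})$ is a well-defined element of $U(Env^G_\chi(L))$. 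Applying $n_L$ and unfolding the iterated multiplication in $Env^G_\chi(\operatorname{U}(L))$ through the natural transformation $i$, the color signs picked up when braiding each $l_{\alpha_j}$ past the subsequent $z_{j'}$'s combine into a single scalar $\varepsilon\in\mathbb{K}^{\times}$, and the resulting image is $\varepsilon(z_1\cdots z_k)\otimes y$. Setting $g=\varepsilon^{-1}(z_1\cdots z_k)\in Gr^G_\chi$ yields the required $g\otimes y\in n_L(U(Env^G_\chi(L)))$. For a general homogeneous $y=\sum_i c_iy_i$ the per-monomial witnesses $g_i$ differ, and one manufactures a common $g$ by enlarging each $g_i$ with mutually-dual degree-$e$ padding factors of the form $x^a_h x^b_{h^{-1}}$; this leaves the total $G$-degree unchanged while producing, for each $y_i$, a factorization of $g$ with the prescribed degrees.

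The construction of $n_L$ and its naturality are essentially formal consequences of the universal property and of the symmetric monoidality of $Env^G_\chi$. The main technical point, and the expected obstacle, is the last step of the PBW argument: while finding a witness for a single ordered monomial is a direct color-sign computation, producing a single element of $Gr^G_\chi$ that admits every factorization demanded by the various PBW components of an arbitrary $y$ requires careful bookkeeping with Grassmann degrees, made possible only by the infinite supply of fresh generators in each nontrivial degree of the Grassmann color algebra.
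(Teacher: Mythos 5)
Your construction of $n_L$ coincides with the paper's: the paper simply declares $n_L((g_1\otimes a_1)\cdots(g_n\otimes a_n))=(g_1\cdots g_n)\otimes(a_1\cdots a_n)$ and ``extends as an algebra morphism,'' whereas you supply the justification that declaration actually needs, namely that the ordinary commutator of $g\otimes a$ and $h\otimes b$ in $Env^G_\chi(\operatorname{U}(L))$ equals $gh\otimes[a,b]_\chi$ (the sign works out because $|g|=|a|^{-1}$, $|h|=|b|^{-1}$ and $\chi(|a|^{-1},|b|^{-1})=\chi(|a|,|b|)$), so that $Env^G_\chi(\iota_L)$ is a Lie map and the universal property of $U$ applies; naturality then follows from uniqueness. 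Your verification of the absorption condition for a single PBW monomial (distinct Grassmann generators of the inverse degrees, so that their product is a nonzero basis monomial) is also correct, and goes beyond the paper, which asserts this condition without argument.

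The one step that does not work as written is the manufacture of a common witness for $y=\sum_i c_iy_i$. Padding each $g_i$ with degree-$e$ pairs $x^a_hx^b_{h^{-1}}$ cannot in general make the padded products coincide: distinct-generator monomials of $Gr^G_\chi$ are linearly independent, so $g_1p_1$ and $g_2p_2$ are proportional only if their multisets of generator degrees agree, and adjoining dual pairs does not change the class of that multiset in the free abelian group on $G\setminus\{e\}$ modulo $[h]+[h^{-1}]=0$; two degree sequences $(\alpha,\beta)$ and $(\gamma,\delta)$ with $\alpha\beta=\gamma\delta$ but $\{\alpha,\beta\}\cap\{\gamma^{\pm1},\delta^{\pm1}\}=\emptyset$ (easy to arrange in $G=\mathbb{Z}^2$) already defeat the recipe. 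The conclusion you want is nevertheless true, and the correct mechanism is the one your last clause hints at: you need a single $g$ admitting, for each $i$, \emph{some} factorization $g=g^{(i)}_1\cdots g^{(i)}_{k_i}$ with $|g^{(i)}_j|=|l^{(i)}_j|^{-1}$. Take a common refinement $(\gamma_1,\dots,\gamma_N)$ of the degree sequences of all the $y_i$ (such a refinement exists in any abelian group by interleaving the partial products, exactly as for common refinements of partitions), and set $g=x^1_{\gamma_1}\cdots x^N_{\gamma_N}$ with fresh distinct generators, replacing any $\gamma_j=e$ by the unit; grouping the factors according to the $i$-th block structure exhibits the required factorization. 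Finally, note that for genuinely non-homogeneous $y$ no homogeneous $g$ can put $g\otimes y$ inside $Env^G_\chi(\operatorname{U}(L))=(Gr^G_\chi\otimes \operatorname{U}(L))_0$ at all, so the statement must be read, as you implicitly do, componentwise in the $G$-grading.
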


\begin{proof}
Let $L$ be any Lie color algebra, define $n_L:U(Env^G_\chi(L))\to Env^G_\chi(U(L))$ as $n_L(g\otimes a)=g\otimes a$ and extend as an algebra morphism: $n_L((g_1\otimes a_1)\dots (g_n\otimes a_n))=(g_1\dots g_n)\otimes (a_1\dots a_n)$.

With this definition, $n:U\circ Env^G_\chi\to Env^G_\chi\circ U$ is a natural transformation and $\forall y\in \operatorname{U}(L),\ \exists g\in Gr_\chi$ with $g\otimes y\in n(U(Env^G_\chi(L)))$ for any $L$ Lie color algebra; as we wanted.
\end{proof}

Since this functor, $Env^G_\chi$, is not comonoidal, we cannot recover directly the coproduct for $U(L)$; where $L$ is a Lie color algebra; as we can recover the product from $n$ and $U(Env^G_\chi(L))$.

The coproduct of $U(L)$; for a color algebra $L$, must be a map $\Delta_\chi:U(L)\to U(L)\otimes U(L)$ such that the diagram \ref{fig:coproduct} commutes. Since $Env^G_\chi$ is an injective linear functor, if such a map exists, it is unique.
For it to exists, it must happend that $i\circ(n_L\otimes n_L)\circ\Delta((g_1\otimes a_1)\dots (g_n\otimes a_n))=(g_1\dots g_n)\otimes A$, where then $\Delta_\chi(a_1\dots a_n)=A\in U(L)\otimes U(L)$.

It can be easily shown that everything holds and $\Delta_\chi$ exists; since $i\circ(n_L\otimes n_L)\circ\Delta$ is an algebra morphism and it holds for primitives; which generate the whole algebra.

\begin{figure}
	\centering
\begin{tikzpicture}
\node (P0) at (180-35:3cm) {$\operatorname{U}(Env^G_\chi(L))$};
\node (P1) at (35:3cm) {$\operatorname{U}(Env^G_\chi(L))\otimes \operatorname{U}(Env^G_\chi(L))$} ;
\node (P2) at (35+180:3cm) {$Env^G_\chi(\operatorname{U}(L))$};
\node (P4) at (0:3cm) {$Env^G_\chi(\operatorname{U}(L))\otimes Env^G_\chi(\operatorname{U}(L))$};
\node (P3) at (-35:3cm) {$Env^G_\chi(\operatorname{U}(L)\otimes \operatorname{U}(L))$};
\draw
(P0) edge[->,>=angle 90] node[above] {$\Delta$} (P1)
(P0) edge[->,>=angle 90] node[right] {$n_L$} (P2)
(P1) edge[->,>=angle 90] node[right] {$n_L\otimes n_L$} (P4)
(P4) edge[->,>=angle 90] node[right] {$i$} (P3)
(P2) edge[->,>=angle 90] node[below] {$Env^G_\chi(\Delta_\chi)$} (P3);
\end{tikzpicture}
\caption{Coproduct definition diagramm}
\label{fig:coproduct}
\end{figure}

\subsection{Malcev Algebras}

In this subsection we deal with the definition of Malcev algebra in a color category. To get such an algebra, we only need to apply our definition to the variety of Malcev algebras; and the result is:

\begin{defn}
For a color $(G,\chi)$, define color-alternative algebras as the class of color-algebras $A=\bigoplus_{g\in G}A_g$ such that:
 $\forall a,b,c\in  A$ homogenous elements; $(a,b,c)=-\chi(a,b)(b,a,c)$ and $(a,b,c)=-\chi(b,c)(a,c,b)$ where $(a,b,c)$ is the associator.

Notice that it may happen that $(a,a,b)\ne 0$ in case $\chi(a,a)=-1$.
\end{defn}

Let $(A,a\cdot b)$  be a color-alternative algebra and consider the new product $[a,b]_\chi=ab-\chi(a,b)ba$

then $$[a,b]_\chi=-\chi(a,b)[b,a]_\chi\text{\  \ and}$$
$$[[x,z]_\chi,[y,w]_\chi]_\chi\chi(y,z)=$$ $$[[[x,y]_\chi,z]_\chi,w]_\chi+\chi(x,y)\chi(x,z)\chi(x,w)[[[y,z]_\chi,w]_\chi,x]_\chi+$$ $$\chi(y,z)\chi(x,z)\chi(y,w)\chi(x,w)[[[z,w]_\chi,x]_\chi,y]_\chi+\chi(z,w)\chi(y,w)\chi(x,w)[[[w,x]_\chi,y]_\chi,z]_\chi$$ 

which reduce to the linearization of the Malcev identity or Malcev super-identity chosing the proper color $(G,\chi)=(\mathbb{Z}_2,\chi_s)$, hence the name linearized Malcev color-identity. 

It is important to work with linearized identities because it may happend that $\chi(x,x)=-1$ and hence some of the identities fail to be equivalent in case they permute two copies of the same element. An example is $[J(x,y,z),y]$; which permutes $y$ with itself.

\begin{lem}
The algebras that fulfill the linearized Malcev color-identity and the antisymmetry color-identity, are the Malcev color-algebras.
\end{lem}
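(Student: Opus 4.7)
The plan is to reduce the claim to an explicit unraveling of the definition of Malcev color-algebras through the symmetric monoidal functor $Env^G_\chi$. By the construction of $\mathcal{D}_{G,\chi}$, an algebra $M$ in $\mathbf{Vec_\mathbb{K}(G,\chi)}$ is a Malcev color-algebra exactly when $Env^G_\chi(M)$, with the product induced via the natural transformation $i$, is a classical Malcev algebra in $\mathbf{Vec}$. Over a field of characteristic zero a Malcev algebra is characterized by anti-commutativity $xy+yx=0$ together with the multilinear identity obtained by full polarization of $[J(x,y,z),x]=J(x,y,[x,z])$. It therefore suffices to check that each of these two classical identities on the envelope corresponds term by term to the displayed color identity on $M$.

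I would first make the product on $Env^G_\chi(M)$ explicit: for homogeneous tensors $g\otimes a$ and $h\otimes b$ of total degree $e$, the natural transformation $i$ gives $(g\otimes a)(h\otimes b)=gh\otimes ab$. Invoking the color-commutativity $gh=\chi(|g|,|h|)hg$ in $Gr^G_\chi$ from Definition \ref{freeCommu}, together with $\chi(|g|,|h|)=\chi(|a|,|b|)$ (a consequence of $|g|=|a|^{-1}$, $|h|=|b|^{-1}$ and $\chi(u^{-1},v)=\chi(u,v)^{-1}$), a direct calculation yields
\[
(g\otimes a)(h\otimes b)+(h\otimes b)(g\otimes a)=gh\otimes\bigl(ab+\chi(b,a)ba\bigr).
\]
Since the Grassmann generators $g$ and $h$ may be picked of any prescribed degrees and distinct monomials in $Gr^G_\chi$ are linearly independent, classical anti-commutativity on $Env^G_\chi(M)$ is equivalent to color anti-symmetry on $M$ in both directions.

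The same template handles the linearized Malcev identity on four homogeneous arguments $g_i\otimes x_i$, $i=1,\dots,4$. Each triple-nested commutator appearing in the polarized classical identity produces, on the envelope, a tensor of the form $g_{\sigma(1)}g_{\sigma(2)}g_{\sigma(3)}g_{\sigma(4)}\otimes[\cdots]$ for some $\sigma\in S_4$; normalizing each such Grassmann product back to the fixed order $g_1g_2g_3g_4$ by repeated use of the color-commutativity of $Gr^G_\chi$ introduces exactly the scalar $\chi(x,\sigma)$ of the notational convention fixed just before the present subsection. After this normalization the classical identity on the envelope collapses to $g_1g_2g_3g_4$ tensored with the difference between the two sides of the displayed color identity, and a further appeal to linear independence in $Gr^G_\chi$ closes both directions. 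The main obstacle will be the combinatorial bookkeeping here: one must match each summand of the polarized classical identity with the correct permutation of generators and verify that the four $\chi$-coefficients that appear are precisely those written in the paper. Multilinearity and $\operatorname{char}\mathbb{K}=0$ reduce the verification to homogeneous elements, so once this routine computation is carried out the two varieties coincide.
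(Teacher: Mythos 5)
Your proposal is correct and follows essentially the same route as the paper, whose proof is simply the one-line remark that the lemma is a direct application of the definition of Malcev color-algebras via the functor $Env^G_\chi$; you have merely spelled out the envelope computation (the product $gh\otimes ab$, the degree matching $\chi(|g|,|h|)=\chi(|a|,|b|)$, and the $\chi(x,\sigma)$ bookkeeping) that the paper leaves implicit. The details you supply are accurate and the linear-independence argument in $Gr^G_\chi$ correctly gives both directions of the equivalence.
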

\begin{proof}
Direct application of the definition.
\end{proof}

From any algebra we can obtain a color-alternative algebra:

\begin{defn}
Let $A=\bigoplus_{g\in G} A_g$ be a color-algebra and define $N_{color}(A)=\bigoplus_{g\in G}\{x\in A_g|\ (x,y,z)=-\chi(x,y)(y,x,z)\text{ and }(y,z,x)=-\chi(z,x)(y,x,z),$ $\forall y,z\in\bigcup_{h\in G} A_h\}$. This subspace is called the color-alternative nucleous of $A$.
\end{defn}

\begin{lem}
$N_{color}(A)^-$ is a Malcev color-algebra.
\end{lem}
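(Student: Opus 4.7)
The first step of the plan is to verify the color antisymmetry $[a,b]_\chi = -\chi(a,b)[b,a]_\chi$, which follows immediately from the definition of the color commutator together with the skew-symmetry relation $\chi(a,b)\chi(b,a)=1$. This identity is valid on the whole color algebra $A$, and in particular on the subspace $N_{color}(A)$.

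For the linearized color-Malcev identity, the plan is to invoke Lemma~\ref{Recon}. Set $R_{G,\chi}(A) = N_{color}(A)^-$ with the color bracket, and take as classical counterpart $R_{\{0\},1}(B) = N_{alt}(B)^-$, the ordinary alternative nucleus of an algebra $B\in\mathbf{Vec}$ under the commutator. The classical input I assume is the Kleinfeld-type theorem that $N_{alt}(B)^-$ satisfies the ordinary Malcev identity for every algebra $B$; this extends the classical fact that the commutator on an alternative algebra is Malcev, localised to the alternative nucleus.

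The central work is to construct the natural transformation $n_A: N_{alt}(Env^G_\chi(A))^- \to Env^G_\chi(N_{color}(A)^-)$ required by Lemma~\ref{Recon}, and to verify its covering condition. I would first show that for $a\in N_{color}(A)_h$ and any Grassmann monomial $g\in (Gr^G_\chi)_{h^{-1}}$, the simple tensor $g\otimes a$ lies in $N_{alt}(Env^G_\chi(A))$: expand the envelope associator $(g\otimes a, s\otimes b, t\otimes c)$ using the product formula $(g\otimes a)(s\otimes b)=\chi(a,s)(gs)\otimes(ab)$, factor out the Grassmann and color scalar, and apply the two defining identities of $N_{color}$ to the reduced term $gst\otimes(a,b,c)_A$. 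Conversely, linear independence of Grassmann monomials forces any simple-tensor element $g\otimes a$ of $N_{alt}(Env^G_\chi(A))$ to have $a\in N_{color}(A)$. The natural transformation $n_A$ is then the obvious map on simple tensors, extended linearly; the covering condition reduces to choosing a Grassmann monomial of grade $h^{-1}$ for any $y\in N_{color}(A)_h$, which is always possible by construction of $Gr^G_\chi$.

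The main obstacle is the sign bookkeeping in the envelope associator. After transporting $(a,b,c)_A$ through the color-alternative identities one encounters coefficients of the form $\chi(a,b)^2-1$, arising from the combined Grassmann and color signs, which do not vanish automatically for bicharacters with $\chi(a,b)^2\ne 1$. Handling this forces a simultaneous use of \emph{both} defining identities of $N_{color}(A)$ together with the Grassmann commutation relation $sg=\chi(g,s)gs$, so that the surplus terms can be rewritten as a single Grassmann monomial and cancelled by linear independence. Once this is settled, Lemma~\ref{Recon} transfers the Malcev identity from $N_{alt}(Env^G_\chi(A))^-$ to $N_{color}(A)^-$, which together with antisymmetry completes the verification that $N_{color}(A)^-$ is a Malcev color-algebra.
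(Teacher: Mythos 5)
Your proposal follows essentially the same route as the paper: both arguments identify $N_{alt}(Env^G_\chi(A))$ with $Env^G_\chi(N_{color}(A))$ via the obvious map on simple tensors and then pull the classical fact that the alternative nucleus is Malcev under the commutator back through the envelope (i.e.\ Lemma~\ref{Recon}). The only remark worth adding is that your anticipated ``main obstacle'' does not materialize: the Grassmann and color scalars factor uniformly out of the trilinear associator $(g\otimes a, s\otimes b, t\otimes c)$, so no $\chi(a,b)^2-1$ surplus terms arise and the two nucleus identities transfer term-for-term, exactly as in the paper's short computation.
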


\begin{proof}
It is known that $N_{alt}(Env^G_\chi(A))^-$ is a Malcev algebra.

What we need to prove is that the linearized Malcev color identity is fulfilled by this construction of that $Env^G_\chi(N_{color}(A)^-)$ is a Malcev algebra.

Define the following map: $f:N_{alt}(Env^G_\chi(A))\to Env^G_\chi(A)$ as the inclusion; and consider $g\otimes a\in im(f)$.

Then $g\otimes a\in N_{alt}(Env^G_\chi(A))$ and $\forall h\otimes b,\ q\otimes c$; $(g\otimes a,h\otimes b,q\otimes c)=-(h\otimes b,g\otimes a,q\otimes c)$. In conclusion, $$ghq\otimes (a,b,c)=-hgq\otimes (b,a,c)$$ $$ghq\otimes (a,b,c)=ghq\otimes (-\chi(a,b)(b,a,c))$$

It follows then that $im(f)= Env^G_\chi(N_{color}(A))$.

In conclusion, $f$ is an isomorphism from $N_{alt}(Env^G_\chi(A))$ to $Env^G_\chi(N_{color}(A))$; hence $ Env^G_\chi(N_{color}(A))$ is a Malcev algebra.
\end{proof}

\subsection{Universal Enveloping Algebra}

In this section $\mathbb{K}\{V\}$ indicates the free algebra generated by $V$. 

From a color-Malcev algebra $M$, construct the quotient algebra $U(M)=\mathbb{K}\{M\}/I$ for $I$ the ideal generated by $\langle x\otimes y-\chi(x,y)y\otimes x-[x,y]_\chi,(x,a,b)+\chi(x,a)(a,x,b),(a,x,b)+\chi(x,b)(a,b,x)|\ x,y\in M;\ a,b\in \mathbb{K}\{V\}$ homogeneous elements$\rangle$. Let $i:M\to U(M)$ be $i(x)=x+I$. We shall prove next that $U(M)$ is the functor adjoint to $N_{color}(-)^-$ applied to $M$.

\begin{thm}
Let $(M,[x,y])$ be a Malcev color-algebra. Let also $A$ be an algebra and $f:M\to N_{color}(A)^-$ a Malcev color-algebra morphism. Then exists a unique $U(f):U(M)\to A$ algebra morphism such that $U(f)\circ i = f$.
\end{thm}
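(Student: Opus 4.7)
The plan is to prove the theorem by the standard universal-property argument: extend $f$ to the free algebra, show the extension kills every generator of $I$, and read off uniqueness from the fact that $U(M)$ is generated by $i(M)$.

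First I would invoke the universal property of the free algebra $\mathbb{K}\{M\}$ in $\mathbf{Vec_\mathbb{K}(G,\chi)}$ to extend $f:M\to A$ to an algebra morphism $\bar f:\mathbb{K}\{M\}\to A$ with $\bar f|_M=f$. Since $f$ is a morphism of $G$-graded spaces, $\bar f$ is automatically degree-preserving, which is what makes the bicharacter factors $\chi(x,a)$ etc.\ meaningful when applied to $\bar f(a)$.

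The main work is to check $\bar f(I)=0$, handling the three families of generators of $I$ in turn. For generators of the form $x\otimes y-\chi(x,y)\,y\otimes x-[x,y]_\chi$ with $x,y\in M$, the hypothesis that $f$ is a Malcev color-algebra morphism into $N_{color}(A)^-$ gives $f([x,y]_\chi)=f(x)f(y)-\chi(x,y)f(y)f(x)$, so $\bar f$ annihilates this generator. For generators of the form $(x,a,b)+\chi(x,a)(a,x,b)$ with $x\in M$ and $a,b$ homogeneous in $\mathbb{K}\{M\}$, I would use that $f(x)\in N_{color}(A)$; by definition of the color-alternative nucleus, $(f(x),u,v)=-\chi(x,u)(u,f(x),v)$ for all homogeneous $u,v\in A$. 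Taking $u=\bar f(a)$ and $v=\bar f(b)$ — which are homogeneous of the same degrees as $a,b$ because $\bar f$ preserves grading — yields $\bar f\bigl((x,a,b)+\chi(x,a)(a,x,b)\bigr)=0$. The third family is treated identically using the second defining identity of $N_{color}(A)$. By linearity in $a$ and $b$, this reduction to homogeneous $a,b$ suffices.

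Consequently $\bar f$ descends to an algebra morphism $U(f):U(M)\to A$ satisfying $U(f)\circ i=f$. Uniqueness is immediate: $U(M)$ is generated as an algebra by $i(M)$, so any algebra morphism $U(M)\to A$ extending $f$ is forced to coincide with $U(f)$ on generators, hence everywhere.

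The only real subtlety I anticipate is the grading bookkeeping — ensuring that $\chi(x,a)$ is well-defined (needing $a$ homogeneous) and that $\bar f(a)$ lies in the correct homogeneous component of $A$ so that the nucleus identity applies with the expected scalar. Both concerns dissolve once one observes that $\mathbb{K}\{M\}$ inherits a natural $G$-grading from $M$ and that $\bar f$ respects it, so the argument can be carried out on homogeneous generators and extended by multilinearity.
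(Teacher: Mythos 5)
Your proposal is correct and follows the same route as the paper's own proof: extend $f$ to the free algebra $\mathbb{K}\{M\}$, verify that the extension annihilates the generators of $I$ (using the Malcev-morphism property for the commutator relations and the defining identities of $N_{color}(A)$ for the associator relations), descend to $U(M)$, and deduce uniqueness from the fact that $i(M)$ generates $U(M)$. The paper compresses the verification that $I\subseteq\ker F$ into a single sentence, whereas you spell it out generator by generator; the content is the same.
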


\begin{proof}
Define $F:\mathbb{K}\{M\}\to A$ as an algebra morphism and $F|_M=f$; which can be done in a unique way. In conclusion we can define $\widetilde{F}:U(M)\to A$ as $\widetilde{F}(x+I)=F(x)$ for $x\in \mathbb{K}\{M\}$, because $I\subseteq ker(F)$ by the definition of $im(f)\subseteq N_{color}(A)^-$ as a Malcev color-algebra and $f$ as a Malcev color-algebra morphism.
$\widetilde{F}\circ i=f$ by definition and concluding: $U(f)=\widetilde{F}$.

Uniqueness follows from the fact that $\overline{M}$ generates $U(M)$ as an algebra, and hence $U(f)$ is completely defined by its image on $M$.
\end{proof}

\begin{cor}
$Hom_{alg}(U(M),A)\cong Hom_{malcev}(M,N_{color}(A)^-)$
\end{cor}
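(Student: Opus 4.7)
The plan is to package the universal property stated in the preceding theorem into a natural bijection of Hom-sets; this is standard once one verifies that the canonical map $i:M\to U(M)$ actually factors through $N_{color}(U(M))^-$, which lets us compose in the backward direction.

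First I would define the forward map $\Phi:Hom_{malcev}(M,N_{color}(A)^-)\to Hom_{alg}(U(M),A)$ by $\Phi(f)=U(f)$, the unique algebra morphism produced by the previous theorem. For the backward map $\Psi:Hom_{alg}(U(M),A)\to Hom_{malcev}(M,N_{color}(A)^-)$, given an algebra morphism $\phi:U(M)\to A$, I would set $\Psi(\phi)=\phi\circ i$; to make sense of this as a morphism of Malcev color-algebras into $N_{color}(A)^-$, I need to know that $i(M)\subseteq N_{color}(U(M))^-$ and that $i$ itself is a Malcev color-algebra morphism.

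The key observation, which is the only non-formal step, is that the defining relations of the ideal $I$ exactly enforce the two conditions defining $N_{color}$: the generators $(x,a,b)+\chi(x,a)(a,x,b)$ and $(a,x,b)+\chi(x,b)(a,b,x)$ (for $x\in M$ and $a,b$ arbitrary) force $i(x)$ to lie in the color-alternative nucleus of $U(M)$, while the generator $x\otimes y-\chi(x,y)y\otimes x-[x,y]_\chi$ forces $i$ to intertwine the Malcev bracket of $M$ with the commutator bracket $[-,-]_\chi$ on $N_{color}(U(M))^-$. Hence $i:M\to N_{color}(U(M))^-$ is a Malcev color-algebra morphism, and the composition $\phi\circ i$ indeed lands in $N_{color}(A)^-$ because algebra morphisms send the nucleus into the nucleus and commute with $[-,-]_\chi$.

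Finally I would verify $\Psi\circ\Phi=id$ and $\Phi\circ\Psi=id$. The first equality is exactly the identity $U(f)\circ i=f$ from the theorem. For the second, given $\phi:U(M)\to A$, both $\phi$ and $U(\phi\circ i)$ are algebra morphisms $U(M)\to A$ whose restrictions to $i(M)$ coincide; since $i(M)$ generates $U(M)$ as an algebra (the uniqueness clause of the theorem), they must agree. The main obstacle, such as it is, is checking that $i$ is a Malcev color-algebra morphism into $N_{color}(U(M))^-$, and this is precisely what the relations in $I$ are designed to guarantee; naturality of the bijection in $A$ is then immediate from the construction.
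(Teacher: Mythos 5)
Your overall strategy --- packaging the preceding theorem into a Hom-set bijection by sending $f\mapsto U(f)$ forward and $\phi\mapsto\phi\circ i$ backward --- is the standard one, and most of the verifications are fine: $\Psi\circ\Phi=id$ is exactly the identity $U(f)\circ i=f$, $\Phi\circ\Psi=id$ follows because $i(M)$ generates $U(M)$ as an algebra, and your observation that the generators of $I$ force $i(M)\subseteq N_{color}(U(M))^-$ and make $i$ a Malcev color-algebra morphism is correct, since those relations are imposed with $a,b$ ranging over all of $\mathbb{K}\{M\}$. (The paper itself offers no proof of the corollary, so you are supplying the missing argument.)

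The gap is the assertion that ``algebra morphisms send the nucleus into the nucleus,'' which is what you use to conclude that $\phi\circ i$ lands in $N_{color}(A)^-$. This is false in general: membership of $x$ in $N_{color}(A)$ is a condition quantified over \emph{all} $y,z\in A$, namely $(x,y,z)=-\chi(x,y)(y,x,z)$ and $(y,z,x)=-\chi(z,x)(y,x,z)$, whereas an algebra morphism $\phi:U(M)\to A$ only transports these identities to pairs $y,z$ lying in $im(\phi)$. If $\phi$ is not surjective, nothing forces $\phi(i(x))$ to associate correctly with elements of $A$ outside the image. Concretely, for the one-dimensional abelian Malcev algebra $M=\mathbb{K}x$ (trivial color) one has $U(M)=\mathbb{K}[x]$, and $x\mapsto e$ defines an algebra morphism $U(M)\to A$ for any idempotent $e$ of any algebra $A$; taking $A=\langle e,v,w\rangle$ with $e^2=e$, $vw=e$ and all other products zero gives $(e,v,w)=-e\neq 0=-(v,e,w)$, so $e\notin N_{color}(A)$ and this morphism is not of the form $U(f)$. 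Thus what your argument actually establishes is a bijection between $Hom_{malcev}(M,N_{color}(A)^-)$ and the subset of those $\phi\in Hom_{alg}(U(M),A)$ satisfying $\phi(i(M))\subseteq N_{color}(A)$; to obtain the corollary as literally stated one needs either an additional hypothesis restricting the class of algebras $A$ (or of morphisms) or a genuinely new argument showing that every algebra morphism out of $U(M)$ carries $i(M)$ into $N_{color}(A)$, and neither you nor the paper provides one.
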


This construction as the adjoint functor through a quotion of algebras doesn't let us check wether $i(M)\cap I=\{0\}$ or not. The last part of this paper solves this problem.
In the case of Malcev algebras, this problem has been solved in \cite{MalcevUni} and $U(M)$ has a bialgebra structure called Hopf-Moufang algebra.

This class of bialgebras have been defined over the category of vector spaces. First we will define $H$-bialgebra and afterwards Hopf-Moufang algebras as a subclass of this algebras.

\begin{thm}\label{HomAlg}
In a braided closed monoidal $\mathbb{K}$-linear category $\mathcal{C}$, let $(H,\Delta,\epsilon,\mu)$ be a coassociative, counital bialgebra; then $\mathcal{C}(H,[H,H])$ is an associative unital algebra given the product $a*b=\bullet\circ(a\otimes b)\circ\Delta$; where $\bullet:[H,H]\otimes [H,H]\to [H,H]$ is the composition map.
\end{thm}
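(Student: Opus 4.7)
The plan is to recognize the statement as the well-known convolution algebra construction lifted from $\mathbf{Vec}$ to an arbitrary monoidal closed $\mathbb{K}$-linear category. Under the closed structure, $[H,H]$ is a monoid object in $\mathcal{C}$: the composition $\bullet$ is associative by the first enriched-category axiom (Figure~\ref{fig:enriched1}) and has unit $u_H:\mathbb{I}\to[H,H]$ by Figures~\ref{fig:enriched2} and \ref{fig:enriched3}. Since $(H,\Delta,\epsilon)$ is a coassociative, counital coalgebra, it is standard folklore that $\mathcal{C}(H,[H,H])$ inherits an associative unital algebra structure via convolution; what follows is essentially a mechanical verification of that fact.

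For associativity I would expand
\[(a*b)*c = \bullet\circ(\bullet\otimes id_{[H,H]})\circ((a\otimes b)\otimes c)\circ(\Delta\otimes id_H)\circ\Delta,\]
rearrange the middle tensor using bifunctoriality of $\otimes$ and the associator of $\mathcal{C}$, then apply coassociativity of $\Delta$ on the right-hand factor and associativity of $\bullet$ on the left-hand factor to obtain
\[\bullet\circ(id_{[H,H]}\otimes\bullet)\circ(a\otimes(b\otimes c))\circ(id_H\otimes\Delta)\circ\Delta = a*(b*c).\]

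For unitality I would take $e := u_H\circ\epsilon : H\to[H,H]$. Expanding and regrouping,
\[e*a = \bullet\circ(u_H\otimes id_{[H,H]})\circ(id_{\mathbb{I}}\otimes a)\circ(\epsilon\otimes id_H)\circ\Delta.\]
Figure~\ref{fig:enriched2} supplies $\bullet\circ(u_H\otimes id_{[H,H]}) = \lambda_{[H,H]}$; naturality of $\lambda$ then pulls $a$ outside as $a\circ\lambda_H$; and counitality $\lambda_H\circ(\epsilon\otimes id_H)\circ\Delta = id_H$ finishes $e*a = a$. The identity $a*e=a$ is symmetric, using Figure~\ref{fig:enriched3} and $\rho$ in place of Figure~\ref{fig:enriched2} and $\lambda$. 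Bilinearity of $*$ over $\mathbb{K}$ is automatic from the $\mathbb{K}$-linearity of composition and tensor in $\mathcal{C}$.

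The main obstacle is purely bookkeeping: $\mathcal{C}$ is not assumed strict, so every associator and unitor must in principle be tracked through each diagram. The cleanest workaround is to invoke the coherence theorem for monoidal categories and reason as if $\mathcal{C}$ were strict, which reduces the verification to the equational manipulations sketched above. It is worth noting that neither the braiding of $\mathcal{C}$ nor the algebra part of the bialgebra structure on $H$ enters the argument; only the monoidal closed structure and the coalgebra data $(\Delta,\epsilon)$ are used, so the hypothesis "bialgebra" in the statement is stronger than strictly necessary.
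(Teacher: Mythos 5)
Your proposal is correct and takes essentially the same route as the paper: the same expansion of $(a*b)*c$ and $a*(b*c)$ reduced to coassociativity of $\Delta$ and associativity of $\bullet$, and the same unit (your $e=u_H\circ\epsilon$ is exactly the adjoint transpose of the paper's $T=\lambda\circ(\epsilon\otimes id)$). Your closing observation that neither the braiding nor the multiplication $\mu$ is used is accurate and consistent with the paper's argument.
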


\begin{proof}
Let $a,b,c:H\to [H,H]$, then we need to show that $(a*b)*c=a*(b*c)$ and to find an unit.

$$(a*b)*c=(\bullet\circ(a\otimes b)\circ\Delta)*c=$$
$$\bullet\circ((\bullet\circ(a\otimes b)\circ\Delta)\otimes c)\circ \Delta=$$
$$\bullet\circ(\bullet\otimes id)\circ((a\otimes b)\otimes c)\circ(\Delta\otimes id)\circ \Delta$$

$$a*(b*c)=a*(\bullet\circ(b\otimes c)\circ\Delta)=$$
$$\bullet\circ(a\otimes(\bullet\circ(b\otimes c)\circ\Delta))\circ \Delta=$$
$$\bullet\circ(id\otimes\bullet)\circ(a\otimes(b\otimes c))\circ(id\otimes\Delta)\circ \Delta$$

The result follows from the associativity of composition, tensor products and the coassociativity of the coproduct.

$T=\lambda\circ(\epsilon\otimes id):H\otimes H\to H$ and by the adjoint functors $-\otimes H$ and $[H,-]$ there is a unique map $u:H\to [H, H]$ such that $T=ev\circ(u\otimes id)$. By the definition of counit $T\circ\Delta=id:H\to H$.

Similarly to the associativity computations, one realizes that $u*a=a*u=a$ by use of the definition of the composition $\bullet:[H,H]\otimes [H,H]\to[H,H]$ and the uniqueness of maps by the universal properties of adjoint functors.
\end{proof}

\begin{defn}
In a symmetric closed monoidal $\mathbb{K}$-linear category, an algebra $(H,\mu)$ has two maps, called adjoint maps $L:H\to [H,H]$ and $R:H\to [H,H]$ that make the diagram \ref{fig:adj} commutes.

\begin{figure}
	\centering
\begin{tikzpicture}
\node (P0) at (180:1.5cm) {$H\otimes H$};
\node (P1) at (90:2cm) {$ [H,H] \otimes H$} ;
\node (P3) at (0:1.5cm) {$H$};
\node (P4) at (270:2cm) {$H\otimes[H,H]$} ;
\draw
(P0) edge[->,>=angle 90] node[left] {$L\otimes id$} (P1)
(P1) edge[->,>=angle 90] node[right] {$ev$} (P3)
(P0) edge[->,>=angle 90] node[left] {$ id\otimes R$} (P4)
(P4) edge[->,>=angle 90] node[right] {$ev\circ c$} (P3)
(P0) edge[->,>=angle 90] node[above] {$\mu$} (P3);
\end{tikzpicture}
\caption{Adjoint maps}
\label{fig:adj}
\end{figure}

$L$ is denoted left adjoint and $R$, right adjoint. Their existence is explained by the universal property of the adjoint functors. Since the category is symmetric, $R$ also exists by the use of the isomorphism between $H\otimes [H,H]$ and $[H,H]\otimes H$; as the diagram \ref{fig:adj} shows.
\end{defn}

\begin{defn}
In a braided closed monoidal $\mathbb{K}$-linear category $\mathcal{C}$
a counital coassociative bialgebra $(H,\Delta,\cdot,\epsilon)$ is called an $H$-bialgebra if its adjoint maps $L$ and $R$ have bilateral inverses in the algebra $\mathcal{C}(H,[H,H])$ defined in \ref{HomAlg}.
\end{defn} 

The simplest example of an $H$-bialgebra is a Hopf algebra, in fact Hopf algebras can be defined as associative unital $H$-bialgebras where there exists $S:H\to H$ such that $L^{-1}=L\circ S$ and $R^{-1}=R\circ S$.

\begin{defn}
In a  symmetric closed monoidal $\mathbb{K}$-linear category $\mathcal{C}$, a Hopf-Moufang algebra is a unital $H$- bialgebra $(H,\mu,\Delta,\epsilon,u)$ in $\mathcal{C}$ such that the diagram \ref{fig:Moufang} commutes and there is $S:H\to H$ such that $L^{-1}=L\circ S$ and $R^{-1}=R\circ S$.

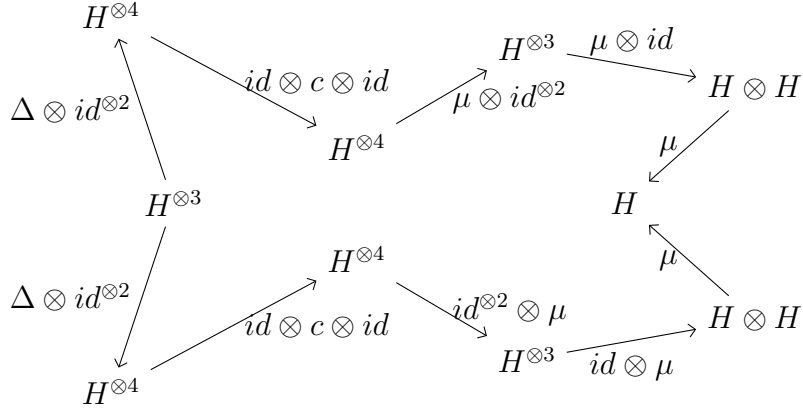
\begin{figure}
	\centering
\begin{tikzpicture}
\node (P0) at (180:3.5cm) {$H^{\otimes 3}$};
\node (P5) at (0:2.5cm) {$H$};
\node (P4) at (150:5cm) {$H^{\otimes 4}$};
\node (P6) at (145:1.3cm) {$H^{\otimes 4}$};
\node (P41) at (-150:5cm) {$H^{\otimes 4}$};
\node (P61) at (-145:1.3cm) {$H^{\otimes 4}$};
\node (P7) at (300:2.4cm) {$H^{\otimes 3}$};
\node (P8) at (340:4.5cm) {$H\otimes H$};
\node (P9) at (60:2.4cm) {$H^{\otimes 3}$};
\node (P10) at (20:4.5cm) {$H\otimes H$};
\draw
(P4) edge[->,>=angle 90] node[right] {$id\otimes c\otimes id$} (P6)
(P41) edge[->,>=angle 90] node[right] {$id\otimes c\otimes id$} (P61)
(P0) edge[->,>=angle 90] node[left] {$ \Delta\otimes id^{\otimes 2}$} (P4)
(P0) edge[->,>=angle 90] node[left] {$ \Delta\otimes id^{\otimes 2}$} (P41)
(P61) edge[->,>=angle 90] node[right] {$id^{\otimes 2}\otimes\mu$} (P7)
(P7) edge[->,>=angle 90] node[below] {$id\otimes\mu$} (P8)
(P8) edge[->,>=angle 90] node[left] {$\mu$} (P5)
(P6) edge[->,>=angle 90] node[right] {$\mu\otimes{ id^{\otimes 2}}$} (P9)
(P9) edge[->,>=angle 90] node[above] {$\mu\otimes id$} (P10)
(P10) edge[->,>=angle 90] node[left] {$\mu$} (P5);
\end{tikzpicture}
\caption{Moufang relations' diagram of a bialgebra}
\label{fig:Moufang}
\end{figure}

In the case $\mathcal{C}=\mathbf{Vec(G,\chi)}$, the diagram \ref{fig:Moufang} translates into the equation $$\sum\chi(|u_{(2)}|,|v|)((u_{(1)}v)u_{(2)})w=\sum\chi(|u_{(2)}|,|v|)u_{(1)}(v(u_{(2)}w))$$

\end{defn}

\subsection{Lie and Hopf algebras with Triality}

The relation between connected Hopf algebras and Lie algebras as equivalent categories is well known. This subsection particularizes this relation to those algebras with triality.

\begin{defn}
In a symmetric monoidal $\mathbb{K}$-linear category $\mathcal{C}$, a Lie algebra with triality is a Lie algebra $(L,\mu)$ in $\mathcal{C}$ endowed with two automorphisms $\sigma_2:L\to L$ and $\sigma_3:L\to L$ such that $\langle\sigma_2,\sigma_3\rangle \cong \mathcal{S}_3$ and that $(id+\sigma_3+\sigma_3^2)\circ(\sigma_2-id)=0$.
\end{defn}

\begin{defn}
In a symmetric monoidal $\mathbb{K}$-linear category $\mathcal{C}$, a Hopf algebra with triality is a Hopf algebra $(H,\mu,\Delta,\epsilon,u)$ in $\mathcal{C}$ endowed with two automorphisms $\sigma_2:H\to H$ and $\sigma_3:H\to H$ such that $\langle\sigma_2,\sigma_3\rangle \cong \mathcal{S}_3$ and that $$\sum P(x_{(1)})\sigma_3(P(x_{(2)}))\sigma_3^2(P(x_{(3)}))=\epsilon(x)1$$ where $P(x)=\sum \sigma_2(x_{(1)})S(x_{(2)})$.
\end{defn}

\subsubsection{From Malcev algebra to Lie algebra with triality}

The first step to construct the enveloping algebra of a Malcev color algebra will be to construct a Lie color algebra with triality from the Malcev algebra.

\begin{lem}
Let $A$ be some color-algebra. Consider then $L_a:A\to A$ as $L_a(x)=ax$ and $R_b:A\to A$ as $R_b(x)=\chi(b,x)xb$.

Leat $a,b\in N_{color}(A)$, then they fulfill the following equations:

\begin{enumerate}
\item $L_{ab}=L_aL_b+[R_a,L_b]_\chi$ and $L_{ba}=L_bL_a+[L_b,R_a]_\chi$
\item $[R_a,L_b]_\chi=[L_a,R_b]_\chi$
\item $[L_a,L_b]_\chi=L_{[a,b]_\chi}-2[R_a,L_b]_\chi$ and $[R_a,R_b]_\chi=-R_{[a,b]_\chi}-2[R_a,L_b]_\chi$
\end{enumerate}
\end{lem}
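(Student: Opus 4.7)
The plan is to prove all three equations by direct computation on a homogeneous element $x\in A$, using only the two color-alternative identities defining $N_{color}(A)$ — namely $(a,y,z)=-\chi(a,y)(y,a,z)$ and $(y,z,a)=-\chi(z,a)(y,a,z)$ for $a\in N_{color}(A)$ — together with $\chi(u,v)\chi(v,u)=1$. The master identity from which everything else will flow is $[R_a,L_b]_\chi(x)=(a,b,x)$.

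To establish this master identity, I would expand $[R_a,L_b]_\chi(x) = R_a(bx) - \chi(a,b)L_b(R_a(x))$; pulling out the $\chi$-scalar from $R_a$ (which contributes $\chi(a,bx)=\chi(a,b)\chi(a,x)$) transforms this into $\chi(a,b)\chi(a,x)\bigl[(bx)a-b(xa)\bigr] = \chi(a,b)\chi(a,x)(b,x,a)$. I would then chain the $N_{color}$ relations: the color-right identity for $a$ gives $(b,x,a)=-\chi(x,a)(b,a,x)$, and the color-left identity for $b$ gives $(b,a,x)=-\chi(b,a)(a,b,x)$. All the $\chi$-factors collapse in pairs via $\chi(a,x)\chi(x,a)=\chi(a,b)\chi(b,a)=1$, leaving $[R_a,L_b]_\chi(x)=(a,b,x)=L_{ab}(x)-L_aL_b(x)$, which is the first identity of 1. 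The second identity of 1 is analogous, and the same pattern proves equation 2: expanding $[L_a,R_b]_\chi(x)=-\chi(b,x)(a,x,b)$ and applying the color-right identity for $b$ yields $(a,b,x)$ as well, so $[L_a,R_b]_\chi=[R_a,L_b]_\chi$.

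Equation 3 then reduces to formal manipulation of the braided bracket. From the definition one has $[R_a,L_b]_\chi = -\chi(a,b)[L_b,R_a]_\chi$, and substituting equation 1 into $L_{[a,b]_\chi} = L_{ab}-\chi(a,b)L_{ba}$ directly gives $L_{[a,b]_\chi} = [L_a,L_b]_\chi + 2[R_a,L_b]_\chi$. For the $R$-version, the quickest route is a direct expansion: $[R_a,R_b]_\chi(x) = \chi(a,b)\chi(a,x)\chi(b,x)\bigl[(xb)a-\chi(b,a)(xa)b\bigr]$, and splitting $(xb)a = x(ba)+(x,b,a)$, $(xa)b = x(ab)+(x,a,b)$, the products-of-elements piece rearranges to $-\chi(b,a)\,x[a,b]_\chi$ yielding $-R_{[a,b]_\chi}(x)$, while chaining the $N_{color}$ relations (first $(x,a,b)=-\chi(a,b)(x,b,a)$, then $(x,b,a)=-\chi(b,a)\chi(x,a)\chi(x,b)(a,b,x)$) collapses the associator piece into $-2(a,b,x)=-2[R_a,L_b]_\chi(x)$. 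The only real obstacle throughout is the bookkeeping of $\chi$-signs — every rearrangement in a $\chi$-commutative setting picks up a braiding factor, and the computations only close because the $N_{color}$ relations are calibrated so that chained applications produce compensating pairs — but there is no conceptual hurdle beyond establishing the master identity.
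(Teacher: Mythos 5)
Your proof is correct, but it takes a genuinely different route from the paper: the paper's entire proof of this lemma is the one-line remark that it is ``just a colored version'' of the corresponding results in the Malcev and Malcev-superalgebra literature, i.e.\ it implicitly relies on the transfer principle via the functor $Env^G_\chi$ (Lemma \ref{Recon}) to import the identities from the classical setting. You instead verify everything directly inside the color category. Your master identity $[R_a,L_b]_\chi(x)=(a,b,x)$ is the right organizing device: I checked that $[R_a,L_b]_\chi(x)=\chi(a,b)\chi(a,x)(b,x,a)$, that chaining the two $N_{color}$ relations gives $(b,x,a)=\chi(x,a)\chi(b,a)(a,b,x)$ so the $\chi$-factors cancel in pairs, and likewise that $[L_a,R_b]_\chi(x)=-\chi(b,x)(a,x,b)=(a,b,x)$, which yields items 1 and 2. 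For item 3 the $L$-version is indeed a formal consequence of item 1 together with $[L_b,R_a]_\chi=-\chi(b,a)[R_a,L_b]_\chi$, and in the $R$-version the product terms assemble to $-\chi(a,x)\chi(b,x)\,x[a,b]_\chi=-R_{[a,b]_\chi}(x)$ while the associator terms combine (using $(x,a,b)=-\chi(a,b)(x,b,a)$ and $(x,b,a)=-\chi(b,a)\chi(x,a)\chi(x,b)(a,b,x)$) to $-2(a,b,x)$, exactly as you claim. What your approach buys is a self-contained verification that does not depend on the correctness of the $Env^G_\chi$ machinery or on locating the precise uncolored statements in the cited sources; what the paper's approach buys is brevity and uniformity with the rest of its arguments, all of which are organized around the Grassmann-envelope transfer. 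For a written-up version you should display the $\chi$-bookkeeping for at least the master identity and the $R$-version of item 3, since those are the only places where the cancellation is not immediate.
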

\begin{proof}
The proof is just a colored version of the results on \cite{MalcevUni} and \cite{Ebar}.
\end{proof}

In light of the previous result we can define a new Lie color-algebra:

\begin{defn}\label{LieTri}
Let $M$ be a Malcev color-algebra and consider the following color vector space $V=\oplus_{g\in G}\langle \mathcal{L}_a,\mathcal{R}_a|\ a\in M_g\rangle$. Consider the $L(M)$ as the Lie color-algebra generated by this vector space and with the following restrictions for $a,b\in M$ and $k\in\mathbb{K}$:
\begin{itemize}
\item $  \mathcal{L}_{a+b}=\mathcal{L}_a+\mathcal{L}_b$ and $\mathcal{L}_{ka}=k\mathcal{L}_a$. 
\item $  \mathcal{R}_{a+b}=\mathcal{R}_a+\mathcal{R}_b$ and $\mathcal{R}_{ka}=k\mathcal{R}_a$.
\item $[\mathcal{R}_a,\mathcal{L}_b]_\chi=[\mathcal{L}_a,\mathcal{R}_b]_\chi$
\item $[\mathcal{L}_a,\mathcal{L}_b]_\chi=\mathcal{L}_{[a,b]_\chi}-2[\mathcal{R}_a,\mathcal{L}_b]_\chi$
\item  $[\mathcal{R}_a,\mathcal{R}_b]_\chi=-\mathcal{R}_{[a,b]_\chi}-2[\mathcal{R}_a,\mathcal{L}_b]_\chi$
\end{itemize}
\end{defn}

The main feature of this construction is that this Lie algebra has a triality, as we shall prove, and contains the original Malcev color-algebra.

In the article \cite{Ebar}, where the universal enveloping algebra of a Malcev superalgebra is built, the constructions and results follow those of the Malcev algebra case, done in \cite{MalcevUni}.

It is clear that this constructions are functorial and that they do not change when considering categories more complex than that of vector spaces.

\begin{thm}
Let $M$ be a Malcev color-algebra, then $L(M)$ fulfills the following equations:

\begin{itemize}
\item $[T_a,T_b]_\chi=\dfrac{1}{3}ad_{[a,b]_\chi}+\dfrac{2}{3}D_{a,b}$
\item $[ad_a,T_b]_\chi=T_{[a,b]_\chi}$
\item $[ad_a,ad_b]_\chi=-ad_{[a,b]_\chi}+2D_{a,b}$
\item $[D_{a,b},ad_{c}]_\chi=ad_{D_{a,b}(c)}$
\item $[D_{a,b},T_{c}]_\chi=-T_{D_{a,b}(c)}$
\item $[D_{a,b},D_{c,d}]_\chi=D_{D_{a,b}(c),d}+\chi(a,c)\chi(b,c)D_{c,D_{a,b}(d)}$
\end{itemize}

and hence has a $\mathbb{Z}_2$ graduation $L(M)= L(M)_-\oplus L(M)_+$ given by $L(M)_-=\langle T_a|a\in M\rangle$ and $L(M)_+=\langle ad_a,D_{a,b}|a,b\in M\rangle$; where $T_a=\mathcal{L}_a+\mathcal{R}_a$ $ad_a=\mathcal{L}_a-\mathcal{R}_a$ and $D_{a,b}=ad_{[a,b]}-3[\mathcal{L}_a,\mathcal{R}_b]_\chi$.
\end{thm}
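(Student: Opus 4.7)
The plan is to exploit the change of generators $T_a=\mathcal{L}_a+\mathcal{R}_a$, $ad_a=\mathcal{L}_a-\mathcal{R}_a$ (so that $\mathcal{L}_a=\tfrac{1}{2}(T_a+ad_a)$ and $\mathcal{R}_a=\tfrac{1}{2}(T_a-ad_a)$) together with the identity $[\mathcal{L}_a,\mathcal{R}_b]_\chi=\tfrac{1}{3}(ad_{[a,b]_\chi}-D_{a,b})$, which is merely a rearrangement of the definition of $D_{a,b}$. This dictionary lets one translate freely between the $\{\mathcal{L},\mathcal{R}\}$ presentation of $L(M)$ from Definition \ref{LieTri} and the $\{T,ad,D\}$ presentation in which the statement is phrased.

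First I would dispatch the three identities not involving $D_{a,b}$ on the right-hand side by direct bilinear expansion of the color bracket, using only the defining relations of Definition \ref{LieTri}. For example, $[T_a,T_b]_\chi$ expands into four $\mathcal{L},\mathcal{R}$ brackets; substituting the defining relations and applying the symmetry $[\mathcal{L}_a,\mathcal{R}_b]_\chi=[\mathcal{R}_a,\mathcal{L}_b]_\chi$ collapses the sum to $ad_{[a,b]_\chi}-2[\mathcal{R}_a,\mathcal{L}_b]_\chi$, which becomes $\tfrac{1}{3}ad_{[a,b]_\chi}+\tfrac{2}{3}D_{a,b}$ after applying the dictionary. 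The same mechanical expansion handles $[ad_a,T_b]_\chi$ and $[ad_a,ad_b]_\chi$; none of these three identities depends on the Malcev color identity of $M$, only on the presentation of $L(M)$.

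The last three identities, which describe the action of $D_{a,b}$ on $ad_c$, $T_c$ and $D_{c,d}$, are where the Malcev hypothesis genuinely enters: they encode that $D_{a,b}$ acts as a color derivation of $L(M)$ preserving its generating subspaces. My plan is to deduce them from the classical Malcev case of \cite{MalcevUni} and the Malcev superalgebra case of \cite{Ebar} by applying Lemma \ref{Recon} to the functor $L$: the envelope $Env^G_\chi$ sends Malcev color algebras to Malcev algebras, and the construction $L(-)$ is functorial and carries the generators of $L(M)$ to those of the classical $L(Env^G_\chi(M))$, producing a natural transformation $L\circ Env^G_\chi\to Env^G_\chi\circ L$ under which the color $T,ad,D$ correspond to their classical analogues up to the bicharacter signs $\chi(-,-)$. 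The classical identities then descend to the color versions by Lemma \ref{Recon}. The main obstacle is the last identity $[D_{a,b},D_{c,d}]_\chi=D_{D_{a,b}(c),d}+\chi(a,c)\chi(b,c)D_{c,D_{a,b}(d)}$, where the colored signs must be tracked carefully when lifting the ungraded proof, and where the precise action $D_{a,b}(c)$ on $M$ has to be pinned down via the well-definedness of the natural $L(M)$-module structure on $M$. Once the six identities are in place, the $\mathbb{Z}_2$-grading $L(M)=L(M)_-\oplus L(M)_+$ is immediate: each bracket falls in the parity prescribed by $T\in L(M)_-$ and $ad,D\in L(M)_+$.
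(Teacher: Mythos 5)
Your proposal is correct in substance and, for the part that actually carries the weight, coincides with the paper's argument. The paper's entire proof consists of defining a map $F:L(Env_\chi^G(M))\to Env_\chi^G(L(M))$ by $F(T_{g\otimes a})=g\otimes T_a$, $F(ad_{g\otimes a})=g\otimes ad_a$, $F(D_{g\otimes a,h\otimes b})=gh\otimes D_{a,b}$, verifying by three explicit bracket computations in the $\mathcal{L},\mathcal{R}$ generators that $F$ respects the defining relations of Definition \ref{LieTri} (hence is a well-defined Lie morphism), and then invoking Lemma \ref{Recon} with $R=L$ and $N=F$ to import all six identities from the classical case of \cite{MalcevUni}. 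You instead prove the first three identities (the ones whose left-hand sides involve only $T$ and $ad$) by direct expansion through the dictionary $[\mathcal{L}_a,\mathcal{R}_b]_\chi=\tfrac{1}{3}(ad_{[a,b]_\chi}-D_{a,b})$ — this is a clean, self-contained computation that the paper does not bother with, and it correctly isolates the fact that these three follow from the presentation alone — and you reserve the transfer argument for the three $D_{a,b}$-identities, which is exactly where the paper uses it too.

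The one place where you are less careful than the paper is the existence of the natural transformation itself. You assert it follows because ``the construction $L(-)$ is functorial,'' but $L(M)$ is defined by generators and relations that explicitly involve the bicharacter $\chi$, so the assignment on generators must be checked to kill the relations of $L(Env^G_\chi(M))$ before it descends to a Lie morphism; those three verifications (for $[\mathcal{L},\mathcal{R}]$, $[\mathcal{L},\mathcal{L}]$, $[\mathcal{R},\mathcal{R}]$) are the actual mathematical content of the paper's proof and should appear in yours. Your flagged worry about pinning down the meaning of $D_{a,b}(c)$ as an element of $M$ is legitimate — the paper is equally silent on it and simply inherits the formula from \cite{MalcevUni} — so that is not a defect relative to the paper's own standard.
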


\begin{proof}
Define $F:L(Env_\chi^G(M))\to Env_\chi^G(L(M))$ as $$F(T_{g\otimes a})=g\otimes T_a$$ $$F(ad_{g\otimes a})=g\otimes ad_a$$ $$F(D_{g\otimes a, h\otimes b})=gh\otimes D_{a,b}$$ and extend by linearity and multiplicativity.

The map $F$ is a well defined Lie morphism:

$$[F(\mathcal{L}_{g\otimes a}),F(\mathcal{R}_{h\otimes b})]_\chi=[g\otimes \mathcal{L}_a,h\otimes \mathcal{R}_{b}]_\chi=gh\otimes [\mathcal{L}_a,\mathcal{R}_{b}]_\chi=$$  $$=gh\otimes[\mathcal{R}_a,\mathcal{L}_{b}]_\chi=F([\mathcal{R}_{g\otimes a},\mathcal{L}_{h\otimes b}]_\chi)$$

$$[F(\mathcal{L}_{g\otimes a}),F(\mathcal{L}_{h\otimes b})]_\chi=[g\otimes \mathcal{L}_a,h\otimes \mathcal{L}_{b}]_\chi=gh\otimes (\mathcal{L}_{[a,b]_\chi}-2[\mathcal{R}_a,\mathcal{L}_b]_\chi)=$$ $$F(\mathcal{L}_{gh\otimes [a,b]_\chi}-2[\mathcal{R}_{g\otimes a},\mathcal{L}_{h\otimes b}]_\chi)=F([\mathcal{L}_{g\otimes a},\mathcal{L}_{h\otimes b}]_\chi)$$

$$[F(\mathcal{R}_{g\otimes a}),F(\mathcal{R}_{h\otimes b})]_\chi=[g\otimes \mathcal{R}_a,h\otimes \mathcal{R}_{b}]_\chi=gh\otimes (-\mathcal{R}_{[a,b]_\chi}-2[\mathcal{R}_a,\mathcal{L}_b]_\chi)=$$ $$F(-\mathcal{R}_{gh\otimes [a,b]_\chi}-2[\mathcal{R}_{g\otimes a},\mathcal{L}_{h\otimes b}]_\chi)=F([\mathcal{R}_{g\otimes a},\mathcal{R}_{h\otimes b}]_\chi)$$

Applying the result \ref{Recon} with $R=L$ and $N=F$ we get all the equations in the theorem since they appear for $L(M)$ in the category of vector spaces in \cite{MalcevUni}. 
\end{proof}

\begin{thm}
Let $M$ be a Malcev color-algebra and construct $L(M)$ as explained in \ref{LieTri}. Then there are automorphisms $\sigma_2,\sigma_3:L(M)\to L(M)$ defined on a set of generators as $$\sigma_2(\mathcal{L}_a)=-\mathcal{R}_a$$ $$\sigma_2(\mathcal{R}_a)=-\mathcal{L}_a$$ $$\sigma_3(\mathcal{L}_a)=\mathcal{R}_a$$ $$\sigma_3(\mathcal{R}_a)=-\mathcal{L}_a-\mathcal{R}_a$$ that make $L(M)$ a Lie color-algebra with triality.
\end{thm}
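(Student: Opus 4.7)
The plan is to define $\sigma_2$ and $\sigma_3$ on the free Lie color-algebra generated by $\{\mathcal{L}_a,\mathcal{R}_a\}_{a\in M}$ via the stated formulas, and then verify four things in order: (i) these maps descend to $L(M)$ (well-definedness), (ii) they are bijective (automorphism property), (iii) they generate a copy of $\mathcal{S}_3$, and (iv) they satisfy the triality identity $(id+\sigma_3+\sigma_3^2)\circ(\sigma_2-id)=0$.

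For (i), the formulas for $\sigma_2,\sigma_3$ are manifestly linear in the index $a$, so compatibility with the additivity and scalar-multiplication relations of Definition \ref{LieTri} is immediate. The symmetry relation $[\mathcal{R}_a,\mathcal{L}_b]_\chi=[\mathcal{L}_a,\mathcal{R}_b]_\chi$ is preserved by $\sigma_2$ trivially (signs cancel) and by $\sigma_3$ after a short rearrangement using the same relation. For the two quadratic relations expressing $[\mathcal{L}_a,\mathcal{L}_b]_\chi$ and $[\mathcal{R}_a,\mathcal{R}_b]_\chi$, the verification under $\sigma_3$ requires combining all three defining relations on the image side; the calculation is the formal color analogue of the classical check in \cite{MalcevUni}. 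This is the main bookkeeping step.

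For (ii), one computes $\sigma_2^2=id$ and $\sigma_3^3=id$ directly on generators, so both maps are bijective and hence automorphisms. For (iii), a direct check on generators of the relation $\sigma_2\sigma_3\sigma_2=\sigma_3^2$, together with the orders $2$ and $3$ of $\sigma_2$ and $\sigma_3$, yields the standard presentation of $\mathcal{S}_3$.

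For (iv), the cleanest route uses the $\mathbb{Z}_2$-grading $L(M)=L(M)_-\oplus L(M)_+$ from the previous theorem, with $L(M)_-=\langle T_a\mid a\in M\rangle$ and $L(M)_+=\langle ad_a,D_{a,b}\mid a,b\in M\rangle$. A short computation shows $\sigma_2$ acts as $-id$ on $L(M)_-$ (since $\sigma_2(T_a)=-T_a$) and as $id$ on $L(M)_+$ (since $\sigma_2(ad_a)=ad_a$ and, using $[\mathcal{R}_a,\mathcal{L}_b]_\chi=[\mathcal{L}_a,\mathcal{R}_b]_\chi$, $\sigma_2(D_{a,b})=D_{a,b}$). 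Hence the image of $\sigma_2-id$ lies in $L(M)_-$, and it suffices to verify that $id+\sigma_3+\sigma_3^2$ annihilates each $T_a$. But $\sigma_3(T_a)=-\mathcal{L}_a$ and $\sigma_3^2(T_a)=-\mathcal{R}_a$, so $(id+\sigma_3+\sigma_3^2)(T_a)=T_a-\mathcal{L}_a-\mathcal{R}_a=0$, as desired.

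The main obstacle I expect is step (i)---checking that $\sigma_3$ respects the two quadratic relations. There is no genuine difficulty, because the color factors appear only implicitly inside the $[\cdot,\cdot]_\chi$ brackets and the formulas for $\sigma_2,\sigma_3$ contain no $\chi$-factor at all, so the calculation reduces essentially to the classical one. Alternatively one could shortcut the whole argument by applying the reconstruction Lemma \ref{Recon} to the morphism $F:L(Env^G_\chi(M))\to Env^G_\chi(L(M))$ of the previous theorem, transporting the classical triality automorphisms via $F$; but the direct verification outlined above is self-contained and barely longer.
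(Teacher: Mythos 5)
Your proposal is correct and follows essentially the same route as the paper: verify the defining relations of $L(M)$ are preserved on generators, note the maps generate $\mathcal{S}_3$, and reduce the triality identity to the vanishing of $id+\sigma_3+\sigma_3^2$ on the image of $\sigma_2-id$, which lies in the span of the $T_a$ (the paper phrases this by checking the composite on $\mathcal{L}_a$, $\mathcal{R}_a$ and $D_{a,b}$, yours via the $\pm1$-eigenspace decomposition of $\sigma_2$ — the same computation). Your treatment is in fact slightly more careful than the paper's, which dismisses well-definedness and the $\mathcal{S}_3$ relations as trivial.
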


\begin{proof}
Trivially $\sigma_2,\sigma_3:L(M)\to L(M)$ exist and define automorphisms, since they satisfy the equations on the definition of $L(M)$.

Also, the group generated by these automorphisms is $\mathcal{S}_3$. So it sufficies to check if $(id+\sigma_3+\sigma_3^2)(id-\sigma_2)=0$. A simple calculation shows that $(id+\sigma_3+\sigma_3^2)(x)=0$ for $x=\mathcal{L}_a$ and $x=\mathcal{R}_a$.

Since $(id-\sigma_2)(\mathcal{L}_a)=\mathcal{L}_a+\mathcal{R}_a=(id-\sigma_2)(\mathcal{R}_a)$ it follows that $(id+\sigma_3+\sigma_3^2)(id-\sigma_2)=0$ for $\mathcal{L}_a$ and $\mathcal{R}_a$. Hence $(id+\sigma_3+\sigma_3^2)(id-\sigma_2)=0$ for $T_a$ and $ad_a$. It rests to check if it vanishes on $D_{a,b}$ or equvalently on $[\mathcal{L}_a,\mathcal{R}_b]_\chi$.

$(id-\sigma_2)([\mathcal{L}_a,\mathcal{R}_b]_\chi)=[\mathcal{L}_a,\mathcal{R}_b]_\chi-[\mathcal{R}_a,\mathcal{L}_b]_\chi=0$.
\end{proof}

\subsubsection{Enveloping algebra of Lie algebra with triality}

The next step is to show how the enveloping algebra of a Lie algebra with triality is a Hopf algebra with triality. The key is our proof resides in the grassmann envelope and the known result in the case of vector spaces.

\begin{prop}
The enveloping algebra of a Lie color algebra with triality is a Hopf color algebra with triality.
\end{prop}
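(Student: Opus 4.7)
The plan is to follow the Grassmann-envelope paradigm used repeatedly in the paper: push the problem into $\mathbf{Vec}_\mathbb{K}$ through $Env_\chi^G$, apply the known classical result there, and pull the conclusion back via the reconstruction Lemma \ref{Recon}. First I would verify that if $(L,\mu,\sigma_2,\sigma_3)$ is a Lie color algebra with triality then $Env_\chi^G(L)$ inherits a Lie algebra with triality structure in $\mathbf{Vec}_\mathbb{K}$: since $Env_\chi^G$ is symmetric monoidal the bracket survives, and the maps $Env_\chi^G(\sigma_i)$ again generate a copy of $\mathcal{S}_3$ and satisfy $(id+Env_\chi^G(\sigma_3)+Env_\chi^G(\sigma_3)^2)\circ(Env_\chi^G(\sigma_2)-id)=0$ by functoriality of the envelope. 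The classical vector-space result then gives that $U(Env_\chi^G(L))$ is a Hopf algebra with triality.

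Next, the universal property of the enveloping algebra lets me extend $\sigma_2,\sigma_3$ uniquely to Hopf color-algebra automorphisms $\widetilde{\sigma}_i:U(L)\to U(L)$ with $\widetilde{\sigma}_i\circ\iota=\iota\circ\sigma_i$, where $\iota:L\to U(L)$ is the canonical inclusion; these still generate $\mathcal{S}_3$. Using the natural transformation $n:U\circ Env_\chi^G\to Env_\chi^G\circ U$ constructed earlier for the Lie color case, one checks by evaluation on primitive generators and then multiplicative extension that $n\circ U(Env_\chi^G(\sigma_i))=Env_\chi^G(\widetilde{\sigma}_i)\circ n$, and analogously that $n$ intertwines coproduct, counit and antipode on both sides (the coproduct is pinned down by the diagram \ref{fig:coproduct}, and compatibility for the antipode follows from its characterization via $L^{-1}$ and $R^{-1}$).

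Finally I would invoke Lemma \ref{Recon} with $R_{G,\chi}=U$ and $N_{G,\chi}=n$: the triality identity
$$\sum P(x_{(1)})\sigma_3(P(x_{(2)}))\sigma_3^2(P(x_{(3)}))=\epsilon(x)1$$
is multilinear in the Hopf operations and the automorphisms, so the observation following the lemma applies and delivers a single corresponding polynomial identity in $U(L)$, which is precisely the color triality relation. The main obstacle I expect is the bookkeeping of the bicharacter factors $\chi(|x_{(i)}|,|x_{(j)}|)$ that appear when the coassociative coproduct is unfolded in a braided setting: one must make sure that the symmetry $c$ is threaded identically on both sides of $n$ so that the equation pulled back by Lemma \ref{Recon} is literally the color triality relation and not a twisted variant. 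Once that verification is in place, together with the transport of the $H$-bialgebra structure (adjoint maps and their bilateral inverses are preserved by any symmetric monoidal functor), the proposition follows.
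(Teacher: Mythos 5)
Your proposal matches the paper's own argument: apply $Env_\chi^G$ to transport the triality to an ordinary Lie algebra, invoke the classical theorem that its enveloping algebra is a Hopf algebra with triality, and pull the identities back to $\operatorname{U}(L)$ via the natural transformation $n$ together with Lemma \ref{Recon}. Your version simply spells out steps (extension of the $\sigma_i$ by the universal property, the intertwining of $n$ with the Hopf operations) that the paper leaves implicit, so it is essentially the same proof in more detail.
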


\begin{proof}
Let $L$ be a Lie color algebra with triality $\sigma_2,\ \sigma_3$. Then $Env_\chi^G(L)$ is a Lie algebra and $\rho_2=Env_\chi^G(\sigma_2),\ \rho_3=Env_\chi^G(\sigma_3)$ are a triality for this Lie algebra:

$$(id+\rho_3+\rho_3^2)(id-\rho_2)=Env_\chi^G((id+\sigma_3+\sigma_3^2)(id-\sigma_2))=0$$

It follows that $\operatorname{U}(Env_\chi^G(L))$ is a Hopf algebra with triality as it is shown in \cite{MalcevTriality}[Theorem 4.4]. The natural transformation $n:\operatorname{U}\circ Env_\chi^G\to Env_\chi^G\circ\operatorname{U}$ stablishes that $\operatorname{U}(L)$ fulfills the identities of a Hopf color algebra with triality.
\end{proof}

\subsubsection{From Hopf algebra with triality to Hopf-Moufang algebra}

We remeber that for a Malcev color algebra $M$, the Lie color algebra $L(M)$ has a $\mathbb{Z}_2$ graduation and $L(M)_-$ has a very special property:

\begin{prop}
The map $\phi:L(M)_-\to M$ where $T_a\mapsto a$ is a linear isomorphism.
\end{prop}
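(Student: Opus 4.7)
The plan splits naturally into surjectivity, linearity, and injectivity. Surjectivity of $\phi$ onto $L(M)_-$ is immediate from the definition $L(M)_- = \langle T_a : a \in M\rangle$, and the relations $\mathcal{L}_{a+b} = \mathcal{L}_a+\mathcal{L}_b$, $\mathcal{R}_{a+b} = \mathcal{R}_a+\mathcal{R}_b$ and their scalar analogues in the presentation of $L(M)$ make $a \mapsto T_a$ well defined and linear. So the substantive content is injectivity, which I would establish by reducing to the classical vector-space result via the Grassmann envelope functor $Env^G_\chi$.

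Set $\tilde M := Env^G_\chi(M)$, a Malcev algebra in $\mathbf{Vec}$. The classical result of Perez-Izquierdo and Shestakov provides a linear isomorphism $\phi_{\tilde M}: L(\tilde M)_- \to \tilde M$ with $T_x \mapsto x$. From the previous theorem we have a Lie color algebra morphism $F: L(\tilde M) \to Env^G_\chi(L(M))$ given on generators by $F(T_{g \otimes a}) = g \otimes T_a$; expanding $\mathcal{L}, \mathcal{R}$ in terms of $T, ad$ also gives $F(\mathcal{L}_{g \otimes a}) = g \otimes \mathcal{L}_a$ and $F(\mathcal{R}_{g \otimes a}) = g \otimes \mathcal{R}_a$. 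A short computation on these generators verifies that $F$ intertwines the triality automorphisms $\sigma_2$ and $\sigma_3$, since their defining formulas on $\mathcal{L}, \mathcal{R}$ coincide in both $L(\tilde M)$ and $L(M)$. Consequently $F$ preserves the induced $\mathbb{Z}_2$-decomposition and restricts to $F_-: L(\tilde M)_- \to Env^G_\chi(L(M))_-$.

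Now $Env^G_\chi(L(M))_- = Env^G_\chi(L(M)_-)$ is spanned by elements $g \otimes T_a$, each of which equals $F_-(T_{g \otimes a})$, so $F_-$ is surjective. On these generators the triangle $Env^G_\chi(\phi) \circ F_- = \phi_{\tilde M}$ commutes, because $Env^G_\chi(\phi)(g \otimes T_a) = g \otimes a = \phi_{\tilde M}(T_{g \otimes a})$. Since $\phi_{\tilde M}$ is an isomorphism and $F_-$ is surjective, $F_-$ must also be injective, hence an isomorphism; then $Env^G_\chi(\phi) = \phi_{\tilde M} \circ F_-^{-1}$ is a linear isomorphism. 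Finally, as $Gr^G_\chi$ has non-zero homogeneous components in every degree of $G$, the functor $Env^G_\chi$ is exact and satisfies $Env^G_\chi(V) = 0$ iff $V = 0$, so it reflects isomorphisms and $\phi$ itself is a linear isomorphism. The main technical hurdle is the verification that $F$ intertwines $\sigma_2$ and $\sigma_3$; once this is in place the rest is a diagram chase.
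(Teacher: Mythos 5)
The surjectivity and linearity observations are fine, and you have correctly identified that the entire content of the proposition is the injectivity of $a\mapsto T_a$ (equivalently, the well-definedness of $\phi$ on the spanning set $\{T_a\}$); note the paper itself states this proposition without proof. But your injectivity argument is circular. The triangle $Env^G_\chi(\phi)\circ F_-=\phi_{\tilde M}$ can only be written down once $\phi$ is known to be a well-defined linear map, and that is precisely what is in question: $L(M)$ is a quotient of a free Lie color algebra by an ideal whose generators, e.g. $[\mathcal{L}_a,\mathcal{L}_b]_\chi-\mathcal{L}_{[a,b]_\chi}+2[\mathcal{R}_a,\mathcal{L}_b]_\chi$, have nonzero degree-one components, so a priori the ideal could meet the span of the $T_a$ and impose relations among them beyond linearity in $a$. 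Replacing $\phi$ by the honestly defined map $\psi:M\to L(M)_-$, $a\mapsto T_a$, does not repair this: on generators one has $F_-\circ\psi_{\tilde M}=Env^G_\chi(\psi)$, where $\psi_{\tilde M}:x\mapsto T_x$ is the classical isomorphism, so the injectivity of $F_-$ is \emph{literally equivalent} to the injectivity of $\psi$ that you are trying to prove. Your deduction that $F_-$ is injective because $\phi_{\tilde M}$ is an isomorphism and $F_-$ is surjective has no non-circular justification; surjectivity alone gives nothing in infinite dimensions.

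The structural reason the reduction cannot work is that $F$ points the wrong way: a morphism \emph{from} the known, non-degenerate classical object $L(Env^G_\chi(M))$ \emph{onto} $Env^G_\chi(L(M))$ cannot rule out collapse in the target. To transfer non-degeneracy one needs a morphism \emph{out of} $L(M)$ into a concrete Lie color algebra in which the images of the $T_a$ remain linearly independent. In the classical case this is exactly what P\'erez-Izquierdo and Shestakov do: they realize the abstract $L(M)$ concretely on the space $D(M,M)\oplus ad(M)\oplus T(M)$, taking the bracket formulas of the preceding theorem as definitions and verifying the Jacobi identity by means of the Malcev identity. A colored version of that explicit model (or an application of Lemma \ref{Recon} to it) is the ingredient your proof is missing; once it is supplied, your surrounding bookkeeping with $Env^G_\chi$ is correct but also no longer needed.
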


This last step consists in giving a functorial construction of a Hopf-Moufang color algebra from a Hopf color algebra with triality and to show how this functor applied to $\operatorname{U}(L(M))$ gives as a result the universal enveloping algebra of $M$; the Malcev color algebra.

\begin{defn}
Given a Hopf color algebra $H$ with triality $\sigma_2,\ \sigma_3$, we define $\mathcal{MH}(H)=\{P(x)|\ x\in H\}$; where $P(x)=\sum \sigma_2(x_{(1)})S(x_{(2)})$.\end{defn}

\begin{prop} $\mathcal{MH}(H)$ with product $u * v=\sum\chi(|u_{(2)}|,|v|) \sigma_3^2(S(u_{(1)}))v\sigma_3(S(u_{(2)}))$ on homogenous elements; together with the unit, counit and coproduct inhereted by $H$ is a bialgebra.
\end{prop}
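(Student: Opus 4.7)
The strategy is to reduce to the classical case over $\mathbf{Vec}_\mathbb{K}$ and transport via the Grassmann envelope. In $\mathbf{Vec}_\mathbb{K}$, Pérez-Izquierdo's theorem (cited as \cite{MalcevUni}) establishes that for an ordinary Hopf algebra with triality, the corresponding Moufang subspace $\mathcal{MH}$ is a bialgebra under the (non-$\chi$-decorated) product $U*V = \sum \rho_3^2(S(U_{(1)})) V \rho_3(S(U_{(2)}))$, with unit, counit and coproduct inherited from the ambient Hopf algebra. By the preceding proposition, $Env^G_\chi(H)$ is a classical Hopf algebra with triality $(\rho_2,\rho_3)=(Env^G_\chi(\sigma_2),Env^G_\chi(\sigma_3))$, so this classical result applies to $\mathcal{MH}(Env^G_\chi(H))$.

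The first step is to produce a natural comparison transformation $m : \mathcal{MH}\circ Env^G_\chi \to Env^G_\chi\circ \mathcal{MH}$, sending $P(g\otimes x) \mapsto g\otimes P(x)$ on generators. Well-definedness is verified by expanding the classical $P$ in $Env^G_\chi(H)$ and checking that the Grassmann component $g\in Gr^G_\chi$ commutes through $\sigma_2$ and $S$ trivially (both being morphisms in $\mathbf{Vec}(G,\chi)$), and that the classical coproduct on $Env^G_\chi(H)$ sends $g\otimes x$ to $\sum (g\otimes x_{(1)})\otimes(g\otimes x_{(2)})$ in the manner dictated by the natural transformation of the earlier section.

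The second step is to check that $m$ intertwines the classical product on $\mathcal{MH}(Env^G_\chi(H))$ with $Env^G_\chi(*)$. Unfolding $(g\otimes u)*(h\otimes v)$ with the classical formula and then sliding the Grassmann components to the left using commutativity in $Gr^G_\chi$, the only nontrivial permutation is moving $u_{(2)}$ past $v$, and the Grassmann bicharacter contributes precisely the factor $\chi(|u_{(2)}|,|v|)$ appearing in the stated formula for $*$. The counit, coproduct and unit are inherited directly from $H$ and are intertwined by $m$ in the same way. At this point, closure of $\mathcal{MH}(H)$ under $*$ follows from the classical closure identity (of the form $P(u)*P(v)=P(w(u,v))$) transported via $m$.

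The third step is a direct application of Lemma \ref{Recon} with $R=\mathcal{MH}$ and natural transformation $m$. All remaining bialgebra axioms — associativity of $*$, coassociativity, counit identities, and multiplicativity of $\Delta$ and $\epsilon$ relative to $*$ — are multilinear polynomial identities holding in $\mathcal{MH}(Env^G_\chi(H))$ by the classical theorem, and Lemma \ref{Recon} descends each to $\mathcal{MH}(H)$, with the required $\chi$-factors supplied automatically at each braided swap. The main obstacle will be the naturality and surjectivity hypothesis of Lemma \ref{Recon} for $m$: one must verify that for every $y\in \mathcal{MH}(H)$ there exists $g\in Gr^G_\chi$ with $g\otimes y \in \operatorname{im}(m)$, which reduces to writing an arbitrary $P(x)\in\mathcal{MH}(H)$ as $m(P(1\otimes x))$ up to a Grassmann unit — a calculation that fits into one line once the coproduct compatibility from the first step is in hand.
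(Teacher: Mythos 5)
Your overall strategy --- push $H$ through the Grassmann envelope, invoke the classical Moufang--Hopf construction there, and pull the identities back along a natural transformation in the style of Lemma \ref{Recon} --- is the same as the paper's. But there is a genuine gap at the very first step: you treat $Env^G_\chi(H)$ as a classical Hopf algebra with triality to which the classical theorem applies directly. It is not, at least not for free. The functor $Env^G_\chi$ is monoidal but \emph{not} comonoidal (the paper stresses this earlier: the structure map goes $i:Env^G_\chi(A)\otimes Env^G_\chi(B)\to Env^G_\chi(A\otimes B)$, the wrong direction for transporting a coproduct). Consequently $Env^G_\chi(\Delta)$ lands in $Env^G_\chi(H\otimes H)$, not in $Env^G_\chi(H)\otimes Env^G_\chi(H)$, and your formula sending $g\otimes x$ to $\sum (g\otimes x_{(1)})\otimes(g\otimes x_{(2)})$ is not even degree-correct: the components $x_{(1)},x_{(2)}$ have varying degrees across the sum, so one must factor $g=g_{(1)}g_{(2)}$ in $Gr^G_\chi$ term by term, and such a factorization is neither canonical nor unique.

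The paper's proof spends most of its effort on exactly this point: it restricts to $PEnv^G_\chi(H)$, the subalgebra generated by $Env^G_\chi(\operatorname{Prim} H)$ and the unit, shows that on this subalgebra a coproduct $\widehat{\Delta}$ lifting $Env^G_\chi(\Delta_\chi)$ through $i$ can be chosen (explicitly acknowledging the non-uniqueness), verifies the antipode and triality identities there, and only then applies the cited theorem to $\mathcal{MH}(PEnv^G_\chi(H))$ before transporting the product formula back. Your second and third steps (intertwining the products, reading off the factor $\chi(|u_{(2)}|,|v|)$ from the braided swap, and descending the multilinear bialgebra axioms) are sound in outline and match the paper, but they all presuppose that the object on the envelope side exists as a genuine Hopf algebra with triality --- which is precisely what you have not established. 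To repair the argument you need to insert the construction of $PEnv^G_\chi(H)$ (or an equivalent device for lifting the coproduct) before invoking the classical result.
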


\begin{proof}
Consider the associative algebra $Env_\chi^G(H)$. Firstly, $\rho_2=Env_\chi^G(\sigma_2)$ and $\rho_3=Env_\chi^G(\sigma_3)$ are automorphisms by the properties of functors.

On the algebra $Env_\chi^G(H)$ the coproduct is applied as $Env_\chi^G(\Delta)(g\otimes x)=g\otimes \Delta(x)$; also the counit is $Env_\chi^G(\epsilon)(g\otimes x)=g\otimes \epsilon(x)$.

Define $PEnv_\chi^G(H)$ as the subalgebra of $Env_\chi^G(H)$ generated by $Env_\chi^G(Prim H)$ and the unit element. First, we see that $Env_\chi^G(\Delta)(PEnv_\chi^G(H))\subseteq PEnv_\chi^G(H)\otimes PEnv_\chi^G(H)$; and that $i(PEnv_\chi^G(H)\otimes PEnv_\chi^G(H))\subseteq PEnv_\chi^G(H\otimes H)$. Even more, $\rho_2$ and $\rho_3$ can be considered automorphisms of the algebra $PEnv_\chi^G(H)$.

\begin{figure}
	\centering
\begin{tikzpicture}
\node (P2) at (35+180:3cm) {$PEnv^G_\chi(H)$};
\node (P1) at (35:3cm) {$PEnv^G_\chi(H)\otimes PEnv^G_\chi(H)$};
\node (P3) at (-35:3cm) {$PEnv^G_\chi(H\otimes H)$};
\draw
(P2) edge[->,>=angle 90] node[below] {$\widehat{\Delta}$} (P1)
(P1) edge[->,>=angle 90] node[right] {$i$} (P3)
(P2) edge[->,>=angle 90] node[below] {$Env^G_\chi(\Delta_\chi)$} (P3);
\end{tikzpicture}
\caption{Coproduct for the Grassmann envelope of $H$}
\label{fig:coproduct2}
\end{figure}

To give $PEnv^G_\chi(H)$ a bialgebra structure, we need a coproduct and counit. The natural structure would come considering the diagramm \ref{fig:coproduct2}.

The problem in this setting is the fact that there is not a unique $\widehat{\Delta}$ that makes the diagramm \ref{fig:coproduct2} commutative; there are several such maps. They arise by the fact that this algebra is not a UFD and could be $g\otimes xy=(g_1\otimes x)(g_2\otimes y)=(h_1\otimes x)(h_2\otimes y)$; where $h_1\ne g_1$ and $h_2\ne g_2$. Nevertheless, we consider one of those coproducts.

As counit, we consider $\epsilon(g\otimes x)=g\otimes \epsilon(x)$. Note that if $x\in\mathbb{K}$, then $|g|=0$; and in case $|g|\ne 0$, then $\epsilon(x)=0$.

By construction, $PEnv^G_\chi(H)$ is then a bialgebra; with two special automorphisms. And the following equations hold:

$$\sum (g\otimes x)_{(1)}Env_\chi^G(S)((g\otimes x)_{(2)})=\sum g_{(1)}g_{(2)}\otimes x_{(1)}S(x_{(2)})=$$ $$g\otimes \sum\chi(x,x_{(1)}x_{(2)})x_{(1)}S(x_{(2)})=g\otimes \epsilon(x)$$

Hence $PEnv_\chi^G(H)$ is a Hopf algebra, with two special automorphisms.

$$\sum P((g\otimes x)_{(1)})\rho_3P((g\otimes x)_{(2)})\rho_3^2P((g\otimes x)_{(3)})=$$ $$\sum g_{(1)}g_{(2)}g_{(3)}\otimes P( x_{(1)})\rho_3P( x_{(2)})\rho_3^2P( x_{(3)})=$$ $$g\otimes \sum\chi(x,x_{(1)}x_{(2)}x_{(3)}) P( x_{(1)})\rho_3P( x_{(2)})\rho_3^2P( x_{(3)})=g\otimes \epsilon(x)$$

In conclusion, $PEnv^G_\chi(H)$ is a Hopf algebra with triality.

In this algebra $\mathcal{MH}(PEnv^G_\chi(H))$ can be defined following \cite{MalcevTriality}[Theorem 3.3] and it is a Hopf-Moufang algebra with product:

$$(g\otimes u)*(h\otimes v)=\sum\sigma_3^2(S((g\otimes u)_{(1)}))(h\otimes v)\sigma_3(S((g\otimes u)_{(2)}))$$

But then, the natural transformation $n:\mathcal{MH}(PEnv_\chi^G(H))\to Env_\chi^G(\mathcal{MH}(H))$ results in:

$$((g_1\dots g_n)\otimes (u_1\dots u_n))*((h_1\dots h_m)\otimes (v_1\dots v_m)=(gh)\otimes (u* v)$$ $$=\sum(g_{(1)}hg_{(2)})\otimes\sigma_3^2(S(u_{(1)}))v\sigma_3(S(u_{(2)}))$$ $$=\sum(g_{(1)}g_{(2)}h)\otimes\chi(|u_{(2)}|,|v|)\sigma_3^2(S(u_{(1)}))v\sigma_3(S(u_{(2)}))$$

$$(gh)\otimes(u*v-\sum\chi(|u_{(2)}|,|v|)\chi(u,u_{(1)}u_{(2)})\sigma_3^2(S(u_{(1)}))v\sigma_3(S(u_{(2)})))=0$$

The conclusion that follows, rewriting the notation of the cocommutative coproduct for the color category, is that defining $u*v$ as in the hypothesis $\mathcal{MH}(H)$ becomes a Moufang-Hopf color algebra; with the unit, counit and coproduct inherited from $H$.

\end{proof}

\begin{cor}
$\mathcal{MH}(\operatorname{U}(L(M)))$ is a Moufang-Hopf algebra.
\end{cor}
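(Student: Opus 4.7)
The plan is to chain together the three functorial constructions developed in this subsection and the previous ones, since the corollary is essentially a composition of already-established results. I would first invoke the theorem that, given a Malcev color algebra $M$, the Lie color algebra $L(M)$ constructed in Definition \ref{LieTri} carries a triality $(\sigma_2,\sigma_3)$ satisfying $(id+\sigma_3+\sigma_3^2)(id-\sigma_2)=0$; this was proved in the theorem immediately preceding this section via the explicit definitions of $\sigma_2,\sigma_3$ on the generators $\mathcal{L}_a,\mathcal{R}_a$.

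Next, I would apply the proposition stating that the enveloping algebra of a Lie color algebra with triality is a Hopf color algebra with triality. Specialising to $L=L(M)$, this yields that $\operatorname{U}(L(M))$ is a Hopf color algebra, and the induced automorphisms $\operatorname{U}(\sigma_2)$ and $\operatorname{U}(\sigma_3)$ (equivalently, the ones obtained via the Grassmann envelope argument in that proof) endow it with the required triality identity.

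Finally, I would apply the preceding proposition, which constructs $\mathcal{MH}(H)$ for any Hopf color algebra with triality $H$ and shows that, with the twisted product $u*v=\sum\chi(|u_{(2)}|,|v|)\sigma_3^2(S(u_{(1)}))v\sigma_3(S(u_{(2)}))$ together with the unit, counit and coproduct inherited from $H$, it becomes a Moufang-Hopf color algebra. Taking $H=\operatorname{U}(L(M))$ then gives the statement.

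The main ``obstacle'' is not really an obstacle at all, since every substantive step has been carried out: the only thing one might wish to verify is that the triality automorphisms used in defining $\mathcal{MH}(\operatorname{U}(L(M)))$ are precisely the ones induced from $\sigma_2,\sigma_3$ of $L(M)$ via the universal property of $\operatorname{U}$, so that the construction is canonical and functorial in $M$. This is automatic from the naturality of $\operatorname{U}$ and of the triality-preserving morphisms, so the proof reduces to writing ``apply the previous three results in turn''.
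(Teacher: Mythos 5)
Your proof is correct and takes essentially the same route as the paper: the corollary is stated there without proof precisely because it is the immediate composition of the three results you cite, namely that $L(M)$ is a Lie color algebra with triality, that $\operatorname{U}$ of a Lie color algebra with triality is a Hopf color algebra with triality, and that $\mathcal{MH}(H)$ of such a Hopf algebra, with the twisted product $*$, is a Moufang-Hopf color algebra.
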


This algebra is our candidate for the universal enveloping algebra of the Malcev color algebra $M$. The following results all lead to proof that fact.

First, we find a basis of $\mathcal{MH}(\operatorname{U}(L(M)))$ as a vector space.

\begin{lem}\label{basis}
Let $L$ be a Lie color algebra and $\lambda:S_3\to Aut(L)$ an action of $S_3$ as automorphisms of $L$; with $\lambda((12))=\sigma$.

Then $\mathcal{MH}(\operatorname{U}(L))=\{P(x)|\ x\in \operatorname{U}(L)\}=span\langle a_n\bullet(a_{n-1}\bullet\dots\bullet(a_2\bullet a_1))|\ a_i\in E(-1;\sigma),\ n\in\mathbb{N}\rangle$; where $P(x)=\sum \sigma(x_{(1)})S(x_{(2)})$ and $a\bullet b=ab+\chi(|a|,|b|)ba$ on homogeneous elements.
\end{lem}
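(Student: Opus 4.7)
The plan is to reduce the statement to a recursive formula for $P$ on products in $\operatorname{U}(L)$, combined with the color PBW theorem.

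First I would extend $\sigma$ to an algebra automorphism of $\operatorname{U}(L)$ of order two, so that $L=L^+\oplus L^-$ with $L^-=E(-1;\sigma)$. On primitive elements $\Delta(x)=x\otimes 1+1\otimes x$ and $S(x)=-x$, giving $P(x)=\sigma(x)-x$; in particular $P(L)=L^-$. Next I would compute $P(xy)$ for $x\in L$ primitive and $y\in\operatorname{U}(L)$ homogeneous. In the color category the multiplication on $\operatorname{U}(L)\otimes\operatorname{U}(L)$ involves the braiding, so
\[\Delta(xy)=xy_{(1)}\otimes y_{(2)}+\chi(|x|,|y_{(1)}|)\,y_{(1)}\otimes xy_{(2)},\]
and $S$ on a product is $S(xy_{(2)})=-\chi(|x|,|y_{(2)}|)S(y_{(2)})x$. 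Combining these with the multiplicativity $\chi(|x|,|y_{(1)}|)\chi(|x|,|y_{(2)}|)=\chi(|x|,|y|)$ and with $|P(y)|=|y|$, the sum collapses to
\[P(xy)=\sigma(x)P(y)-\chi(|x|,|P(y)|)P(y)x,\]
which specializes to $P(xy)=-x\bullet P(y)$ when $x\in L^-$ and to $P(xy)=[x,P(y)]_\chi$ when $x\in L^+$.

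With this recursion both inclusions follow. For $\supseteq$, iterating the $L^-$ case on $a_n a_{n-1}\cdots a_1$ with $a_i\in L^-$ yields
\[P(a_n\cdots a_1)=(-1)^n\cdot 2\,a_n\bullet(a_{n-1}\bullet\cdots\bullet a_1),\]
so every right-associated bullet lies in $P(\operatorname{U}(L))$. For $\subseteq$, color PBW reduces the question to computing $P(x_1\cdots x_n)$ with each $x_i\in L^+\cup L^-$; the recursion then gives either $-x_1\bullet P(x_2\cdots x_n)$ or $[x_1,P(x_2\cdots x_n)]_\chi$. The first case closes the induction directly; for the second I would verify that $[x_1,-]_\chi$ with $x_1\in L^+$ is a color derivation of $\bullet$,
\[[x_1,a\bullet b]_\chi=[x_1,a]_\chi\bullet b+\chi(|x_1|,|a|)\,a\bullet[x_1,b]_\chi,\]
and that $[L^+,L^-]_\chi\subseteq L^-$ (since $\sigma$ preserves the bracket). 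Together these keep each iterated right-associated bullet inside the claimed span, closing the induction.

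The main obstacle is the color-sign bookkeeping in the key recursion, particularly the braiding contribution in the product on $\operatorname{U}(L)\otimes\operatorname{U}(L)$ and in the antipode of a product; everything else is routine induction on length. An alternative route avoids this computation: push the problem through the Grassmann envelope $Env^G_\chi$, invoke the vector-space version of the claim from \cite{MalcevTriality} applied to $\operatorname{U}(Env^G_\chi(L))$ with triality $Env^G_\chi(\sigma)$, and descend to $\operatorname{U}(L)$ using Lemma~\ref{Recon} and the natural transformation $n:\operatorname{U}\circ Env^G_\chi\to Env^G_\chi\circ\operatorname{U}$; this hides the explicit scalar $(-1)^n\cdot 2$ but bypasses the direct color calculation.
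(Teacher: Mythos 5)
Your proof is correct, and its core coincides with the paper's: both arguments rest on expanding $P(xy)$ for a primitive $x$ via $\Delta(xy)=\Delta(x)\Delta(y)$ and $S(xy_{(2)})=-\chi(|x|,|y_{(2)}|)S(y_{(2)})x$, which for $x\in E(-1;\sigma)$ yields the key recursion $P(xy)=-x\bullet P(y)$ and hence, iterated from $P(a)=-2a$, the containment of the span of right-nested bullets in $\mathrm{im}(P)$. Where you genuinely diverge is in disposing of the $E(1;\sigma)$ factors for the reverse containment. The paper multiplies an even eigenvector on the \emph{right} and shows $P(ya)=0$; to apply this to an arbitrary monomial it must first push all even factors to the end, a reordering (modulo lower filtration degree, absorbed by the induction hypothesis) that the paper leaves implicit in the phrase ``we only need to consider the cases where $a_i\in E(-1;\sigma)$ for $i\leq k$.'' You instead prepend the even factor on the left, obtaining $P(xy)=[x,P(y)]_\chi$, and close the induction by checking that $[x,-]_\chi$ is a color derivation of $\bullet$ and that $[E(1;\sigma),E(-1;\sigma)]_\chi\subseteq E(-1;\sigma)$. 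This costs two small extra verifications but buys a uniform left-to-right induction on the length of the monomial with no reordering, so it in fact fills in the tersest step of the paper's proof; your sign bookkeeping ($(-1)^n\cdot 2$, which is invertible in characteristic zero) is consistent. The alternative route you sketch through $Env^G_\chi$ and the vector-space result of \cite{MalcevTriality} matches the paper's general philosophy elsewhere, but is not the method the paper uses for this particular lemma.
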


\begin{proof}
We will procede by induction on the degree filtration on $U(L)$. The base case is $a\in L$.

$$P(a)=\sigma(a)-a\in E(-1;\sigma)$$

Assume know that it is true up to some natural number $k\in\mathbb{N}$; and consider $a_1\dots a_{k+1}\in U(L)$.
We may assume that $a_i\in E(-1;\sigma)\cup E(1;\sigma)$; since $\sigma^2=Id$.

By induction, we only need to consider the cases where $a_i\in E(-1,\sigma)$ for $i\leq k$. Hence we are reduced to two cases: $\sigma(a_{k+1})=a_{k+1}$ or $\sigma(a_{k+1})=-a_{k+1}$.

In the first case $$P(xa_{k+1})=$$ $$\sum\chi(|x_{(2)}|,|\displaystyle{a_{k+1}}|)\sigma(x_{(1)})\sigma(a_{k+1})S(x_{(2)})-$$ $$\sum\chi(|x_{(2)}|,|a_{k+1}|)\sigma(x_{(1)})a_{k+1}S(x_{(2)})=0$$ and we are done.

In the latter case $$P(a_1x)=$$ $$\sum\sigma(a_1)\sigma(x_{(1)})S(x_{(2)})-\sum\chi(|a_1|,|x|)\sigma(x_{(1)})S(x_{(2)})a_1=-a_1\bullet P(x)$$ and we are done.
\end{proof}

The previous bases can be constructed using the product $*$ defined on $\mathcal{MH}(\operatorname{U}(L(M)))$; instead of that of $\operatorname{U}(L(M))$; as the following lemma shows.

\begin{lem}\label{eqProd}
In the previous settings for the Lie color algebra $L(M)$, $T_a*u+\chi(|a|,|u|)u*T_a=T_au+\chi(|a|,|u|)uT_a$ for any $u\in \mathcal{MH}(\operatorname{U}(L(M)))$
\end{lem}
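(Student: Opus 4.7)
The plan is to split the identity into two halves: the term $T_a * u$ is handled by a short direct computation using the primitivity of $T_a$, while the remaining identity for $u*T_a$ is transported from the classical vector-space Malcev setting by means of the Grassmann envelope.

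Since $T_a\in L(M)$ is primitive in $\operatorname{U}(L(M))$, we have $\Delta(T_a)=T_a\otimes 1+1\otimes T_a$ and $S(T_a)=-T_a$. The explicit values $\sigma_3(T_a)=-\mathcal{L}_a$ and $\sigma_3^2(T_a)=-\mathcal{R}_a$ are read off from Definition \ref{LieTri}. Substituting these into the defining formula $u*v=\sum\chi(|u_{(2)}|,|v|)\,\sigma_3^2(S(u_{(1)}))\,v\,\sigma_3(S(u_{(2)}))$ collapses the sum to two terms and yields
\[T_a*u \;=\; \mathcal{R}_a u + \chi(|a|,|u|)\,u\,\mathcal{L}_a.\]
Using $\chi(|a|,|u|)\chi(|u|,|a|)=1$, this reduces the lemma to the single identity
\[u*T_a \;=\; \chi(|u|,|a|)\,\mathcal{L}_a u + u\,\mathcal{R}_a.\]

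For this second half, $\Delta(u)$ has no closed form for a generic element of $\mathcal{MH}(\operatorname{U}(L(M)))$, so a direct expansion is not available. Instead I would apply the Grassmann envelope functor $Env_\chi^G$ together with the natural transformation $n$ of the previous proposition, which intertwines the $*$-product on $\mathcal{MH}(\operatorname{U}(L(Env_\chi^G(M))))$ with the envelope of the $*$-product on $\mathcal{MH}(\operatorname{U}(L(M)))$ up to the color twist produced by commuting grading tensor factors. In the plain vector-space category, applied to the vector-space Malcev algebra $Env_\chi^G(M)$, the identity $T_a*u+u*T_a=T_au+uT_a$ is a standard Malcev-triality relation, essentially the calculation of \cite{MalcevTriality}.

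Pulling this vector-space identity back through $n$ and invoking the linear-independence argument of Lemma \ref{Recon}, the grading factors cancel and the color twist $\chi(|a|,|u|)$ appears precisely from the $gh$-vs-$hg$ reordering on the right-hand factor, reproducing the required identity. The main obstacle is the second step: confirming that $n$ genuinely intertwines $*$-products, which reduces to checking that $\Delta$, $S$, $\sigma_2$, $\sigma_3$ and the bialgebra product are all natural with respect to $Env_\chi^G$. Each of these is visible from the explicit formulas established earlier in the paper, so the remaining work is bookkeeping rather than new structural input.
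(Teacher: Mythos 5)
Your proposal is correct and ultimately takes the same route as the paper: the decisive step in both is to invoke the vector-space triality identity of \cite{MalcevTriality} for $Env_\chi^G(\operatorname{U}(L(Env_\chi^G(M))))$, push it through the natural transformation $n$, and read off the color twist $\chi(|a|,|u|)$ from the reordering of the Grassmann factors $gh$ versus $hg$. Your preliminary direct computation of $T_a*u=\mathcal{R}_a u+\chi(|a|,|u|)u\mathcal{L}_a$ via primitivity is correct but ends up redundant, since the transported identity you then establish already yields the full symmetrized statement.
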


\begin{proof}
By  \cite{MalcevTriality}[Lemma 5.2], $(g\otimes T_a)*(h\otimes u)+(h\otimes u)*(g\otimes T_a)=(g\otimes T_a)(h\otimes u)+(h\otimes u)(g\otimes T_a)$; in $Env_\chi^G(\operatorname{U}(L(M)))$ and $Env_\chi^G(\mathcal{MH}(\operatorname{U}(L(M))))$

$$(g\otimes T_a)*(h\otimes u)+(h\otimes u)*(g\otimes T_a)=(gh)\otimes (T_a* u)+(hg)\otimes (u* T_a)=$$ $$(gh)\otimes (T_a* u+\chi(|a|,|u|)u* T_a)$$

$$(g\otimes T_a)(h\otimes u)+(h\otimes u)(g\otimes T_a)=(gh)\otimes (T_a u)+(hg)\otimes (u*T_a)=$$ $$(gh)\otimes (T_au+\chi(|a|,|u|)u T_a)$$

In conclusion, it follows that $T_a*u+\chi(|a|,|u|)u*T_a=T_au+\chi(|a|,|u|)uT_a$, applying the natural transformation $n:\operatorname{U}\circ Env_\chi^G\to Env_\chi^G\circ\operatorname{U}$.
\end{proof}

\section{Main result}

All the previous results can be added to prove the equivalence between Malcev color algebras and connected color Hopf-Moufang algebras.

\begin{thm}
$\mathcal{MH}(\operatorname{U}(L(M)))\cong \operatorname{U}(M)$ and $Prim(U(M))\cong M$.
\end{thm}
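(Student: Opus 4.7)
The plan is to construct a canonical morphism $\Psi:\operatorname{U}(M)\to \mathcal{MH}(\operatorname{U}(L(M)))$ from the universal property of $\operatorname{U}(M)$, and then show it is an isomorphism by a combination of the basis description provided by Lemma \ref{basis} (for surjectivity) and a reduction to the vector space case via the Grassmann envelope $Env^G_\chi$ (for injectivity). The second assertion about primitives will then follow as a corollary.

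For the construction of $\Psi$, I would verify that the assignment $\iota:M\to \mathcal{MH}(\operatorname{U}(L(M)))^-$ given by $a\mapsto T_a$ is a Malcev color-algebra morphism. The heart of this check is that $\sigma_2(T_a)=-T_a$, so $T_a\in E(-1;\sigma_2)$; Lemma \ref{eqProd} then identifies $T_a*u+\chi(|a|,|u|)u*T_a$ with the corresponding expression in $\operatorname{U}(L(M))$. Using this together with the Lie color-algebra identities of $L(M)$, the $*$-bracket of $T_a$ and $T_b$ reproduces $T_{[a,b]_\chi}$ up to terms in $L(M)_+$ that vanish when one further tests the linearized Malcev identity; this last compatibility is exactly the color analogue of the corresponding calculation in \cite{MalcevUni}, and can be imported through the natural transformation $n:\operatorname{U}\circ Env^G_\chi\to Env^G_\chi\circ\operatorname{U}$ via Lemma \ref{Recon}. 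Once $\iota$ is established as a Malcev morphism, the universal property yields the unique algebra morphism $\Psi$ with $\Psi\circ i=\iota$.

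Surjectivity of $\Psi$ is immediate from Lemma \ref{basis}: $\mathcal{MH}(\operatorname{U}(L(M)))$ is spanned by iterated $\bullet$-products of elements of $E(-1;\sigma_2)$, and this eigenspace coincides with $L(M)_-=\langle T_a\mid a\in M\rangle$ under $\phi$; each such generator $T_a$ lies in the image of $\Psi$, and $\bullet$-products can be expressed in $\operatorname{U}(M)$ using the product there. For injectivity, I apply $Env^G_\chi$ to the entire commutative diagram: the functors $L$, $\operatorname{U}$ and $\mathcal{MH}$ each come equipped with natural comparison maps to the Grassmann envelope of their image, and by Lemma \ref{Recon} these extend the classical Pérez-Izquierdo–Shestakov isomorphism $\operatorname{U}(N)\cong \mathcal{MH}(\operatorname{U}(L(N)))$ for vector space Malcev algebras $N=Env^G_\chi(M)$ to a statement about our color $\Psi$. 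Since $Env^G_\chi$ is faithful on $\mathbf{Vec_\mathbb{K}(G,\chi)}$ (one recovers the $e$-graded component, and the $\chi$-symmetry, from the algebraic data it produces), injectivity descends from the vector space case to $\Psi$ itself.

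For the second assertion, using the isomorphism $\operatorname{U}(M)\cong \mathcal{MH}(\operatorname{U}(L(M)))$ and the inclusion $L(M)_-\hookrightarrow \operatorname{U}(L(M))$, one identifies $\operatorname{Prim}(\operatorname{U}(M))$ with the image of $P$ restricted to the primitive subspace of $\operatorname{U}(L(M))$, which by the classical theory is $L(M)$ itself; the $P$-image of the even part $L(M)_+$ vanishes by the triality identity, while $P$ restricted to $L(M)_-$ is a scalar multiple of $\phi^{-1}:L(M)_-\cong M$. Again this reduces through $Env^G_\chi$ to the vector space version. The step I expect to be the most delicate is the injectivity argument: making the Grassmann envelope interact coherently with the three functors $L$, $\operatorname{U}$ and $\mathcal{MH}$ simultaneously requires careful bookkeeping of the natural transformations $n$ and $i$ and of the hypothesis of Lemma \ref{Recon} that $g\otimes y$ lies in the image of $N_{G,\chi}$ for sufficiently many $g$; verifying this hypothesis on the explicit basis from Lemma \ref{basis} is the main obstacle.
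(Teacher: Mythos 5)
Your construction of $\Psi$ via the universal property and your surjectivity argument via Lemma \ref{basis} and Lemma \ref{eqProd} follow the paper's route. The genuine problem is your injectivity argument. You propose to ``apply $Env^G_\chi$ to the entire commutative diagram'' and let injectivity ``descend from the vector space case.'' But the objects appearing in the classical Pérez-Izquierdo--Shestakov statement for $N=Env^G_\chi(M)$ are $\operatorname{U}(N)$ and $\mathcal{MH}(\operatorname{U}(L(N)))$, which are \emph{not} of the form $Env^G_\chi(-)$ of the color objects you care about; they are related to $Env^G_\chi(\operatorname{U}(M))$ and $Env^G_\chi(\mathcal{MH}(\operatorname{U}(L(M))))$ only through the comparison transformations $n$ (and the map $F:L(Env^G_\chi(M))\to Env^G_\chi(L(M))$). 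To conclude that $\Psi(y)=0$ forces $y=0$, you would write $g\otimes y=n(z)$, push $z$ through the classical isomorphism, and then you need the comparison map on the $\mathcal{MH}(\operatorname{U}(L(-)))$ side to be \emph{injective} on the relevant subspace in order to pull the conclusion back. Lemma \ref{Recon} transports polynomial identities along $n$; it says nothing about faithfulness of $n$, and injectivity of these comparison maps is essentially a PBW-type statement of the same depth as the one you are trying to prove. Faithfulness of the functor $Env^G_\chi$ itself (which you correctly note) does not help here, because the square you need is not an image of a square under $Env^G_\chi$. So as written the injectivity step is circular, or at least unsupported.

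The paper closes this gap differently and more directly: it first observes that $M\cap\ker(\phi\circ i)=\{0\}$ (the generators $T_a$ remain linearly independent in $L(M)_-\subseteq\operatorname{U}(L(M))$), exhibits the explicit spanning set $\{a_{i_n}\bullet(\cdots\bullet(a_{i_2}\bullet a_{i_1}))\}$ of $\operatorname{U}(M)$, and then shows via Lemma \ref{eqProd} that its image under $\phi$ is the set $\{(-1)^nT_{a_{i_n}}\bullet(\cdots\bullet(T_{a_{i_2}}\bullet T_{a_{i_1}}))\}$, which is linearly independent inside $\operatorname{U}(L(M))$ by the (already known) Poincaré--Birkhoff--Witt theorem for Lie \emph{color} algebras. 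Injectivity of $\phi$ and the PBW basis for $\operatorname{U}(M)$, hence $Prim(\operatorname{U}(M))\cong M$, come out of this single linear-independence computation. If you want to salvage your descent strategy, you would have to first prove injectivity of $n$ on the span of the relevant monomials, and at that point you have effectively redone the paper's linear-independence argument; I recommend replacing the descent step by the direct argument inside $\operatorname{U}(L(M))$.
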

\begin{proof}
$$P(T_a)=\sum \sigma_2(T_a)_{(1)} S(T_a)_{(2)}=\sigma_2(T_a)- T_a=-2T_a$$

Hence $T_a\in \mathcal{MH}(\operatorname{U}(L(M)))$
and is a primitive element of $\mathcal{MH}(\operatorname{U}(L(M)))$; so they are part of the alternative nucleous.

In $Env_\chi^G(\mathcal{MH}(\operatorname{U}(L(M))))$, from \cite{MalcevTriality}[Theorem 5.3]
$(g\otimes T_a) *( h\otimes T_b)-(h\otimes T_b) *(g\otimes T_a)=-[g\otimes T_a,h\otimes T_b]$; which transforms itself to $(gh)\otimes (T_a*T_b-\chi(|a|,|b|)T_b*T_a)=(gh)\otimes (-[T_a,T_b]_\chi)$ when applied the natural transformation $n: \mathcal{MH}\circ\operatorname{U}\circ Env_\chi^G\to Env_\chi^G\circ\mathcal{MH}\circ\operatorname{U}$.

In conclusion $T_a*T_b-\chi(|a|,|b|)T_b*T_a=-[T_a,T_b]_\chi$ and hence the map $a\mapsto -T_a$, by the universal property of $\operatorname{U}$, extends to an algebra morphism $\phi:\operatorname{U}(M)\to \mathcal{MH}(\operatorname{U}(L(M)))$

By the lemma \ref{basis}, $\mathcal{MH}(\operatorname{U}(L(M)))$ is spanned by $\{T_{a_n}\bullet(\dots(T_{a_2}\bullet T_{a_1}))|\ a_i\in M; n\in\mathbb{N}\}$ from wich follows that $\phi$ is surjective using also lemma \ref{eqProd}. 

Also, $M\cap ker(\phi\circ i)=\{0\}$ and hence we have the PBW theorem for $\operatorname{U}(M)$; where $i:M\to \operatorname{U}(M)$ is the map $i(m)=\overline{m}$; it follows then that $Prim(U(M))\cong M$. Given $\{a_i\}_i$, an ordered basis of homogenous elements for $M$, the basis for $\operatorname{U}(M)$ can be thought as $\{a_{i_n}\bullet(\dots\bullet(a_{i_2}\bullet a_{i_1}))|\ i_j\leq i_{j+1}$ if $a_{i_j}$ is an even element and $i_j<i_{j+1}$ if $a_{i_j}$ is an odd element$\}$.

When $\phi$ is applied to the basis of $\operatorname{U}(M)$, the image of the elements result in the set $\{(-1)^nT_{a_{i_n}}\bullet (\dots\bullet(T_{a_{i_2}}\bullet T_{a_{i_1}}))\}$, using again the lemma \ref{eqProd}; which is linearly independent in $\operatorname{U}(L(M))$. In conclusion, $\phi$ is injective.

There it follows that $\mathcal{MH}(\operatorname{U}(L(M)))\cong \operatorname{U}(M)$ through $\phi$.
\end{proof}

\bibliographystyle{plainurl}
\bibliography{ArticlesBib}

\end{document}